\numberwithin{equation}{section} 
\renewcommand{\epsilon}{\varepsilon}
\renewcommand{\phi}{\varphi}
\newcommand*{\de}{d}
\theoremstyle{plain} 
\newtheorem{theorem}{Theorem}[section]
\newtheorem{proposition}[theorem]{Proposition}
\newtheorem{lemma}[theorem]{Lemma}
\newtheorem{remark}[theorem]{Remark}
\newtheorem{assumption}[theorem]{Assumption}
\newtheorem{definition}[theorem]{Definition}
\newcommand{\leb}{L}
\newcommand{\lin}{L}
\newcommand{\ns}{\textrm{ }}
  \def\Swiech
\def\SWIECH
\title{Partial regularity of viscosity solutions for a class of Kolmogorov equations arising from mathematical finance} 
\date{} 
\author{M.\ Rosestolato\thanks{\thinspace
CMAP, \'Ecole Polytechnique, Paris, France,
e-mail: \texttt{mauro.rosestolato@polytechnique.edu}. 
This research has been partially supported by the INdAM-GNAMPA project ``Equazioni stocastiche con memoria e applicazioni'' (2014).}
 \and A.\ \Swiech\thanks{\thinspace
School of Mathematics, Georgia Institute of Technology, Atlanta, GA 30332, USA, e-mail: \texttt{swiech@math.gatech.edu}.
}
}
\begin{document}

\maketitle

\begin{abstract} 
We study value functions which are viscosity solutions of certain Kolmogorov equations. Using PDE techniques we prove that
they are $C^{1+\alpha}$ regular on special finite dimensional subspaces. The problem has origins in hedging 
 derivatives of risky assets in mathematical finance.
\end{abstract}

\vspace{10pt}
\noindent\textbf{Keywords:} viscosity solution, Kolmogorov equation, stochastic differential equation, delay problem,
 hedging problem.

\vspace{10pt} 
\noindent\textbf{AMS 2010 subject classification:} 35R15, 49L25, 60H15, 91G80.

%\linenumbers
%\tableofcontents

%\newpage
\section{Introduction}

In this paper we study partial regularity of viscosity solutions for a class of Kolmogorov equations. Our motivation comes from mathematical finance,
more precisely from  hedging a
derivative of a risky asset whose volatility as well as the claim may depend on the past history of the asset. Our Kolmogorov equations are
thus associated to stochastic delay problems. They are linear second order partial differential equations in an infinite dimensional Hilbert space
with a drift term which contains an unbounded operator and a second order term which only depends on a finite dimensional component of the Hilbert
space. Such equations are typically investigated using the notion of the so-called $B$-continuous viscosity solutions (see 
\cite{FGS,Kelome02,'Swiech1994}). We impose conditions under which our Kolmogorov equations have unique $B$-continuous viscosity solutions. However general Hamilton-Jacobi-Bellman
equations associated to stochastic delay optimal control problems which are rewritten as optimal control problems for stochastic differential equations (SDEs) in an infinite dimensional Hilbert space are difficult, not well studied yet, and few results are available in the literature.

We work directly with the value function here since its partial regularity is of interest in the hedging problem and it is well known that 
under our assumptions the value function is the unique $B$-continuous viscosity solution of the Kolmogorov equation 
(see e.g.\ \cite{FGS,Kelome02}). We thus never
use the theory of $B$-continuous viscosity solutions. Instead our strategy for proving partial regularity of the value function is the following.
We consider SDEs with smoothed out coefficients and the unbounded operator replaced by its Yosida approximations and study the corresponding
value functions with smoothed out payoff function. The new value functions are G\^ateaux differentiable and converge
on compact sets to the original value functions. They also satisfy their associated Kolmogorov equations. We then prove that their finite dimensional
sections are viscosity solutions of certain linear finite dimensional parabolic equations for which we establish $C^{1,\alpha}$
estimates. Passing to the limit with the approximations, these estimates are preserved giving $C^{1,\alpha}$ partial regularity
for finite dimensional sections of the original value function.

Partial regularity results for first order unbounded HJB equations in Hilbert spaces associated to certain deterministic optimal control problems with 
delays have been obtained in \cite{FedGolGoz2010}. The technique of \cite{FedGolGoz2010} relied on arguments using concavity of the data and 
strict convexity of the Hamiltonian and provided 
$C^1$ regularity on one-dimensional sections corresponding to the so-called ``present'' variable. Here the equations are of second order,  we rely on approximations and parabolic regularity estimates, and we obtain regularity on $m$-dimensional sections. The reader can also consult \cite{Lions88} for various global and partial regularity results for bounded
HJB equations in Hilbert spaces (see also \cite{SwiechTeixeira09}).

We refer the reader to \cite{FGS,Lions88,Lions89jfa} for the theory of viscosity solutions for bounded second order HJB equations in Hilbert 
spaces and to \cite{FGS,Kelome02,'Swiech1994} for the theory of the so-called $B$-continuous viscosity solutions for unbounded second order HJB equations in Hilbert spaces. A fully nonlinear equation with a similar separated structure to our Kolmogorov equation \eqref{eq:2015-01-16:01aaa}
but with a nonlinear unbounded operator $A$ was studied in \cite{KocanSwiech95}. For classical results about Kolmogorov equation in Hilbert
spaces we refer the reader to \cite{DaPrato2004}.

The plan of the paper is the following. In the rest of the Introduction we explain the financial motivation of our problem. Section \ref{Sec:prelim}
contains notation and various results about mild solutions of the SDE, their extensions to a bigger space with a weaker topology
related to the original unbounded operator $A$, and various approximation results. In Section \ref{Sec:visc} we study viscosity solutions
of the approximating equations, investigate finite dimensional sections of viscosity solutions, and prove their regularity.

\subsection{Motivation from finance}
\label{sec:motivationfinance}

One motivation for the present study comes from 
the classical problem 
in
financial mathematics
of  hedging 
a
derivative of some risky assets.

Let us consider a financial market composed of
two assets: a risk free asset $P$ (a bond price), and a risky asset
$R$ (a stock price).  We assume that $P$ follows the deterministic
dynamics 
$
dP_s=rP_sds
$, where $r$ is the (constant) spot interest reate, and
that $R$ follows the dynamics
\begin{equation}
  \label{eq:2015-08-10:02}
  \begin{cases}
    dR_s=rR_sds+\nu(s,R_s)dW_s\qquad s\in(t,T],\\
    R_t= x ,
  \end{cases}
\end{equation}
where $(\Omega,\mathcal{F},\mathbb{F}=\{\mathcal{F}_t\}_{t\in[0,T]},\mathbb{P})$
is a filtered probability space, $T>0$ is the maturity date,  $ x \in
\mathbb{R}$, $t\in[0,T)$,
and
 $\nu$ satisfies the usual Lipschitz assumptions.
Denote by
$R^{t, x }$ the unique strong solution of SDE
\eqref{eq:2015-08-10:02}.

Given a function $\varphi\colon \mathbb{R}\rightarrow \mathbb{R}$, 
the problem of
hedging the derivative $\varphi(R^{0,x}_T)$
consists in
 finding a
 self-financing portfolio strategy 
replicating
$\varphi(R^{0, x }_T)$, i.e.\  a couple of real-valued processes
$\{(h_s^P,h_s^R)\}_{s\in[0,T]}$ such that the portfolio
$V_s\coloneqq h_s^PP_s+h_s^RR^{0,x}_s$, composed of $h_s^P$ shares of $P$ and 
$h_s^R$ shares of $R^{0,x}$, 
satisfies
\begin{equation}
  \label{eq:2016-05-18:01}
  \begin{dcases}
    dV_s=h^P_sdP_s+h^R_sdR_s^{0,x}&s\in[0,T)\\
    V_T=\varphi(R^{0, x }_T).&
  \end{dcases}
\end{equation}
%If the data $b$,  $\nu$, $\varphi$, are smooth enough to perform the  computations below, t
The hedging problem can be solved as follows (see e.g.\ \cite[Ch.\ 8]{Bjork2003} for the financial argument and \cite[Ch.\ 7]{DaPrato2004} for the mathematical details).
We begin by introducing the function
% Under suitable conditions on $\nu$ and $\varphi$, a classical argument
% based on the assumption that the market is free of arbitrage
% possibilities (see \cite[{\color{red}Theorem 10.18}]{Bjork2003}) shows that the price $\pi$ is
% given by the formula
% $$
% \pi_s=e^{-r(T-s)}\mathbb{E}\left[\varphi(R^{t, x }_T)|\mathcal{F}_s\right]\qquad  \forall s\in[t,T].
% $$
% By the 
% Markov property, % for $0\leq t\leq s\leq T$,
% \[
% \mathbb{E}\left[\varphi(R^{t, x }_T)|\mathcal{F}_s\right]=\mathbb{E}\left[\varphi(R^{s,y}_T)\right]_{|y=R^{t, x }_s}.% \]
\begin{equation}
  \label{eq:2015-08-09:02}
u( t, x )\coloneqq e^{-r(T-t)}\mathbb{E}\left[\varphi(R^{ t, x }_T)\right]\qquad\forall (t, x )\in [0,T]\times \mathbb{R}.
\end{equation}
% Then, by applying It\^o's formula
% to $\varphi(X^{t,x}_s)$, $s\in[t,T]$, and
% by observing that, 
Notice that, by Markov property of $R$,
we have
\begin{equation}
  \label{eq:2016-05-18:02}
  u(t,x)=e^{-rh}\mathbb{E} \left[ u(t+h,R^{t,x}_{t+h}) \right]\qquad 0\leq t,h,t+h\leq T.
\end{equation}
If 
$u(t,x)$
is Fr\'echet differentiable up to order $2$ with respect to $x$, with 
derivatives which are bounded and continuous 
 jointly in $(t,x)$,
then It\^o's formula and \eqref{eq:2016-05-18:02} permit to show 
that $u$ is actually $C^{1,2}$ and solves to
the following Kolmogorov-type partial differential equation
\begin{equation}\label{2015-08-09:00}
  \begin{cases}
    u_t+rxD_xu+\frac{1}{2}\nu^2(t,x)D^2_{x}u-ru=0&\qquad (t,x)\in (0,T)\times \mathbb{R},\\
    u(T,x)=\phi(x)&\qquad x\in \mathbb{R}.
  \end{cases}
\end{equation}
By
using \eqref{2015-08-09:00} and 
 applying 
It\^o's formula to $u(s,X^{0,x}_s)$,
we find 
the following 
representation formula
% {\color{red}representation of $\pi_s$}, for 
% $ s\in[t,T]$,
\begin{equation}
  \label{eq:2015-08-10:04}
 u(s,R^{0, x }_s)=u(0, x )+
  \int_0^sru(w,R^{0, x }_w)dw
  +
  \int_0^sD_{ x }u(w,R^{0, x }_w)\nu(w,R^{0, x }_w)dW_w.
\end{equation}
Finally, by recalling the definition of $u$
% \cite[Lemma 8.4]{Bjork2003} to the adapted process
and considering formula \eqref{eq:2015-08-10:04},
we can see that 
the  portfolio strategy
\begin{equation}
  \label{eq:2015-08-09:01}
  h^P_s=\frac{u(s,R^{0, x }_s)-D_{ x }u(s,R^{0, x }_s)R^{0, x }_s}{P_s}\qquad\mbox{and}\qquad
  h^R_s=D_{ x }u(s,R^{0, x }_s)\qquad \forall s\in[0,T),
\end{equation}
solves the hedging problem.
% , since
% $$
% V_s=h_s^PP_s
% $$
Indeed, 
we have 
$$
V_s\coloneqq 
h_s^PP_s+h_s^RR^{0,x}_s= u(s,X^{0,x}_s) \qquad \forall s\in[0,T],
$$
hence in particular $    V_T=u(T,X_T^{0,x})=\varphi(R^{0,x}_T)$.
Moreover, by \eqref{eq:2015-08-10:04}, we have the self-financing condition
\begin{equation*}
  \label{eq:2016-05-18:03}
    dV_s=h^P_sdP_s+h_s^{R}dR^{0,x}_s \qquad \forall s\in[0,T).
\end{equation*}

\bigskip
There are three essential features
of the model that allow to implement the program above:
\begin{enumerate}[(1)]
\item The Markov property of $R$, which makes 
\eqref{eq:2016-05-18:02}
% \eqref{eq:2015-08-09:02}
possible.
\item The existence of $D_{ x }u$, which lets the portfolio strategy be defined by
  \eqref{eq:2015-08-09:01}
\item\label{2015-11-19:00} The availability of It\^o's formula and the fact that $u$ solves to \eqref{2015-08-09:00}, in order to derive \eqref{eq:2015-08-10:04}, hence to see that 
\eqref{eq:2015-08-09:01}
is the hedging strategy.
\end{enumerate}
% Observe that an essential feature of the model for solving the
% hedging problem is the existence of the derivative $D_xu$.

\smallskip 
Let us now consider a slightly more general risky asset
$R$, in which the volatility depends not only on the value $R_s$ of
$R$ at time $s$, but also on the entire past values of $R$. That is,
the dynamics of $R$ has the following form
\begin{equation}
  \label{eq:2015-08-10:01}
  \begin{dcases}
    dR_s=rR_sds+\nu(s,R_s,\{R_{s+s'}\}_{s'\in (-\infty,0)})dW_s&\qquad s\in(t,T]\\
    R_t=x_{0}&\\
    R_{t'}=x_1(t')&\qquad
    t'\in(-\infty,t), %\in L^2(\Omega,\mathcal{F}_0,\mathbb{P}),
  \end{dcases}
\end{equation}
where $x_0\in\mathbb{R}$ and $x_1\colon (-\infty,0)\rightarrow
\mathbb{R}$ is a given deterministic funtion belonging to
$L^2(\mathbb{R}^-,\mathbb{R})$, expressing the past history of the
stock price $R$ up to time $t$.  We also would like to face the case
in which the European claim depends itself on the history of $R$, 
i.e.\  it has the form $\varphi(R_T,\{R_t\}_{t\in(-\infty,T)})$.

We point out that model \eqref{eq:2015-08-10:01} can 
also include the case in which the path-dependency is only relative to a 
finite past window $[-d,0]$, i.e.\ $\nu$ 
is defined
as a function of the past history of $R$ only from the past date $t-d$ up to the present $t$.
To fit this case into 
\eqref{eq:2015-08-10:01}, it is sufficient to replace the coefficient $\nu$ in \eqref{eq:2015-08-10:01}
 by a $\nu'$ defined by
$$
\nu'(s,R_s,\{R_{s+s'}\}_{s'\in(-\infty,0)})
\coloneqq
\nu(s,R_s,\mathbf{1}_{[-d,0)}(\cdot)\{R_{s+s'}\}_{s'\in(-\infty,0)}).
$$
In such a case,  it is easily seen that $R$  does not depend on the tail $\mathbf{1}_{(-\infty,-d)}(\cdot)x_1$ of the initial datum.
Hence a delay model with a  finite  delay window  can be rewritten in the form \eqref{eq:2015-08-10:01}.
%Hence \eqref{eq:2015-08-10:01} fits also the models with finite delay.

A natural question is if we
can
solve the hedging problem for the delay case
by implementing the standard arguments outlined above
for the case in which $R$ is given by \eqref{eq:2015-08-10:02}.
We now see that this can be done, if we take into account
the three features mentioned above which make the machinery work.
% \eqref{eq:2015-07-28:03}.
% More precisely, we would like to write the price process $\pi_t$ of
% the claim $\varphi(R_T,\{R_t\}_{t\in(-\infty,T)})$ through a function
% $u$ which can be characterized as a solution (in some sense) of a PDE
% analogous to \eqref{2015-08-09:00},  then show that such a solution
% has enough regularity in order to define the hedging strategy
% (assuming that such a strategy is somehow analogous to
% \eqref{eq:2015-08-09:01}).  Actually, we will  see
% that it is sufficient for $u$ to be differentiable with respect to a
% special direction.

If $R^{t,(x_0,x_1)}$ solves
\eqref{eq:2015-08-10:01}, then in general it is not Markovian.
Moreover,
since both the claim $\varphi$ and 
 the function $u$, now defined by
$$
u(t,x_0,x_1)\coloneqq e^{-r(T-t)}\mathbb{E} \left[ \varphi\big(R^{t,x_0,x_1}_T, \big\{ R^{t,x_0,x_1}_{t'} \big\}_{t'\in(-\infty,T)}\big)  \right]\quad \forall (t,x_0,x_1)\in [0,T]\times \mathbb{R}\times L^2(\mathbb{R}^-,\mathbb{R}),
$$
are path-dependent, the analogous PDE \eqref{2015-08-09:00} would now be path-dependent, and it would be necessary to employ a stochastic calculus for 
path-dependent functionals of
% \rlap{\color{red}\ding{53}\ding{53}\ding{53}\ding{53}\ding{53}\ding{53}\ding{53}\ding{53}\ding{53}}{path-dependent}
It\^o processes in order to relate $u$ with the PDE, as done for the non-path-dependent case.

A classical  workaround tool to
regain Markovianity and
avoid the complications of a path-dependent stochastic calculus consists in
rephrasing the model in a functional space setting. 
What we lose
 by doing so
is that the dynamics will evolve in an infinite
dimensional space.
 We briefly recall how the rephrasing works.
We refer the reader to
 \cite{Chojnowska-Michalik1978} for the case with finite delay. 
The argument extends without difficulty to the case with infinite delay.

We first introduce the Hilbert space $H\coloneqq \mathbb{R}\times L^2(\mathbb{R}^-,\mathbb{R})$, the functions
\begin{equation}\label{eq:2015-08-10:06}
  \begin{split}
    &F\colon [0,T]\times H \rightarrow H,\ 
    (x_0,x_1)\mapsto \left(rx_0,0\right)\\
    &\Sigma\colon[0,T]\times H \rightarrow H,\ 
     (x_0,x_1)\mapsto \left(\nu(t,x_0,x_1),0\right),
\end{split}
\end{equation}
and the strongly continuous semigroup of translations on $H$,
i.e.\ the family $\hat S\coloneqq \{\hat S_t\}_{t\in\mathbb{R}^+}$ of linear continuous
operators defined by
$$
\hat S_t\colon H\to H,\ (x_0,x_1)\mapsto
(x_0,x_1(t+\cdot)\mathbf{1}_{(-\infty,-t)}(\cdot)+x_0\mathbf{1}_{[-t,0]}(\cdot)).
$$
The infinitesimal generator $\hat A$ of $\hat S$ is given by
$$
\hat A\colon D(\hat A)\rightarrow H,\ (x_0,x_1)\mapsto (0,x_1'),
$$
where
$$
D(\hat A)=\left\{(x_0,x_1)\in H\colon x_1\in
  W^{1,2}(\mathbb{R}^-)\ns\mathrm{and}\ x_0=x_1(0)\right\}.
$$
Then we consider the $H$-valued dynamics
\begin{equation}\label{eq:2015-08-10:00}
  \begin{dcases}
        {\displaystyle d \hat X_s} = \left(\hat A\hat X_s + F\left(s,\hat X_s\right)\right) d s+ \Sigma \left(s,\hat X_s\right) d W_s & s\in(t,T],\\
    \hat X_t = (x_0,x_1),
  \end{dcases}
\end{equation}
where $(x_0,x_1)\in H$, $t\in[0,T)$.  Under usual Lipschitz assumptions on
$\nu$, it can be shown that \eqref{eq:2015-08-10:00} has a unique mild
solution $\hat X^{t,(x_0,x_1)}$ (we refer to \cite{DaPrato2014} for stochastic differential equations in Hilbert spaces).  The link between \eqref{eq:2015-08-10:01}
and \eqref{eq:2015-08-10:00} is given by the following equation:
\begin{equation}
  \label{eq:2015-08-10:05}
 \mbox{for all $s\in[t,T]$},\ \hat X^{t,(x_0,x_1)}_s=\big(R_s^{t,(x_0,x_1)},
    \big\{
      R_{s'+s}^{t,(x_0,x_1)}
    \big\}_{s'\in(-\infty,0)}
  \big)\ \mbox{$\mathbb{P}$-a.s.,}
\end{equation}
where $R^{t,(x_0,x_1)}$ denotes the unique strong solution of
\eqref{eq:2015-08-10:01}. Observe that $\hat X$ is Markovian and no path-dependency appears in the coefficients $F$, $\Sigma$. This is the natural rephrasing of the dynamics of $R$ to get a Markovian setting for which the basic tools of stochastic calculus in Hilbert spaces (such as It\^o's formula) are available.

\smallskip We need an additional step to let the model studied in the paper apply to the financial problem we are considering. 
We rephrase \eqref{eq:2015-08-10:00} as an SDE in the same Hilbert space $H$, but with a maximal dissipative unbounded operator. 
To this goal, we observe that $A\coloneqq \hat A-\frac{1}{2}$ is a maximal dissipative operator generating the semigroup of contractions $S\coloneqq \{S_t\coloneqq e^{- t/2}\hat S_t\}_{t\in \mathbb{R}^+}$. Let us define $G(t,x)\coloneqq F(t,x)+\frac{x}{2}$,  $(t,x)\in[0,T]\times H$. Denote by $X^{t,(x_0,x_1)}$
 the unique mild solution of the SDE
\begin{equation}\label{2015-10-27:00}
  \begin{dcases}
 d X_s = \left( AX_s + G\left(s,X_s\right)\right) d s+ \Sigma \left(s,X_s\right) d W_s & s\in(t, T],\\
    X_t = (x_0,x_1).
  \end{dcases}
\end{equation}
It is not difficult to see that $\hat X^{t,(x_0,x_1)}=X^{t,(x_0,x_1)}$. Indeed, if $\{\hat A_\lambda\}_{\lambda>1/2}$ denote the Yosida approximations of 
$\hat A$, then the strong solution of
\begin{equation}\label{2015-10-27:01}
  \begin{dcases}
     d  X_{\lambda,s} = \big( \hat A_\lambda  X_{\lambda,s} + F\big(s, X_{\lambda,s}\big)\big) d s+ \Sigma \big(s, X_{\lambda,s}\big) d W_s & s\in (t,T],\\
     X_{\lambda,t} = (x_0,x_1),
   \end{dcases}
 \end{equation}
coincides with the strong solution $X_\lambda^{t,(x_0,x_1)}$ of
\begin{equation}\label{2015-10-27:02}
  \begin{dcases}
      X_{\lambda,s}= \left(  \left( \hat A_\lambda -\frac{1}{2} \right)   X_{\lambda,s} + G\left(s, X_{\lambda,s}\right)\right) d s+ \Sigma \left(s, X_{\lambda,s}\right) d W_s & s\in (t,T],\\
    X_{\lambda,t} = (x_0,x_1),
  \end{dcases}
\end{equation}
by the very definition and by uniqueness of strong solutions. Recalling that strong and mild solutions coincide when the linear 
 operator appearing in the drift is bounded\footnote{\label{2016-05-18:00}
% This fact comes from \cite[Proposition 6.4]{DaPrato2014} and from it's analogue when replacing the stochastic integral by the Lebesgue integral, together with uniqueness of strong solution.
% 
   This can be seen by an easy application of Ito's formula, together
 with uniqueness of mild solutions.},
%  (see \cite{DaPrato2014} {\color{red} I haven't found a reference. In \cite{DaPrato2014}, I've found only the case with the additive
%  noise.
%  I think it's just It\^o's formula applied to $f(s,X^{t,x}_s)$, where $X^{t,x}_s$ is the strong solution, $f(s,x)=\langle e^{A(T-s)}x,h\rangle$, for some fixed $h\in H$.
% Then one removes $h$ and multiplies by $e^{-A(T-s)}$. This would show that $X$ is also a mild solution, hence \emph{the} mild solution.}),
 $X_{\lambda}^{t,(x_0,x_1)}$ solves  \eqref{2015-10-27:02} in the mild sense. Now observe that $\hat A_\lambda-\frac{1}{2}$ generates the semigroup $\hat S_\lambda\coloneqq\{ \hat S_{\lambda,t}\coloneqq e^{-t/2}e^{\hat A_\lambda t}\}_{t\in \mathbb{R}^+}$. Since $e^{\hat A_\lambda t}\to \hat S_t$ strongly as $\lambda\rightarrow +\infty$, we have also $\hat S_{\lambda,t}\to S_t$ strongly. Then the mild solution $ X_{\lambda}^{t,(x_0,x_1)}$ converges to the mild solution $X^{t,(x_0,x_1)}$ as $\lambda\to +\infty$ (see e.g.\ the argument used to show Proposition \ref{prop:2015-06-20:03}-\emph{(\ref{2015-09-28:08})}).
Similarly, $X_\lambda^{t,(x_0,x_1)}$ solves
\eqref{2015-10-27:01} in the mild sense and then $X_\lambda^{t,(x_0,x_1)}\to \hat X^{t,(x_0,x_1)}$ as $\lambda\to +\infty$. We thus conclude that
$\hat X^{t,(x_0,x_1)}=X^{t,(x_0,x_1)}$ in a suitable space of processes where the well-posedness of the SDEs and the convergences above are considered.

It follows that equation \eqref{eq:2015-08-10:05} can be rewritten as:
\begin{equation}
  \label{2015-10-27:03}
 \mbox{for all $s\in[t,T]$,}\;\;\;   X^{t,(x_0,x_1)}_s=\big(R_s^{t,(x_0,x_1)},
    \big\{
      R_{s'+s}^{t,(x_0,x_1)}
    \big\}_{s'\in(-\infty,0)}
  \big)\;\;\; \mbox{$\mathbb{P}$-a.s.}.
\end{equation}
Having  \eqref{2015-10-27:03}, the
function $u$ can be written as
% $$
% \pi_s=e^{-r(T-s)}\mathbb{E} \left[ \varphi ( X^{t,(x_0,x_1)}_T ) |\mathcal{F}_s \right]=
% u(s,X^{t,(x_0,x_1)}_s),
% $$
%  where
\begin{equation}
  \label{eq:2015-08-10:07}
  u( t,x_0,x_1)=e^{-r(T-t)}\mathbb{E}\left[\varphi\big(X^{ t,(x_0,x_1)}_T\big)\right]\qquad\forall (t,(x_0,x_1))\in [0,T]\times H.
\end{equation}
% {\color{gray}On the other hand, regarding 
% the hedging
% problem,
% we are reduced to proving that a
% representation analogous to \eqref{eq:2015-08-10:04} is possible for
% the process $\big\{u(s,X^{t,(x_0,x_1)}_s)\big\}_{s\in[t,T]}$.}
Thanks to the special structure of $\Sigma$ in SDE
\eqref{2015-10-27:00}, if $u$ has enough regularity to perform the
computations, it turns out that, for $s\in[0,T]$,
\begin{equation}
  \label{2015-08-10:05}
  \begin{multlined}[c][0.85\displaywidth]
        u\big(s,X^{0,(x_0,x_1)}_s \big)=u\big(0,(x_0,x_1)\big)+
    \int_0^sru \big(w,X^{0,(x_0,x_1)}_w \big)dw\\
    +\int_0^sD_{x_0}u \big(w,X^{0,(x_0,x_1)}_w \big)\nu \big(
      w,X^{0,(x_0,x_1)}_w \big) dW_w,
    \end{multlined}
  \end{equation}
and the only derivative of $u$ appearing in the above formula is the
directional derivative $D_{x_0}u$ with respect to the variable $x_0$,
representing the ``present'', according to the rephrasing $R\leadsto X$.
Once \eqref{2015-08-10:05} is available, one can verify, as it is done for the case without delay,  that
\begin{equation*}
  h^P_s=\frac{u\big(s,X^{0, (x_0,x_1) }_s\big)-D_{ x_0 }u\big(s,X^{0, (x_0,x_1) }_s\big)X^{0, (x_0,x_1) }_{0,s}}{P_s}\;\;\;\;\mbox{and}\;\;\;\;
  h^R_s=D_{ x_0 }u\big(s,X^{0, (x_0,x_1) }_s\big)\ \  \forall s\in[0,T)
\end{equation*}
solve the hedging problem in the delay case.

\smallskip The goal of this paper is to show the regularity of the
function $u$, defined by \eqref{eq:2015-08-10:07}, with respect to the
component $x_0$, when all the data are assumed to be Lipschitz with
respect to a particular norm associated to the operator $A$.

\bigskip
{\bf Acknowledgments.} The authors are grateful to the anonymous referee for valuable comments.

\section{Preliminaries}\label{Sec:prelim}

\subsection{Notation}
Let $\left(\Omega,\mathcal F, \mathbb{P}\right)$ be a complete probability space, let $T>0$, and let 
$\mathbb{F}=\left\{\mathcal F_t\right\}_{t\in [0,T]}$ be a 
filtration on $\left(\Omega,\mathcal F,\mathbb{P}\right)$ satisfying the usual conditions.
Define $\Omega_T=\Omega\times[0,T]$.  Denote by $\mathcal P$ the
$\sigma$-algebra in $\Omega_T$ generated by the sets $A_s\times
(s,t]$, where $A_s\in\mathcal F_s$, $0\leq s<t\leq T$, and 
$A_0\times \{0\}$, where $A_0\in\mathcal F_0$.  An element of $\mathcal P$ is
called a predictable set. We denote $\mathbb{R}^-=(-\infty$,0], $\mathbb{R}^+=[0,+\infty)$.

Let $(F,|\cdot|_F)$\footnote{We use the same symbol $|\cdot|$ to denote the norm of a normed space when the space is clear 
from the context. If not, we will clarify the space of reference with a subscript.} be a real separable Banach space. We define the following spaces:
\newcounter{saveenum}
  \begin{enumerate}[(i)]
  \item For $p\geq 1$, $L_\mathcal{P}^p(F)\coloneqq L^p_\mathcal{P}(\Omega_T,F)$ is the Banach space of $F$-valued predictable processes 
  $X$ such that
$$
|X|_{L_\mathcal{P}^p(F)}\coloneqq \left(\mathbb{E}\left[\int_0^T|X_t|_F^pdt\right]\right)^{1/p}<+\infty.
$$
\item  $\mathcal{H}^p_\mathcal{P}(F)$ is the subspace of elements $X$ of $L^p_\mathcal{P}(F)$ such that
$$
|X|_{\mathcal{H}^p_\mathcal{P}(F)}\coloneqq
\sup_{t\in [0,T]}\left(\mathbb{E}\left[|X_t|_F^p\right]\right)^{1/p}<+\infty,
$$
and, for all $t'\in[0,T]$, 
$$
\lim_{t\rightarrow t'}\mathbb{E} \left[ |X_t-X_{t'}|_F^p \right] =0.
$$
$\mathcal{H}^p_\mathcal{P}(F)$, when endowed with the norm $|\cdot|_{\mathcal{H}^p_\mathcal{P}(F)}$, is a Banach space.

We will consider $\mathcal{H}^p_\mathcal{P}(F)$ also with other norms. For $\gamma>0$, define
$$
|X|_{\mathcal{H}^p_\mathcal{P}(F),\gamma}\coloneqq
\sup_{t\in [0,T]} \left( e^{-\gamma t}\left(\mathbb{E}\left[|X_t|_F^p\right]\right)^{1/p} \right) .
$$
The norms $|\cdot|_{\mathcal{H}^p_\mathcal{P}(F)}$
and
$|\cdot|_{\mathcal{H}^p_\mathcal{P}(F),\gamma}$ are equivalent.
\setcounter{saveenum}{\value{enumi}}
\end{enumerate}
Let $n\geq 0$, $k\geq 0$, $T>0$, and let $E$, $F$ be real separable Banach spaces.
%, with $X_0$ continuously embedded in $X$.
% Let $\mathcal{S}$ be  a subset of $X$.
\begin{enumerate}[(i)]
\setcounter{enumi}{\value{saveenum}}
  \item $\mathcal{G}^1_s(E,F)$ denotes the space of continuous functions $f\colon E\to F$ such that the G\^ateaux derivative 
  $\nabla f(x)$ exists for every $x\in E$, the function
$$
\nabla f\colon E\rightarrow L(E,F)
$$
is strongly continuous and
$$
\sup_{x\in E}|\nabla f(x)|_{L(E,F)}<+\infty.
$$
When $E$ is a Hilbert space and $F=\mathbb{R}$, we will identify $\nabla f $ with  an element of $E$ through the Riesz representation $E^*=E$.
 \item  $\mathcal{G}^{0,1}_s([0,T]\times E,F)$ denotes the space of continuous functions $f\colon [0,T]\times E\to F$, 
such that the G\^ateaux derivative in the $x$ variable
  $\nabla_x f(t,x)$ exists for every $x\in E$, the function
$$
\nabla_x f\colon [0,T]\times E\rightarrow L(E,F)
$$
is strongly continuous and
$$
\sup_{(t,x)\in [0,T]\times E}|\nabla_x f(t,x)|_{L(E,F)}<+\infty.
$$
\item $C_b^{1}( E,F)$  denotes the space of continuous functions $f\colon E\rightarrow F$, continuously Fr\'echet differentiable, and such that
$$
\sup_{x\in  E}|Df(x)|_{L(E,F)}<+\infty,
$$
where $Df$ denotes the Fr\'echet derivative of $f$.
\item  $C^{0,1}([0,T]\times E,F)$  denotes the space of continuous functions $f\colon [0,T]\times E\rightarrow F$, continuously Fr\'echet differentiable with respect to the second variable.
\item
  $C_b^{0,1}([0,T]\times E,F)$  denotes the space of
functions $f\in C^{0,1}([0,T]\times E,F)$
 % continuous functions $f\colon [0,T]\times E\rightarrow F$, continuously Fr\'echet differentiable with respect to the second variable, and 
 such that
$$
\sup_{(t,x)\in [0,T]\times E}|D_xf(t,x)|_{L(E,F)}<+\infty,
$$
where $D_xf$ denotes the Fr\'echet derivative of $f$ with respect to $x$.
\setcounter{saveenum}{\value{enumi}}
\end{enumerate}
When $F=\mathbb{R}$, we drop $\mathbb{R}$ and simply write $L^p_\mathcal{P}$, $\mathcal{H}^p_\mathcal{P}$, $\mathcal{G}^{1}_s(E)$, $\mathcal{G}^{0,1}_s(E),C_b^{0,1}([0,T]\times E)$, and $C_b^{0,1}([0,T]\times E)$. 
%we will use the letter $D$ to denote the Fr\'echet differential, and the symbol $\partial$ to denote the G\^ateaux differential.

Though the notation could appear to be misleading, observe that
if $f\in C^{0,1}_b([0,T]\times E,F)$ or $f\in C^1_b(E,F)$, then $f$ is not supposed to be bounded.

\vskip5pt
Let  $m>0$ be a positive integer,
and let $U$ be an open subset of $\mathbb{R}^m$. Let $a,b$ be real numbers such that $a<b$. 
Define $Q\coloneqq [a,b)\times U$ and $\partial_P Q\coloneqq [a,b]\times \partial U\cup \{b\}\times U$.
\begin{enumerate}[(i)]
\setcounter{enumi}{\value{saveenum}}
\item  For $\alpha\in (0,1)$, $C^{1+\alpha}(Q)$ denotes the space of continuous functions $f\colon Q\rightarrow \mathbb{R}$ such that $D_xf(t,x)$ exists classically for every $(t,x)\in Q$, and such that
$$
|f|_{C^{1+\alpha}(Q)}\coloneqq |f|_\infty+|D_xf|_\infty+\sup_{\substack{(t,x),(s,y)\in Q\\(t,x)\neq (s,y)}}\frac{|u(s,y)-u(t,x)-\langle D_xf(t,x),y-x\rangle_m|}{\left(|t-s|+|x-y|_m^2\right)^{(1+\alpha)/2}}<+\infty,
$$
where 
$|\cdot|_\infty$ is the supremum norm, and
$|\cdot|_m$ and $\langle\cdot,\cdot\rangle_m$ are the Euclidean norm and scalar product in $\mathbb{R}^m$ respectively.
\item  For $\alpha\in (0,1)$, $C_{\rm loc}^{1+\alpha}((0,T)\times  \mathbb{R}^m)$ denotes the space of continuous functions $f\colon (0,T)\times \mathbb{R}^m\rightarrow \mathbb{R}$ such that, for every point $(t,x)\in (0,T)\times \mathbb{R}^m$, there exists $\epsilon >0$ and $a,b\in (0,T)$, with $a< b$, such that $f\in C^{1+\alpha}([a,b)\times B(x,\epsilon))$\footnote{$B(x,\epsilon)$ denotes the open ball centered at $x$ of radius 
$\epsilon$.}.
\item  For $p\geq 1$, $W^{1,2,p}(Q)$ denotes the usual Sobolev space of functions $f\in L^p(Q)$, whose weak
partial derivatives $u_t$, $f_{x_i}$ and $f_{x_ix_j}$ belong to $\leb^p(Q)$. $W^{1,2,p}(Q)$ is equipped with the norm
$$
|f|_{W^{1,2,p}(Q)}\coloneqq \left(|f|^p_{\leb^p(Q)}+|f_t|^p_{\leb^p(Q)}+|D_xf|^p_{\leb^p(Q)}+|D^2_{x}f|^p_{\leb^p(Q)}\right)^{1/p}.
$$
\end{enumerate}
% \vskip5pt
% Let $r>0$ be a real number, $m>0$ be an integer number,  $x\in \mathbb{R}^m$ be a point. Denote by $B(x,r)$ the open Euclidean ball centered in $x$ with radius $r$. %Let $a$, $b$ be real numbers such that $0\leq a<b$. 
% For $t\in [r.T-r)$, define $Q(t,x;r)\coloneqq [t-r,t+r)\times B(x,r)$ and $\partial_P Q(t,x;r)\coloneqq \partial B(x,r)\times [t-r,t+r]\cup \{t+r\}\times B(x,r)$.
% \begin{itemize}
% \item [(vii)] For $\alpha\in (0,1)$, $C^{1+\alpha}(Q(t,x;r)$ denotes
% \end{itemize}$
%\end{definition}

\subsection{$H_B$-extensions of mild solutions of SDEs}\label{s:2015-01-20:11}

Let $m\geq 1$, and let $H_1$ be a real separable Hilbert space with scalar product $\langle\cdot,\cdot\rangle_{H_1}$.
Define $H\coloneqq\mathbb{R}^m\times H_1$.
Whenever $x$ is a point of $H$, we will denote by $x_0$ the component of $x$ in $\mathbb{R}^m$ and by $x_1$ the  component of $x$ in $H_1$.
We endow $H$ with the natural scalar product
$$
\langle (x_0,x_1),(y_0,y_1)\rangle\coloneqq \langle x_0,y_0\rangle_m+\langle x_1,y_1\rangle_{H_1}\qquad \forall  (x_0,x_1),(y_0,y_1)\in H.
$$
%where $\langle\cdot,\cdot\rangle_n$ denotes the scalar product in $\mathbb{R}^m$.

 Let
$G\colon [0,T]\times H\to H$ and $\sigma\colon [0,T]\times H\rightarrow L(\mathbb{R}^m)$.
We will consider the following assumptions on them.
 \begin{assumption}\label{ass:2015-06-04:01}
%   \begin{enumerate}[(i)]
%   \item\label{2015-09-23:00}
The functions $G$ and $\sigma$ are continuous, and there exists $M>0$ such that
$$
| G(t,x)- G(t,y)|_H
+
|\sigma(t,x)-\sigma(t,y)|_{L(\mathbb{R}^m)}\leq M|x-y|_H\qquad \forall (t,x),\,(t,y)\in [0,T]\times H.
$$
%where as norm in $L(\mathbb{R}^m)$ we choose $|\hat\sigma|_{L(\mathbb{R}^m)}\coloneqq \sqrt{\operatorname{Tr}(\hat \sigma^*\hat \sigma)}$ {\color{red}(MR: not sure I finally used the explicit formula for the norm)}.
%\item\label{2015-09-23:01}{\color{red}MR: I don't see anymore where this assumption is used.}  In addition to assumption (i), $G\in C^{0,1}_b([0,T]\times H,H)$ and $\sigma\in C^{0,1}_b([0,T]\times H,L(\mathbb{R}^m))$.
%\end{enumerate}
\end{assumption}
\vskip10pt
We associate to $\sigma$ the following function:
%F\colon [0,T]\times H\to H
%=\mathbb{R}^n\times H_1,(t,x)\mapsto (f(t,x),0)
%\qquad
$$  \Sigma\colon [0,T]\times H\rightarrow L(\mathbb{R}^m,H),$$
%,(t,x)\mapsto 
% \left[\begin{matrix}
% g(t,x)&0\\
% 0&0
% \end{matrix}\right]
defined by
\begin{equation}
  \label{eq:2015-06-20:01}
  \Sigma(t,x)y=(\sigma(t,x)y,0_1)%\qquad \mathrm{and}\qquad G(t,x)y=(g(t,x)y,0_1)
\end{equation}
for $(t,x)\in [0,T]\times H$, $y\in \mathbb{R}^m$, and where $0_1$ denotes the origin in $H_1$.

\vskip10pt
The following assumption will be standing for the remaining part of the work.

\begin{assumption}\label{ass:2015-06-17:00}
%   \begin{itemize}
%   \item [(i)] $S$ is a
% strongly continuous semigroup on $H$, with $A$ as its infinitesimal generator.
% \item [(ii)]
  $S$ is a strongly continuous semigroup of contractions, with $A$ as its infinitesimal generator. 
%\end{itemize}
\end{assumption}

 We remark that Assumption \ref{ass:2015-06-17:00} implies that $A$ is a linear densely defined maximal dissipative operator on $H$.
In the rest of the paper $A$ is an abstract operator which may be different from the operator $A$ introduced in Section \ref{sec:motivationfinance}.

%Let Assumption \ref{ass:2015-06-17:00} hold, and l
Let $W$ be a standard $m$-dimensional Brownian motion with respect  to the filtration $\mathbb{F}$.
For $t\in [0,T)$ and $x\in H$, consider the SDE
%{\par\nobreak\noindent} %Needed befor a math environment, to let numbering of lines work properly on the preceding paragraph.
\begin{equation}\label{eq:2013-02-19:ab}
  \begin{dcases}
      %\begin{sistema}{r@{\:}c@{\:}l}{1.8}
  %  {\displaystyle \de
  d X_s = \left(AX_s +  G\left(s,X_s\right)\right) \de s+ \Sigma\left(s,X_s\right) \de W_s \qquad s\in(t,T]\\
    X_t =x.
  \end{dcases}
\end{equation}
It is well known (see \cite[Ch.\ 7]{DaPrato2014}) that, under Assumption \ref{ass:2015-06-04:01}, % -(\ref{2015-09-23:00}),
for $p\geq 2$, there exists a unique mild solution in $\mathcal{H}^p_\mathcal{P}(H)$ to \eqref{eq:2013-02-19:ab}, i.e.\ a unique process $X^{t,x}\in \mathcal{H}^p_\mathcal{P}(H)$ such that
\begin{equation*}
  X^{t,x}_s=
\begin{dcases}
  x&\qquad  s\in[0,t]\\
  S_{s-t}x+\int_t^s S_{s-w} G(w,X_w^{t,x}) dw +\int_t^s S_{s-w}\Sigma(w,X_w^{t,x}) dW_w
&\qquad s\in(t,T].
\end{dcases}
\end{equation*}
Moreover, for every $t\in[0,T]$, the map 
\begin{equation}
  \label{eq:2015-06-19:00}
  H\to \mathcal H^p_\mathcal{P}(H),\quad x\mapsto X^{t,x}
\end{equation}
is continuous and Lipschitz. % in $x$.
 % uniformly in $t$.
%  Moreover, for every $\varphi\in C( H,\mathbb{R})$ with polynomial growth, the following identity holds {\color{red}don't think we need this}
% \begin{equation}
%   \label{eq:2015-06-08:00}
%   \mathbb{E}\left[\varphi(X^{s,y}_u)\right](y=X^{t,x}_s)=
%   \mathbb{E}\left[\left.\varphi(X^{t,x}_u)\right|\mathcal{F}_s\right]
% \qquad\forall  0\leq t\leq s\leq u\leq T,\ \forall x\in H.
% \end{equation}

For future reference,
we state existence and uniqueness of mild solution in the following proposition, where we also show continuity in $t$, and we introduce tools useful for later proofs.
%, differentiability with respect to the initial datum $x$, and continuity of the differential $\nabla_x X^{t,x}y$, with respect to $(t,x,y)$.

\begin{proposition}\label{2015-09-27:11}
  For any $p\geq 2$, under Assumption \ref{ass:2015-06-04:01}, there exists a unique mild solution
  $X^{t,x}\in \mathcal{H}^p_\mathcal{P}(H)$ to SDE \eqref{eq:2013-02-19:ab}, and the map
  \begin{equation}\label{2015-09-27:00}
    [0,T]\times (H,|\cdot|)\rightarrow \mathcal{H}_\mathcal{P}^p(H),\,(t,x)\mapsto X^{t,x}
  \end{equation}
  is continuous in $(t, x )$, and Lipschitz in $x$, uniformly in
  $t$. %   If
  % also Assumption \ref{ass:2015-06-04:01}-(ii) is satisfied, then the map
  % \eqref{2015-09-27:00} belongs to $\mathcal{G}_s^{0,1}([0,T]\times
  % H, \mathcal{H}_\mathcal{P}^p(H))$.% , and the map
  % % \eqref{eq:2015-06-19:02} belongs to $\mathcal{G}_s^{0,1}([0,T]\times
  % % H_B, \mathcal{H}_\mathcal{P}^p(H_B))$.
\end{proposition}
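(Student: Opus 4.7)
The plan is to construct $X^{t,x}$ by a Banach fixed-point argument on $\mathcal{H}^p_\mathcal{P}(H)$ equipped with the equivalent weighted norm $|\cdot|_{\mathcal{H}^p_\mathcal{P}(H),\gamma}$ for $\gamma$ sufficiently large, and then to derive the two continuity statements from uniform estimates on the fixed-point iteration. For fixed $(t,x)\in[0,T]\times H$, define $\Phi_{t,x}\colon \mathcal{H}^p_\mathcal{P}(H)\to\mathcal{H}^p_\mathcal{P}(H)$ by
$$
\Phi_{t,x}(Y)_s = x\,\mathbf{1}_{[0,t)}(s) + \Bigl(S_{s-t}x+\int_t^s S_{s-w}G(w,Y_w)\,dw+\int_t^s S_{s-w}\Sigma(w,Y_w)\,dW_w\Bigr)\mathbf{1}_{[t,T]}(s),
$$
so that mild solutions are exactly fixed points of $\Phi_{t,x}$. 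That $\Phi_{t,x}(Y)\in\mathcal{H}^p_\mathcal{P}(H)$ (including mean $p$-th power continuity in $s$) follows from the Lipschitz and growth bounds in Assumption \ref{ass:2015-06-04:01}, the Burkholder--Davis--Gundy inequality, the contractivity $|S_r|_{L(H)}\le 1$ from Assumption \ref{ass:2015-06-17:00}, and the strong continuity of $S$.

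For the contraction property I would exploit the weighted norm. Given $Y,Y'\in\mathcal{H}^p_\mathcal{P}(H)$, the Lipschitz bound on $G$ and $\sigma$ together with $|S_r|_{L(H)}\le 1$ and Burkholder--Davis--Gundy yield, after multiplying by $e^{-\gamma s}$ and taking the supremum over $s\in[0,T]$, an estimate of the form
$$
|\Phi_{t,x}(Y)-\Phi_{t,x}(Y')|_{\mathcal{H}^p_\mathcal{P}(H),\gamma}\le C(p,M,T)\,\gamma^{-\beta}\,|Y-Y'|_{\mathcal{H}^p_\mathcal{P}(H),\gamma}
$$
with some $\beta>0$. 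Choosing $\gamma$ large enough produces a strict contraction, whence the unique fixed point $X^{t,x}$. Lipschitz continuity in $x$, uniformly in $t$, is an immediate consequence: since $\Phi_{t,x}(Y)-\Phi_{t,y}(Y)$ equals $S_{\cdot-t}(x-y)$ on $[t,T]$ and $x-y$ on $[0,t)$, contractivity of $S$ gives $|\Phi_{t,x}(Y)-\Phi_{t,y}(Y)|_{\mathcal{H}^p_\mathcal{P}(H),\gamma}\le|x-y|_H$, and a standard rearrangement of the fixed-point inequality yields $|X^{t,x}-X^{t,y}|_{\mathcal{H}^p_\mathcal{P}(H)}\le C|x-y|_H$ with $C$ independent of $t$.

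The main obstacle is joint continuity in $(t,x)$. Given $(t_n,x_n)\to(t,x)$, I would split
$$
X^{t_n,x_n}-X^{t,x}=\bigl(X^{t_n,x_n}-X^{t_n,x}\bigr)+\bigl(X^{t_n,x}-X^{t,x}\bigr);
$$
the first term is handled by the just-proven Lipschitz bound. For the second, several effects interact: the semigroup argument $S_{s-t_n}x$ differs from $S_{s-t}x$, the integration intervals $[t_n,s]$ and $[t,s]$ differ, and the two processes agree with the constant $x$ on $[0,\min(t,t_n))$ with a possible jump at $s=\min(t,t_n)$. I would estimate the difference of the free terms $S_{\cdot-t_n}x-S_{\cdot-t}x$ using strong continuity of $S$ and dominated convergence (in $\mathcal{H}^p_\mathcal{P}(H)$), control the Bochner and stochastic integrals over the symmetric difference $[t_n,s]\triangle[t,s]$ by the a priori bound on $|X^{t,x}|_{\mathcal{H}^p_\mathcal{P}(H)}$ together with continuity of the integrand in $s$, and then close the estimate either by the contraction in the $\gamma$-norm or by a Gr\"onwall argument applied to $|X^{t_n,x}-X^{t,x}|_{\mathcal{H}^p_\mathcal{P}(H)}$. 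The delicate point is that strong continuity of $S$ alone does not give uniformity in the argument $x$, so I would reduce to a dense set of smooth $x$ via the Lipschitz-in-$x$ estimate already obtained.
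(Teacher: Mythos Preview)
Your proposal is correct and follows essentially the same route as the paper: define the fixed-point map $\Phi$, choose $\gamma$ large so that $\Phi(t;x,\cdot)$ is a $1/2$-contraction on $(\mathcal{H}^p_\mathcal{P}(H),|\cdot|_\gamma)$ uniformly in $(t,x)$, read off Lipschitz-in-$x$ from $|\Phi(t;x,Z)-\Phi(t;x',Z)|_\gamma\le|x-x'|_H$, and combine this with continuity in $t$ for fixed $x$.

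Two minor remarks. First, for the $t$-continuity the paper takes a slightly cleaner path: instead of your direct estimate of $X^{t_n,x}-X^{t,x}$, it shows $\Phi(t_n;x,Z)\to\Phi(t;x,Z)$ in $\mathcal{H}^p_\mathcal{P}(H)$ for each fixed $(x,Z)$ (by strong continuity of $S$, BDG, and dominated convergence) and then invokes an abstract parametric fixed-point continuity result (\cite[Theorem~7.1.5]{DaPrato2004}) to conclude $X^{t_n,x}\to X^{t,x}$. Second, your final worry is unnecessary: in your decomposition the term $X^{t_n,x}-X^{t,x}$ has $x$ \emph{fixed}, so strong continuity of $S$ at the single point $x$ is all that is needed, and the reduction to a dense set of smooth $x$ is superfluous.
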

\begin{proof}
Since the arguments are standard, we just give a sketch of proof.
%Let Assumption \ref{ass:2015-06-04:01} hold. We show that 
%\eqref{2015-09-27:00} is continuous.
Let $t\in[0,T]$.
Define the map
$$
\Phi(t;\cdot,\cdot)\colon H\times \mathcal{H}^p_\mathcal{P}(H)\rightarrow \mathcal{H}^p_\mathcal{P}(H)
$$
by
$$
\Phi(t;x,Z)_s\coloneqq 
  \begin{dcases}
  x&s\in[0,t)\\
  S_{s-t}x+\int_t^sS_{s-w}
G(w,Z_w) dw +\int_t^s S_{s-w}\Sigma(w,Z_w) dW_w&s\in[t,T].
\end{dcases}
$$
Let $\gamma>0$. By Assumption \ref{ass:2015-06-04:01}, we have
\begin{gather}
\sup_{t\in[0,T]}e^{-\gamma pt}  \mathbb{E} \left[ |G(t,Z_t)-G(t,Z'_t)|^p \right]
 \leq 
M^p|Z-Z'|_{\mathcal{H}^p_\mathcal{P}(H),\gamma}^p \ \ \ \forall Z,Z'\in \mathcal{H}^p_\mathcal{P}(H)
\label{2015-09-27:02}\\
\sup_{t\in[0,T]}e^{-\gamma pt}  \mathbb{E} \left[ |\sigma(t,Z_t)-\sigma(t,Z'_t)|^p_{L(\mathbb{R}^m)} \right]
 \leq 
M^p|Z-Z'|_{\mathcal{H}^p_\mathcal{P}(H),\gamma}^p \ \ \ \forall Z,Z'\in \mathcal{H}^p_\mathcal{P}(H).\label{2015-09-27:04}
\end{gather}
By \eqref{2015-09-27:02}, \eqref{2015-09-27:04}, the linearity of $\Phi(t;x,Z)$ in $x$, and
 \cite[Ch.\ 7, Proposition~7.3.1]{DaPrato2004},
there exists $\gamma>0$, depending only on 
$p$, $T$, $M$,
 such that 
 \begin{equation}
   \label{eq:2015-09-27:05}
   \sup_{(t,x)\in[0,T]\times H}|\Phi(t;x,Z)-\Phi(t;x,Z')|_{\mathcal{H}^p_\mathcal{P}(H),\gamma}\leq \frac{1}{2}
|Z-Z'|_{\mathcal{H}^p_\mathcal{P}(H),\gamma} \ \ \ \forall Z,Z'\in \mathcal{H}^p_\mathcal{P}(H).
\end{equation}
This shows that, for every $(t,x)\in[0,T]\times H$, there exists a unique fixed point $X^{t,x}\in \mathcal{H}^p_\mathcal{P}(H)$ of $\Phi(t;x,\cdot)$. Such a fixed point is the mild solution of \eqref{eq:2013-02-19:ab}. 

The continuity of \eqref{2015-09-27:00} is also standard. We sketch a slightly different argument.
Let $\{t_n\}_{n\in \mathbb{N}}$ be a sequence converging to $t$ in $[0,T]$.
By standard estimates on the integrals defining $\Phi$  (for the stochastic integral using Burkholder-Davis-Gundy's inequality), by sublinear growth of $G(t,x)$ and $\sigma(t,x)$ in $x$ uniformly in $t$, and by Lebesgue's dominated convergence theorem,
we have
\begin{equation}
  \label{eq:2015-09-27:07}
  \lim_{n\rightarrow +\infty}\Phi(t_n;x,Z)=\Phi(t;x,Z)\ \mathrm{in}\ \mathcal{H}_\mathcal{P}^p(H),\ \forall (x,Z)\in H\times \mathcal{H}^p_\mathcal{P}(H).
\end{equation}
Then, by \eqref{eq:2015-09-27:07},  \eqref{eq:2015-09-27:05}, and \cite[Theorem 7.1.5]{DaPrato2004}, we have 
\begin{equation}
  \label{eq:2015-09-27:10}
  \lim_{n\rightarrow +\infty} X^{t_n,x}=X^{t,x}\ \ \ \mathrm{in}\ \mathcal{H}_\mathcal{P}^p(H),\ \forall x\in H.
\end{equation}
This shows the continuity in $t$ of $X^{t,x}$.
We notice that  % {\color{red}SINCE THIS IS SEMIGROUP OF CONTRACTIONS,
% I CHANGED $M'$ IN (\ref{2015-09-27:08}) TO $1$ AND THEN $2M'$ to $2$ IN (\ref{eq:2015-09-27:09}). PLEASE CHECK THAT IT IS OK SINCE I DON'T HAVE THE PAPER \cite{Cosso}.}
 \begin{equation}
   \label{2015-09-27:08}
   \sup_{(t,Z)\in[0,T]\times \mathcal{H}^p_\mathcal{P}(H)}|\Phi(t;x,Z)-\Phi(t;x',Z)|_{\mathcal{H}^p_\mathcal{P}(H),\gamma}\leq
|x-x'|_{H} \ \ \ \forall x,x'\in H.
\end{equation}
By applying \cite[inequality ($***$) on p.\ 13]{Granas2003},
% \cite[Lemma 3.5-(ii)]{Cosso},
 we obtain
\begin{equation}
  \label{eq:2015-09-27:09}
  \sup_{t\in[0,T]}|X^{t,x}-X^{t,x'}|_{\mathcal{H}^p_\mathcal{P}(H),\gamma}\leq 2|x-x'|_H\ \ \ \forall x,x'\in H.
\end{equation}
By \eqref{eq:2015-09-27:10} and by \eqref{eq:2015-09-27:09} we conclude that the map
$$
[0,T]\times H\rightarrow \mathcal{H}^p_\mathcal{P}(H),\ (t,x) \mapsto X^{t,x}
$$
is continuous and Lipschitz continuous in $x$ uniformly in $t$.
% Let Assumption \ref{ass:2015-06-04:01}-(ii) hold. We show that 
% \eqref{2015-09-27:00} belongs to $\mathcal{G}^{0,1}_s([0,T]\times H,\mathcal{H}^p_\mathcal{P}(H))$.
\end{proof}

% \begin{remark}
%   We notice that Proposition \ref{2015-09-27:11} claims that the mild solution $X^{t,x}$ is unique in the space $\mathcal{H}^p_\mathcal{P}(H)$. 
% \end{remark}

We are going to endow $H$ with a weaker
norm, and give conditions such that the above continuity in $(t,x)$
  % and differentiability 
  of $X^{t,x}$ %with respect to $(t,x)$
 extends to 
the new norm. We will also make assumptions which will guarantee the G\^ateaux differentiability of the mild solution with respect to the initial datum $x$ in the space with the weaker norm and the strong continuity of the the G\^ateaux derivative.

\vskip10pt
Let $R\colon D(R)\rightarrow H$ be a densely defined linear operator such that $R\colon  D(R)\to H$ has inverse
$R^{-1}\in \lin(H)$.  Then $B=\left(R^*\right)^{-1}R^{-1}\in \lin(H)$
is selfadjoint and positive. For $x\in H$, define
\begin{equation}
  \label{eq:2016-05-19:00}
  | x|^2_B=\langle Bx,x\rangle=\left| R^{-1}x\right|^2_H
\end{equation}
Such norms have been introduced in the context of the so-called $B$-continuous viscosity solutions of HJB equations in
\cite{Crandall1990237,CRANDALL1991417}
 and used in many later works on HJB equations in infinite dimensional spaces (see \cite[Ch.\ 3]{FGS} for more on this).
The space $H$ endowed with the norm $| \cdot |_B$ is
pre-Hilbert, since $| \cdot |_B $ is inherited by the scalar
product $\langle x,y\rangle_B=\langle B^{1/2}x,B^{1/2}y\rangle$, where
$B^{1/2}$ is the unique positive self-adjoint continuous linear
operator such that $B=B^{1/2}B^{1/2}$.  Denote by $H_B$ the completion of
the pre-Hilbert space $\left(H,| \cdot |_B\right)$. With some abuse
of notation, we also denote by $| \cdot|_B$ the extension of 
$|\cdot |_B$ to $H_B$.  

By definition of $|\cdot|_B$, $R\colon  (D(R),|\cdot|_H)\rightarrow  (H,|\cdot|_B)$ is a full-range
isometry.
%when the codomain $H$ is normed by $| \cdot |_B$.
This implies the following facts:
\begin{enumerate}[(1)]
\setlength\itemsep{0.05em}
\item 
there exists a unique extension $\widetilde
R\colon H\to H_B$;
\item $\widetilde R$ and $\widetilde R^{-1}$ are
isometries;
\item $\widetilde R^{-1}=\widetilde{R^{-1}}$, where
$\widetilde{R^{-1}}\colon H_B\to H$ is the unique continuous extension of
$R^{-1}$.
\end{enumerate}
Denote by $\overline R$ the operator $\widetilde R$
considered as an operator $H_B\supset H= D(\overline R)\rightarrow H_B$.  The above facts imply that $\overline R$ is a
densely-defined full-range closed linear operator in $H_B$, and that $D(R)$ is a core for $\overline R$.

\medskip
We will need the following proposition.

\begin{proposition}\label{2015-09-28:06}
  Let $R\colon D(R)\subset H\rightarrow H$ be a densely defined linear operator such that $R^{-1}\in L(H)$. Let $H_B$ be the Hilbert space defined above as the completion of $H$ with respect to the norm $|\cdot|_B$ given by \eqref{eq:2016-05-19:00}.
  %by completing $H$ with respect to the norm induced by the scalar product defined by
%$$
%\langle x,y\rangle_B\coloneqq \langle (R^*)^{-1}R^{-1}x,y\rangle \ \ \ \ \forall x,y\in H.
%$$
  \begin{enumerate}[(i)]
  \item\label{2016-05-19:01} Suppose that
\begin{equation}
  \label{eq:2015-09-28:04}
  S_tR\subset RS_t\qquad \forall  t\in \mathbb{R}^+.
\end{equation}
Then, for every $t\in \mathbb{R}^+$, there exists a unique continuous extension $\overline S_t$ of $S_t$ to $H_B$, the family $\overline S\coloneqq\{\overline S_t\}_{t\in\mathbb{R}^+}$ is a strongly continuous semigroup of contractions on $H_B$, and
\begin{gather}
\overline{S}_t \overline{R}\subset \overline{R}\overline{S}_t\qquad \forall t\in\mathbb{R}^+
 \label{eq:2016-05-19:06},  \\
%D(\overline A)=\overline R (D(A)),\ 
\overline A =\overline R A\overline R^{-1},
\label{2016-05-19:07}
\end{gather}
where $\overline A$ is the infinitesimal generator of $\overline S$.

 %The inverse $\overline A^{-1}$ of the infinitesimal generator $\overline A$ of $\overline S$ is the unique continuous extension $\overline{A^{-1}}$ to $H_B$ of $A^{-1}$.
\item\label{2016-05-19:02} 
% In addition to 
% \eqref{eq:2015-09-28:04}, suppose that
% {\color{red}questa condizione implica quella sopra?}
Suppose that
% \begin{equation}
%   \label{eq:2015-09-28:05}
% {\color{red}label da verificare: eq:2015-09-28:05} 
% \end{equation}
  \begin{gather}
  AR=RA,   \label{2016-05-19:03}\\ %\qquad\text{and}\qquad D(A)\subset D(R).
 D(A)\subset D(R).\label{2016-05-19:04}
\end{gather}
Then
\eqref{eq:2015-09-28:04} is satisfied
and
 the Yosida approximations $\{\overline A_n\}_{n\geq 1}$ of the infinitesimal generator $\overline A$ of $\overline S$ are given by the unique continuous extensions to $H_B$ of the Yosida approximations $\{A_n\}_{n\geq 1}$ of $A$, i.e.\ 
$$
\overline A_n=\overline{A_n}\qquad \forall n\geq 1.
$$
\end{enumerate}
\end{proposition}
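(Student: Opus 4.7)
The plan for (i) is to exploit the fact that $\widetilde R$ realizes $(H,|\cdot|_H)$ as isometrically isomorphic to $(H_B,|\cdot|_B)$, with inverse $\widetilde{R^{-1}}=\overline R^{-1}$, so that the construction in $H_B$ reduces to transferring the original semigroup through this isometry. I will first extract from the hypothesis $S_tR\subset RS_t$ the symmetric identity $S_tR^{-1}=R^{-1}S_t$ on $H$ (writing $y=Rx$ with $x\in D(R)$ and invoking injectivity of $R$). Then $|S_tx|_B=|R^{-1}S_tx|_H=|S_tR^{-1}x|_H\le |x|_B$ shows $S_t$ is a $|\cdot|_B$-contraction on $H$, so it extends uniquely to $\overline S_t\in L(H_B)$ by density of $H$. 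The semigroup law extends by density, and strong continuity bootstraps from $H$ (where $|\overline S_tx-x|_B=|S_tR^{-1}x-R^{-1}x|_H\to 0$) to $H_B$ via an $\varepsilon/3$ argument using uniform contractivity.

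For the rest of (i), the commutation $\overline S_t\overline R\subset\overline R\overline S_t$ would be verified first on the core $D(R)$, where it reduces to $RS_tx=S_tRx$, and then extended to $x\in H=D(\overline R)$ by approximation $x_n\in D(R)\to x$ in $|\cdot|_H$, using that $\widetilde R$ is continuous from $(H,|\cdot|_H)$ to $(H_B,|\cdot|_B)$. For the generator identity $\overline A=\overline R A\overline R^{-1}$ the cleanest route is first to establish the operator equality $\overline S_t=\widetilde R\,S_t\,\widetilde{R^{-1}}$ (which on $H$ amounts to $S_tR^{-1}=R^{-1}S_t$ and extends by density). Writing $(\overline S_ty-y)/t=\widetilde R\big((S_t\widetilde{R^{-1}}y-\widetilde{R^{-1}}y)/t\big)$ and using that $\widetilde R$ and $\widetilde{R^{-1}}$ are isometries between the two norms, one reads off that $y\in D(\overline A)\iff\widetilde{R^{-1}}y\in D(A)$ and $\overline Ay=\widetilde R A\widetilde{R^{-1}}y=\overline R A\overline R^{-1}y$.

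For (ii), I first plan to reduce the semigroup commutation to commutation of Yosida approximations. Interpreting $AR=RA$ as the statement that $R$ leaves $D(A)$ invariant with $ARx=RAx$ on $D(A)$, the identity $(\lambda-A)R R(\lambda,A)x=Rx$ for $x\in D(R)$ (computed directly using $R(\lambda,A)x\in D(A)\subset D(R)$ and the commutation on $D(A)$) yields $R(\lambda,A)R=R R(\lambda,A)$ on $D(R)$. Hence each bounded Yosida approximation $A_n$ satisfies $A_nRx=RA_nx$ on $D(R)$, and by induction $A_n^kRx=RA_n^kx$. The closedness of $R$ (which follows from $R^{-1}\in L(H)$) lets me push this commutation through the norm-convergent exponential series, giving $Re^{tA_n}x=e^{tA_n}Rx$ on $D(R)$, and through the strong limit $e^{tA_n}\to S_t$, yielding $S_tR\subset RS_t$ as desired.

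Finally, for $\overline A_n=\overline{A_n}$, I observe that $A_nR=RA_n$ on $D(R)$ rewrites as $A_nR^{-1}=R^{-1}A_n$ on $H$, whence $|A_nx|_B=|R^{-1}A_nx|_H=|A_nR^{-1}x|_H\le|A_n|_{L(H)}|x|_B$, so $A_n$ admits a continuous extension $\overline{A_n}\in L(H_B)$. From part (i), $\overline A=\overline R A\overline R^{-1}$ yields $(n-\overline A)^{-1}=\overline R(n-A)^{-1}\overline R^{-1}$ (solve $(n-\overline A)y=z$ via the ansatz $u=\overline R^{-1}y\in D(A)$), whence $\overline A_n=\overline R A_n\overline R^{-1}$. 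Evaluating on $x\in H$ and using $\overline R^{-1}|_H=R^{-1}$ together with $A_nR^{-1}=R^{-1}A_n$ collapses the right side to $A_nx=\overline{A_n}x$, and density of $H$ in $H_B$ concludes. The main technical obstacle throughout is the careful domain tracking for the unbounded operators, together with the systematic use of closedness of $R$ to pass identities through limits.
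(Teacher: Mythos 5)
Your part (i) and most of part (ii) track the paper's own proof in all essentials: the extension of $S_t$ via $S_tR^{-1}=R^{-1}S_t$ and the bound $|S_tx|_B\le|x|_B$, the commutation \eqref{eq:2016-05-19:06} by approximating $x\in H$ from $D(R)$ (the paper phrases this as a core-plus-closedness argument for $\overline R$, your continuity-of-$\widetilde R$ version is equivalent), the generator transfer $\overline A=\overline RA\overline R^{-1}$ (your explicit conjugation $\overline S_t=\widetilde R\,S_t\,\widetilde{R^{-1}}$ is the same computation the paper performs difference-quotient-wise), the resolvent commutation, the exponential-series argument through the closedness of $R$, the strong limit $e^{tA_n}\to S_t$, and the identification $\overline A_n=\overline RA_n\overline R^{-1}=\overline{A_n}$ on $H$. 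However, there is one genuinely flawed step in part (ii): you propose ``interpreting $AR=RA$ as the statement that $R$ leaves $D(A)$ invariant with $ARx=RAx$ on $D(A)$''. That is strictly stronger than the operator equality \eqref{2016-05-19:03}, which asserts only $D(AR)=D(RA)$ together with equality of values there, and the stronger reading is \emph{false} in the very application the proposition is built for. With $R=A_\lambda=A-\lambda$ (the choice \eqref{eq:Blambda1}), one has $Ry\in D(A)$ iff $y\in D(A^2)$, so invariance of $D(A)$ under $R$ would give $D(A)=D(A^2)$; since $(\lambda-A)$ maps $D(A^2)$ onto $D(A)$ and $D(A)$ onto $H$, this would force $D(A)=H$ and hence $A$ bounded. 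In this example $AR=RA$ does hold, but the common domain is $D(A^2)\subsetneq D(A)$, and likewise ``the commutation on $D(A)$'' is simply not available on all of $D(A)$: for $y\in D(A)$ the expression $RAy$ requires $Ay\in D(R)$, which is not guaranteed.

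The gap is localized and repairable, and the repair is exactly the paper's observation \eqref{eq:2015-08-13:04}: the only vectors at which your resolvent computation needs the commutation are $y=(n-A)^{-1}x$ with $x\in D(R)$, and for these one checks directly that $Ay=n(n-A)^{-1}x-x\in D(A)+D(R)\subset D(R)$ by \eqref{2016-05-19:04}, whence $y\in D(RA)=D(AR)$, i.e.\ $Ry\in D(A)$ and $ARy=RAy$. With this one-line substitution in place of the invariance claim, your identity $(n-A)R(n-A)^{-1}x=Rx$, hence $(n-A)^{-1}R\subset R(n-A)^{-1}$ (the paper's \eqref{eq:2015-08-13:02}), and everything downstream of it go through verbatim and coincide with the paper's argument. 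Two of your variations are worth keeping as mild simplifications: the direct bound $|A_nx|_B=|A_nR^{-1}x|_H\le|A_n|_{L(H)}|x|_B$ produces the extension $\overline{A_n}$ without first extending the semigroups $e^{tA_n}$ as the paper does, and the resolvent conjugation $(n-\overline A)^{-1}=\overline R(n-A)^{-1}\overline R^{-1}$ (legitimate since $(0,+\infty)\subset\rho(\overline A)$, $\overline S$ being a contraction semigroup by part (i)) gives $\overline A_n=\overline RA_n\overline R^{-1}$ a bit more cleanly than the paper's route through \eqref{eq:2015-09-23:02}.
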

\begin{proof}
\emph{(\ref{2016-05-19:01})}
Suppose that
\eqref{eq:2015-09-28:04} holds true. 
% \begin{equation}
%   \label{eq:2015-01-20:05}
% {\color{red}sostiturla con la citaiozne del claim}
% S_tR\subset RS_t\qquad \forall t\in\mathbb{R}^+.  
% \end{equation}
Observe that \eqref{eq:2015-09-28:04} implies
\begin{equation}
  \label{eq:2015-06-04:03}
AR\subset RA.  
\end{equation}
%Let $M$ be a bound for $| S_t| $, for every $t$ on $[0,T]$.  
Since
$R^{-1}S_t=S_tR^{-1}$, % commutes with $S$,
we have
$$
| S_tx|_B=| R^{-1}S_tx|_H=| S_tR^{-1}x|_H\leq
| R^{-1}x|_H=
| x|_B\qquad \forall  t\in \mathbb{R}^+,\, x\in H.
$$
We can then extend each $S_t$ to an operator $\overline S_t\in
\lin(H_B)$ with the operator norm less than or equal to $1$.
By density of $H$ in $H_B$,
 it is clear that the family $\{\overline S_t\}_{t\in \mathbb{R}^+}$
is a semigroup of contractions.
%satifies the semigroup condition $\overline S_{s+t}=\overline S_s\overline S_t$, for $s,t\in\mathbb{R}^+$. 
% Moreover, 
% $$
% | \overline S_t|_{\lin(H_B)}=\sup_{\substack{x\in H\\|
%     x|_B=1}}| S_tx|_B\leq M
% $$
Moreover, for $x\in H$,
%{\par\nobreak\noindent} %Needed befor a math environment, to let numbering of lines work properly on the preceding paragraph.
$$
\lim_{t\to 0^+}| \overline S_tx-x|_B= \lim_{t\rightarrow 0^+}
| R^{-1}(S_tx-x)|_H=
\lim_{t\rightarrow 0^+}| S_tR^{-1}x-R^{-1}x|_H=0.
$$
The above observations imply that the family $\{\overline S_t\}_{t\in \mathbb{R}^+}$ is uniformly  bounded %on
% $[0,T]$
and strongly continuous on a dense subspace of $H_B$. 
Thus,
by 
\cite[Proposition 5.3]{Engel2000},
$\overline S$ is a strongly continuous semigroup on $H_B$.

\smallskip

% , by \eqref{eq:2015-09-28:04}, w
We 
now prove
\eqref{eq:2016-05-19:06}.
% \begin{equation}\label{eq:2015-01-20:06}
%  \color{red}DATOGLIERE \overline S_t\overline R\subset \overline R\,\overline S_t\qquad \forall t\in \mathbb{R}^+.
% \end{equation}
Let $(x,\overline Rx)\in \Gamma(\overline R)$, where $\Gamma(\overline R)$ is the graph of $\overline R$. 
We noticed that $D(R)$ is a core for $\overline R$.
Then we can choose a sequence $\{(x_n,Rx_n)\}_{n\in \mathbb{N}}\in \Gamma(R)$ such that $(x_n,Rx_n)\rightarrow ( x,\overline Rx)$ in $H_B\times H_B$. 
Hence, using \eqref{eq:2015-09-28:04}, we can write
$$
\overline S_t\overline R x=\lim_{n\rightarrow +\infty} \overline S_tRx_n=
\lim_{n\rightarrow +\infty}  S_tRx_n=
\lim_{n\rightarrow +\infty}  RS_tx_n=
\lim_{n\rightarrow +\infty}  \overline R\,\overline S_tx_n,
$$
where all the limits are considered in $H_B$. This means that $\{\overline R\,\overline S_tx_n\}_{n\in \mathbb{N}}$ is convergent in $H_B$. We recall that $\overline R$ is closed in $H_B$ and we observe that $\overline S_tx_n\rightarrow \overline S_t x$ in $H_B$ by continuity. Thus we conclude that
$\overline R\,\overline S_tx_n\rightarrow \overline R\,\overline S_t x $ in $H_B$. This proves
\eqref{eq:2016-05-19:06}.
% \eqref{eq:2015-01-20:06}.

\smallskip
Now let $\overline A$ be the generator of  the semigroup $\{\overline S_t\colon H_B\rightarrow H_B\}_{t\in \mathbb{R}^+}$. Obviously
$\overline A$ is an extension of $A$, i.e.\ $\overline Ax=Ax$ for $x\in D(A)$. We will show that 
$D(\overline A)=\overline R(D(A))$. Using
\eqref{eq:2016-05-19:06}
% \eqref{eq:2015-01-20:06} 
we have for $x\in H_B$, 
\[
    \lim_{t\to 0^+} \frac{\overline S_t-I}{t}x=
    \lim_{t\to 0^+} \frac{\overline S_t-I}{t}\overline R\,\overline R^{-1}x
  = (\mathrm{by}\ \eqref{eq:2016-05-19:06}% \eqref{eq:2015-01-20:06}
  )=
\lim_{t\to 0^+} \overline R\frac{\overline S_t-I}{t}\overline R^{-1}x
  =\lim_{t\to 0^+} \overline R\frac{ S_t-I}{t}\overline R^{-1}x.
  \]
The last limit exists in $H_B$ if and only if the limit
\begin{equation*}
    \lim_{t\to 0^+} \frac{ S_t-I}{t}\overline R^{-1}x
\end{equation*}
exists in $H$. Therefore we conclude that 
\begin{equation}
  \label{eq:2015-06-04:04}
D(\overline A)=\overline R(D(A))\qquad \mathrm{and}\qquad\overline A x=\overline RA\overline R^{-1}x \quad \forall x\in D(\overline A),
\end{equation}
which can be written as \eqref{2016-05-19:07}.

% Now {\color{red}TOGLIERE}
% \begin{equation}
%   \label{eq:2015-08-13:00}
%   \begin{dcases}
%     (a)\quad AR=RA\\
%     (b)\quad D(A)\subset D(R).
%   \end{dcases}
% \end{equation}

\smallskip
\emph{(\ref{2016-05-19:02})}
Let $\{A_{n}\}_{n\geq 1}$ be the Yosida approximations of $A$. 
 We begin by showing that 
\begin{equation}
  \label{eq:2015-08-13:02}
  (n-A)^{-1}R\subset R(n-A)^{-1}\ \ \ \ \forall n\geq 1.
\end{equation}
By 
%{\color{red}I've made the following ref precise:} 
\eqref{2016-05-19:04},
% \eqref{eq:2015-08-13:00},
it follows that
$$
D((n-A)^{-1}R)= D(R)\subset H=D(R(n-A)^{-1}).
$$
By 
\eqref{2016-05-19:04}, we have, for $x\in D(R)$,
%\eqref{eq:2015-08-13:00}($b$),
\begin{equation}
  \label{eq:2015-08-13:04}
  A(n-A)^{-1}x=n(n-A)^{-1}x-x\subset D(A)+D(R)\subset D(R),
\end{equation}
hence $(n-A)^{-1}x\in D(RA)$.
Then, by using
\eqref{2016-05-19:03}, we can write, 
for $x\in D(R)$,
$$
(n-A)^{-1}Rx=
(n-A)^{-1}R(n-A)(n-A)^{-1}x=(n-A)^{-1}(n-A)R(n-A)^{-1}x=R(n-A)^{-1}x.
$$
This shows \eqref{eq:2015-08-13:02}.

\smallskip
We now claim that 
\begin{equation}
  \label{eq:2016-05-19:05}
  e^{tA_n}R\subset Re^{tA_n},
\end{equation}
where $e^{tA_n}$ is the semigroup generated by $A_n$.
By \eqref{eq:2015-08-13:02}, we have
\begin{equation*}%\label{2015-08-13:03}
    A_nRx=n^2(n-A)^{-1}Rx-nRx=
n^2R(n-A)^{-1}x-nRx=
RA_nx \qquad
\forall x\in D(R),
\end{equation*}
that is
\begin{equation}
  \label{eq:2015-08-13:01}
  A_nR\subset RA_n.
\end{equation}
Let $x\in D(R)$.  By 
\eqref{eq:2015-08-13:04}
and 
\eqref{eq:2015-08-13:01},
\begin{equation}
  \label{eq:2015-08-13:05}
  A_n^kRx=RA_n^kx\qquad \forall k\in \mathbb{N}.
\end{equation}
For $t\in \mathbb{R}^+$, define
$$
y_m\coloneqq \sum_{k=0}^m\frac{t^k}{k!}A_n^kx.
$$
By \eqref{eq:2015-08-13:04}, $y_m\in D(R)$.
 Moreover, $\lim_{m\rightarrow +\infty}y_m=e^{tA_n}x$ and, by \eqref{eq:2015-08-13:05},
$$
\lim_{m\rightarrow +\infty}Ry_m=\lim_{m\rightarrow +\infty}
\sum_{k=0}^m\frac{t^k}{k!}A_n^kRx=e^{tA_n}Rx.
$$
Since $R$ is closed, it follows that $e^{tA_n}x\in D(R)$, and $Re^{tA_n}x=e^{tA_n}Rx$.
Since this holds for every $x\in D(R)$, we conclude $e^{tA_n}R\subset Re^{tA_n}$.

\smallskip
We can now prove that 
\eqref{eq:2015-09-28:04}
is satisfied.
Let $x\in D(R)$.
By \eqref{eq:2016-05-19:05},
$$
\lim_{n\rightarrow \infty}Re^{tA_n}x
=
\lim_{n\rightarrow \infty}e^{tA_n}Rx
=
S_tRx.
$$
Since $R$ is closed, we have $\lim_{n\rightarrow \infty}e^{tA_n}x=
S_tx\in  D(R)$ and $RS_tx=S_tRx$.
Then
\eqref{eq:2015-09-28:04}
is verified.

\smallskip
We can now conclude the proof.
By \eqref{eq:2016-05-19:05},
arguing as it was done for $S$, we obtain that every $S_n$ can be uniquely extended to the semigroup $e^{t\overline {A_n}}$ on $H_B$
generated by 
$\overline {A_n}$.
Similarly to \eqref{eq:2015-06-04:04},
we have
% and taking into account that $D(\overline {A_n})=H_B$,
\begin{equation}
  \label{eq:2015-09-23:02}
D(\overline{A_n})=\overline R(D(A_n))\qquad \mathrm{and}\qquad\overline{A_n} x=\overline RA_n\overline R^{-1}x \quad \forall x\in D(\overline{A_n}).
\end{equation}
% By \eqref{eq:2015-06-04:04}, we have also
% $$
% {\color{red}D(\overline {A_n})=\overline RD(A_n)=\overline RD(A)
% =
% D(\overline A)\quad D(A_n)=H \,\,\text{SO WHY IS}\,\, RD(A_n)=\overline RD(A)?\ MR:\ did\ I\ claim\ so?}
% $$
% $$
% D(\overline {A_n})=\overline RD(A_n)=\overline R H=H_B
% $$
We observe that $\overline R ( D(A_n) ) =\overline R(H)=H_B$.
If $x\in H$, by 
\eqref{2016-05-19:04},
 \eqref{eq:2015-06-04:04}, 
%\eqref{eq:2015-08-13:00} {\color{red}CHANGE},
\eqref{eq:2015-08-13:02},
 and 
\eqref{eq:2015-09-23:02}, we have %, for every $x\in H_B$,
\begin{equation*}
  \begin{split}
    \overline {A_n}x=\overline RA_n\overline R^{-1}x&=
\overline R nA(n-A)^{-1}\overline R^{-1}x=
 R nA(n-A)^{-1} R^{-1}x
\\
&=n( R A R^{-1})( R(n-A)^{-1} R^{-1})x
=n( R A R^{-1})(n-A)^{-1}x
%=n(\overline R A\overline R^{-1})(\overline R(n-A)^{-1}\overline R^{-1})x
=
n A(n- A)^{-1}x,
\end{split}
\end{equation*}
which can be written as
$$
\overline{A_n}x=n\overline A(n-\overline A)^{-1}x=\overline A_nx\qquad \forall x\in H,
$$
where $\overline A_n$ is the Yosida approximation of $\overline A$.
Finally, since both $\overline {A_n}$ and $\overline A_n$ are continuous on $H_B$, and since $H$ is dense in $H_B$, we obtain
$$
\overline{A_n}=\overline A_n,
$$
and then
 $e^{t\overline A_n}=e^{t\overline {A_n}}$, where $e^{t\overline A_n}$ is the semigroup generated by $\overline A_n$.
\end{proof}

\vskip10pt
In the remaining of this section we will assume that \eqref{2016-05-19:03} and
\eqref{2016-05-19:04}
 % \eqref{eq:2015-09-28:05}{\color{red}CHANGE}
 hold true.

% We now apply the observation above to SDEs. In order to look at the SDE in the space $H_B$, we will consider the following conditions on the coefficients.

\begin{assumption}\label{ipot:B-continuity-FG}
  % \begin{enumerate}
  % \item  
    The functions $ G$ and $\Sigma$ are Lipschitz with respect to the norm $|\cdot|_B$, with respect to the second variable and uniformly in the
  first one, that is there exists $M>0$ such that
  \begin{equation}\label{eq:2015-06-04:02}
    |  G(t,x)- G(t,y)|_{B}+|\Sigma(t,x)-\Sigma(t,y)|_{L(\mathbb{R}^m,H_B)}\leq M|
x-y|_B
\end{equation}
for all $t\in [0,T]$, $x,y\in H$.
Denote by $\overline G$ (resp.\ $\overline \Sigma$) the unique extension of $G$ (resp.\ $\Sigma$) to a function from $[0,T]\times H_B$ into $H_B$ (resp.\ from $[0,T]\times H_B$ into $L(\mathbb{R}^m,H_B)$).
% \item  In addition to assumption (i),
%  the unique extensions
% $\overline G$
% and
%  $\overline \Sigma$ 
% of $G$ and $\Sigma$ 
% belong to $C^{0,1}_b([0,T]\times H_B,H_B)$ and  $C^{0,1}_b([0,T]\times H_B,L(\mathbb{R}^m,H_B))$ respectively.
% \end{enumerate}
\end{assumption}

\begin{remark}\label{rem:assumptions}
% {\color{blue}Since $R\colon (D(R),|\cdot|_H)\rightarrow (H,|\cdot|_B)$ is an isometry, a sufficient and necessary condition for 
% % A sufficient {\color{red}(and necessary?)} condition for
% $G$ to satisfy Assumption \ref{ass:2015-06-04:01}
% and \eqref{eq:2015-06-04:02} is that
% \[
% G(t,x)=G_0(t,R^{-1}x)\qquad \forall (t,x)\in [0,T]\times H,
% \]
% for some continuous $G_0\colon[0,T]\times (D(R),|\cdot|_H)\to (H,|\cdot|_B)$ such that $G_0(t,\cdot)$ is Lipschitz continuous, uniformly for $t\in [0,T]$.
% This is true, in particular, for any 
% $G_0\colon[0,T]\times H\to H$ continuous,
% with $G_0(t,\cdot)$
%  Lipschitz continuous, uniformly in $t\in[0,T]$.}
 % {\color{red} THIS IS A NECESSARY AND SUFFICIENT CONDITION FOR \eqref{eq:2015-06-04:02} (AND FOR $\Sigma$)
 % BUT Assumption \ref{ass:2015-06-04:01} 
 % MAY NOT BE SATISFIED. $G$ MUST BE
 % ALSO LIPSCHITZ AS A FUNCTION INTO $(H,|\cdot|)$ WHICH THIS DOES NOT GUARANTEE. BUT IF WE CHANGE $(H,|\cdot|_B)$ TO
 % $(H,|\cdot|)$ THEN 
 % THE CONDITION IS SUFFICIENT BUT NOT NECESSARY. WHEN I WROTE IN THE E-MAIL THAT WHAT I WROTE BEFORE WAS NOT
 % GOOD ENOUGH I MEANT THAT I WANTED A CONDITION WHICH COULD BE VERIFIED IN AN EASIER WAY, HENCE WHAT I SUGGESTED.
 % I SUGGEST TO WRITE THE FOLLOWING.}
It is obvious that Assumptions \ref{ass:2015-06-04:01}
and
\ref{ipot:B-continuity-FG}
are satisfied if
\begin{equation}
\label{eq:contsigma0}
  |  G(t,x)- G(t,y)|_H+|\sigma(t,x)-\sigma(t,y)|_{L(\mathbb{R}^m)}\leq M|
x-y|_B.
\end{equation}
It is then easy to see that the functions
$G_0(t,x)=G(t,Rx)$ and $\sigma_0(t,x)=\sigma(t,Rx)$ defined on $[0,T]\times D(R)$ satisfy
\begin{equation}
\label{eq:contsigma1}
  |  G_0(t,x)- G_0(t,y)|_H+|\sigma_0(t,x)-\sigma_0(t,y)|_{L(\mathbb{R}^m)}\leq M|x-y|_H
\end{equation}
for $t\in [0,T]$ and $x,y\in D(R)$, and hence they
uniquely extend
 to functions defined on $[0,T]\times H$ satisfying
\eqref{eq:contsigma1} for all $t\in [0,T]$ and $x,y\in H$.
The converse is also true, i.e.\ \eqref{eq:contsigma1} implies \eqref{eq:contsigma0}.
 Thus \eqref{eq:contsigma0} is satisfied if and only if 
$G(t,x)=G_0(t,R^{-1}x)$, $\sigma(t,x)=\sigma_0(t,R^{-1}x)$, for $(t,x)\in [0,T]\times H$, for some $G_0$, $\sigma_0$ which satisfy
\eqref{eq:contsigma1} for all $t\in [0,T]$ and $x,y\in H$. 
We notice that for $\sigma$, \eqref{eq:contsigma0} is
also necessary for Assumptions \ref{ass:2015-06-04:01}
and \ref{ipot:B-continuity-FG}.

 For instance,
focusing on $\sigma$ (which corresponds to $\nu$ in the financial problem considered in Section~\ref{sec:motivationfinance}),  this condition is easily seen to be satisfied if
\[
\sigma(t,x)=f(t,\langle x,\bar y^1\rangle,\ldots,\langle x,\bar y^n\rangle)
\]
for some $f\colon[0,T]\times \mathbb{R}^n\to L(\mathbb{R}^m)$ Lipschitz continuous in the last $n$ variables (uniformly for $t\in [0,T]$) and
$\bar y^1,\ldots,\bar y^n\in D(R^*)$.
Indeed, in such a case we can write
\begin{equation}
  \label{eq:2016-05-19:08}
  \sigma(t,x)=f(t,\langle x,\bar y^1\rangle,\ldots,\langle x,\bar y^n\rangle)
=
f(t,\langle R^{-1}x,R^*\bar y^1\rangle,\ldots,\langle R^{-1}x,R^*\bar y^n\rangle)=\sigma_0(t,R^{-1}x),
\end{equation}
where $\sigma_0(t,x)=f(t,\langle x,R^*\bar y^1\rangle,\ldots,\langle x,R^*\bar y^n\rangle)$.
Since later in \eqref{eq:Blambda1} we take $R=A-I$,
in applications to our financial problem (Section~\ref{sec:motivationfinance}) this would mean that 
$$
\bar y^i=({\bar y}_0^i,{\bar y}_1^i)\in \mathbb{R}\times W^{1,2}(\mathbb{R}^-)\qquad  i=1,\ldots,n.
% \qquad
% \mbox{where}\qquad \bar y_0^i\in \mathbb{R}, \ {\bar y}_1^i\in W^{1,2}( \mathbb{R}^-).
$$
%{\color{red} where $\bar y^i_0=\bar y^i_1(0)$.}
 Thus a function of the form
 \begin{equation}
   \label{eq:2016-09-08:00}
   \sigma(t,x)=f\left(t, x_0^1 \bar y_0^1,\int_{-\infty}^0x_1^1(s)\bar y_1^1(s) ds,\ldots,x_0^n \bar y_0^n,\int_{-\infty}^0x_1^n(s)\bar y_1^n(s) ds\right),
 \end{equation}
 where $f\colon[0,T]\times \mathbb{R}^{2n}\to L(\mathbb{R}^m)$ is continuous in the $2n+1$ variables and Lipschitz continuous in the last $2n$
variables, uniformly for $t\in [0,T]$, satisfies Assumptions \ref{ass:2015-06-04:01}
and \ref{ipot:B-continuity-FG}. 
%{\color{red} OR WE CAN REMOVE THE LAST TWO SENTENCES SINCE PEOPLE MAY ASK WHERE $\bar y^i_0=\bar y^i_1(0)$ CAME FROM. I AM FINE EITHER WAY.}

One can also give an equivalent condition which may be easier to check.
% {\color{red}(MR: why did you write ``we give a more flexible \textbf{equivalent} condition''? I don't see that it is equivalent).
% EQUIVALENT IN A SENSE THAT THEY BOTH ARE EQUIVALENT TO \eqref{eq:contsigma0}. FOR SOME $K$ MEANS IN PARTICULAR
% THAT $K$ CAN BE $R^{-1}$.}
We can only require that $G(t,x)=G_0(t,Kx)$, $\sigma(t,x)=\sigma_0(t,Kx)$, for some $G_0,\sigma_0$
satisfying \eqref{eq:contsigma1} for all $t\in [0,T]$ and $x,y\in H$, and a bounded operator $K$ on $H$ such that
$|Kx|_H\leq C|R^{-1}x|_H$ for all $x\in H$. The last requirement (see e.g.\ \cite[p.\ 429, Proposition B.1]{DaPrato2014})
is equivalent to $K^*(H)\subset (R^{-1})^*(H)=D(R^*)$. 
In particular, if $K$ is the orthogonal projection onto a finite dimensional
subspace $H_0$ of $H$, then we need  $H_0\subset D(R^*)$.
By assuming without loss of generality that $\bar y^1,\ldots,\bar y^n$ in 
 \eqref{eq:2016-05-19:08}  are orthonormal, then
the previous example is readily reduced to the present if $K$ is
 the orthogonal projection onto 
$\operatorname{span}\{\bar y^1,\ldots,\bar y^n\}$.

Though functions like $\sigma$ in \eqref{eq:2016-09-08:00} are of a very special form (they are cylindrical  as functions of $x_1^1,\ldots,x_1^n$), 
it should be noticed that they are in general not smooth, since $f(t,\cdot)$ is only assumed to 
be Lipschitz continuous.
% For instance,
% focusing on $\sigma$ (which corresponds to $\nu$ in the financial problem considered in Section~\ref{sec:motivationfinance}), 
% an example of such a function may be given by
% \[
% \sigma(t,x)=f(t,\langle x,\bar y^1\rangle,\ldots,\langle x,\bar y^n\rangle)
% \]
% for some $f\colon[0,T]\times \mathbb{R}^n\to L(\mathbb{R}^m)$ Lipschitz continuous in the last $n$ variables (uniformly for $t\in [0,T]$) and
% $\bar y^1,\ldots,\bar y^n\in D(R^*)$. This corresponds to the case when $K$ is an orthogonal projection onto ${\rm span}\{\bar y^1,\ldots,\bar y^n\}$.
\end{remark}

Under Assumption
\ref{ipot:B-continuity-FG}
we can consider the following SDE on $H_B$ 
\begin{equation}\label{SDE-HB}
  \begin{dcases}
     d\overline X_s = \big(\overline A\overline X_s + \overline G\big(s,\overline X_s\big)\big) \de s+ \overline \Sigma\big(s,\overline X_s\big) \de W_s, \qquad s\in(t, T],\\
    \overline X_t =  x, 
  \end{dcases}
\end{equation}
where $x \in H_B$.
 By changing the reference Hilbert space from $H$ to $H_B$, we can apply Proposition \ref{2015-09-27:11} and say that
 % For $p\geq 2$, similarly to \eqref{eq:2013-02-19:ab},
 SDE \eqref{SDE-HB} has a unique mild solution $\overline X^{t, x }$ in
$\mathcal H_\mathcal{P}^p(H_B)$, and $[0,T]\times H_B\to \mathcal{H}^p_\mathcal{P}(H_B),\ (t, x )\mapsto \overline X^{t, x }$,
is continuous and $|\cdot|_B$-Lipschitz with respect to $ x $, uniformly in $t$.
% Moreover, for every $\varphi\in C( H_B,\mathbb{R})$ with polynomial growth, the following identity holds
% \begin{equation}
%   \label{eq:2015-06-08:01}
%   \mathbb{E}\left[\varphi(\overline X^{s,y}_u)\right](y=\overline X^{t, x }_s)=
%   \mathbb{E}\left[\left.\varphi(\overline X^{t, x }_u)\right|\mathcal{F}_s\right]
% \qquad\forall  0\leq t\leq s\leq u\leq T,\ \forall  x \in H_B.
% \end{equation}
%For future reference, we state this result in the following proposition, toghether with the differentiability with respect to the initial datum under  Assumption \ref{ipot:B-continuity-FG}-(ii).
% {\color{red}----------------------------------------------------------}\\
% If Assumption \ref{ipot:B-continuity-FG}-(ii) is also assumed, then the map
% \begin{equation}
%   [0,T]\times H_B\to \mathcal H^p_\mathcal{P}(H_B),\, (t, x )\mapsto \overline X^{t, x }
% \end{equation}
% belongs to $\mathcal{G}_s^{0,1}([0,T]\times H_B,\mathcal{H}_\mathcal{P}^p(H_B))$. {\color{red}citazione per la continuit\`a rispetto a $t$}
% {\color{red}----------------------------------------------------------}\\
% \vskip10pt
% For future reference, we collect some of the observations above in the following statement.

\begin{proposition}
  \label{prop:2015-06-08:05}
For any $p\geq 2$,  under Assumptions
\ref{ass:2015-06-04:01}
and \ref{ipot:B-continuity-FG},
there exists a unique mild solution
 $\overline  X ^{t, x }\in \mathcal{H}^p_\mathcal{P}(H_B)$ of SDE \eqref{SDE-HB}, and the map
% \begin{equation}
%   \label{eq:2015-06-19:01}
%   [0,T]\times (H,|\cdot|)\rightarrow \mathcal{H}_\mathcal{P}^p(H),\,(t,x)\mapsto X^{t,x}
% \end{equation}
% and
\begin{equation}
  \label{eq:2015-06-19:02}
  [0,T]\times H_B\rightarrow \mathcal{H}_\mathcal{P}^p(H_{B}),\,(t, x )\mapsto \overline X^{t, x }
\end{equation}
is continuous in $(t, x )$, and Lipschitz in $x$, uniformly in $t$. If $x\in H$, $\overline X^{t,x}\in \mathcal{H}_\mathcal{P}^p(H)$ and $\overline X^{t,x}=X^{t,x}$, % in $\mathcal{H}^p_\mathcal{P} \left( H \right) $ 
where
$X^{t,x}\in \mathcal{H}^p_\mathcal{P}(H)$ is the unique mild solution of SDE \eqref{eq:2013-02-19:ab}.
\end{proposition}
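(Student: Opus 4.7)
The plan is to regard \eqref{SDE-HB} as an SDE on the Hilbert space $H_B$ driven by the semigroup $\overline S$ with generator $\overline A$, and to apply verbatim the fixed-point argument used in Proposition~\ref{2015-09-27:11}, now in $\mathcal{H}_\mathcal{P}^p(H_B)$ in place of $\mathcal{H}_\mathcal{P}^p(H)$. Under the standing assumptions \eqref{2016-05-19:03} and \eqref{2016-05-19:04}, Proposition~\ref{2015-09-28:06}\emph{(\ref{2016-05-19:01})} provides that $\overline S$ is a strongly continuous semigroup of contractions on $H_B$, so the $H_B$-analogue of Assumption~\ref{ass:2015-06-17:00} holds; Assumption~\ref{ipot:B-continuity-FG} supplies the Lipschitz estimate
\begin{equation*}
|\overline G(t,x)-\overline G(t,y)|_B+|\overline \Sigma(t,x)-\overline \Sigma(t,y)|_{L(\mathbb{R}^m,H_B)}\leq M|x-y|_B
\end{equation*}
for all $t\in[0,T]$ and $x,y\in H_B$, which is the $H_B$-analogue of Assumption~\ref{ass:2015-06-04:01}.

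The only ingredient not explicitly postulated is joint continuity of $\overline G$ and $\overline\Sigma$ on $[0,T]\times H_B$. For $x\in H$, $\overline G(\cdot,x)=G(\cdot,x)$ is continuous valued in $H$ by Assumption~\ref{ass:2015-06-04:01}, hence continuous valued in $H_B$, because the inclusion $H\hookrightarrow H_B$ is continuous (since $|y|_B=|R^{-1}y|_H\leq |R^{-1}|_{L(H)}|y|_H$ for $y\in H$). For general $x\in H_B$, I would approximate by $x_n\in H$ with $|x_n-x|_B\to 0$; the uniform-in-$t$ Lipschitz estimate then gives $\sup_{t}|\overline G(t,x_n)-\overline G(t,x)|_B\to 0$, so $\overline G(\cdot,x)$ is the uniform limit of continuous maps, hence continuous, and the same reasoning works for $\overline\Sigma$. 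With these ingredients in hand, the contraction and continuity estimates from the proof of Proposition~\ref{2015-09-27:11} transpose word-for-word to $H_B$, producing a unique mild solution $\overline X^{t,x}\in\mathcal{H}_\mathcal{P}^p(H_B)$ of \eqref{SDE-HB} together with the joint continuity in $(t,x)$ and the $|\cdot|_B$-Lipschitz dependence on $x$ uniform in $t$.

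For the last claim, given $x\in H$ I would prove $\overline X^{t,x}=X^{t,x}$ by uniqueness in $\mathcal{H}_\mathcal{P}^p(H_B)$. The continuous inclusion $\iota\colon H\hookrightarrow H_B$ lets me view $X^{t,x}\in\mathcal{H}_\mathcal{P}^p(H)$ as an element of $\mathcal{H}_\mathcal{P}^p(H_B)$. Since $\overline S_r$, $\overline G$, $\overline\Sigma$ restrict to $S_r$, $G$, $\Sigma$ on $H$, applying $\iota$ to the $H$-valued mild identity for $X^{t,x}$ yields
\begin{equation*}
X^{t,x}_s=\overline S_{s-t}x+\int_t^s \overline S_{s-w}\overline G(w,X^{t,x}_w)\,dw+\int_t^s \overline S_{s-w}\overline \Sigma(w,X^{t,x}_w)\,dW_w,\qquad s\in[t,T],
\end{equation*}
interpreted as an identity in $H_B$. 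Hence $X^{t,x}$ is also a mild solution of \eqref{SDE-HB} in $\mathcal{H}_\mathcal{P}^p(H_B)$, and uniqueness forces $\overline X^{t,x}=X^{t,x}$, and in particular $\overline X^{t,x}\in\mathcal{H}_\mathcal{P}^p(H)$.

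The only nontrivial step is checking the joint continuity of $\overline G$ and $\overline \Sigma$ on $[0,T]\times H_B$, since Assumption~\ref{ipot:B-continuity-FG} only asserts a Lipschitz bound in $x$; once this is settled by the density argument above, everything else is a mechanical rerun of the proof of Proposition~\ref{2015-09-27:11} combined with a short uniqueness argument.
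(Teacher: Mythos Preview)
Your proposal is correct and follows essentially the same approach as the paper: apply Proposition~\ref{2015-09-27:11} with $H_B$ in place of $H$ to get existence, uniqueness, and the continuity/Lipschitz properties of $\overline X^{t,x}$, then use the continuous embedding $\mathcal{H}_\mathcal{P}^p(H)\hookrightarrow\mathcal{H}_\mathcal{P}^p(H_B)$ and uniqueness to identify $\overline X^{t,x}$ with $X^{t,x}$ when $x\in H$. Your argument is in fact more careful than the paper's, which simply asserts that Proposition~\ref{2015-09-27:11} applies without verifying that the joint continuity hypothesis of Assumption~\ref{ass:2015-06-04:01} transfers to $\overline G$ and $\overline\Sigma$; your density argument fills that gap cleanly.
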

\begin{proof}
The first part follows from Proposition \ref{2015-09-27:11}. It remains to comment on the fact that $X^{t,x}=\overline X^{t,x}$ if $x\in H$.
The space $\mathcal H_\mathcal{P}^p(H)$ is continuously
embedded in $\mathcal H^p_\mathcal{P}(H_B)$. Thus, if $G$ and
$\Sigma$ satisfy Assumptions
\ref{ass:2015-06-04:01}
and \ref{ipot:B-continuity-FG}, and if the initial value $ x $ belongs to $H$,
the mild solution $X^{t, x}$ of
\eqref{eq:2013-02-19:ab} is also a mild solution
of \eqref{SDE-HB}, and then, by  uniqueness of mild solutions, 
$X^{t,\overline x}=\overline X^{t, x }$  in $\mathcal{H}^p_\mathcal{P}(H_B)$.
\end{proof}

In order to obtain an a-priori estimate giving the regularity in which we are interested, we will need to approximate mild solutions with other mild solutions of SDEs with smoother coefficients.

\begin{proposition}
  \label{ss:2015-06-20:00}
 Let $G$ and $\sigma$ satisfy Assumptions  \ref{ass:2015-06-04:01}
and
\ref{ipot:B-continuity-FG}. There exist sequences $\{G_n\}_{n\in \mathbb{N}}\subset  C_b^{0,1}([0,T]\times H,H)$, $\{\Sigma_n\}_{n\in \mathbb{N}}\subset C_b^{0,1}([0,T]\times H,L(\mathbb{R}^m,H))$,
with
$\Sigma_n(t,x)y=(\sigma_n(t,x)y,0_1)$ for some
 $\sigma_n\in C_b^{0,1}([0,T]\times H,L(\mathbb{R}^m))$,
satisfying:
\begin{enumerate}[(i)]
\item 
For every $n\in \mathbb{N}$, $G_n$ and $\Sigma_n$ have extensions $\overline G_n\in C^{0,1}_b([0,T]\times H_B,H_B)$ and 
$\overline \Sigma_n \in C^{0,1}_b([0,T]\times H_B,L(\mathbb{R}^m,H_B))$.
\item  % there exist constants $M>0$ and $M_n>0, n\in \mathbb{N}$, such that
%   \begin{gather}
% %     | G_n(t,x)- G_n(t,y)|_H\leq M_n|x-y|_H\label{2015-09-24:03}\\
% |\Sigma_n(t,x)-\Sigma_n(t,y)|_{L(\mathbb{R}^m,H)}\leq M_n|x-y|_H\label{2015-09-24:04}
% \end{gather}
For all $(t,x),\,(t,y)\in [0,T]\times H_B$,
\begin{gather}
  \sup_{n\in \mathbb{N}}      | \overline G_n(t,x)- \overline G_n(t,y)|_B  \leq M|x-y|_B\label{2015-09-24:07}\\
  \sup_{n\in \mathbb{N}} |\overline \Sigma_n(t,x)-\overline \Sigma_n(t,y)|_{L(\mathbb{R}^m,H_B)}\leq M|x-y|_B.\label{2015-09-24:08}
\end{gather}
%for all $(t,x),\,(t,y)\in [0,T]\times H_B$;
\item 
For every compact set $K\subset H_B$,
\begin{gather}
  \lim_{n\rightarrow \infty}
\sup_{(t,x)\in [0,T]\times K}
|\overline G(t,x)-\overline G_n(t,x)|_B
=0 \label{eq:2015-09-24:02}
\\
  \lim_{n\rightarrow \infty}
\sup_{(t,x)\in [0,T]\times K}
|\overline \Sigma(t,x)-\overline \Sigma_n(t,x)|_{L(\mathbb{R}^m,H_B)}
=0.\label{eq:2015-09-24:02b}
\end{gather}
% for every compact set $K\subset H_B$.
\end{enumerate}
\end{proposition}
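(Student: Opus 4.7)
My plan is to build $G_n$ and $\Sigma_n$ by combining a finite-dimensional Galerkin-type projection in $H_B$ with a mollification in the resulting finite-dimensional coordinates. Since $H$ is dense in $H_B$, I can fix an orthonormal basis $\{e_k\}_{k\geq 1}$ of $H_B$ with every $e_k\in H$. Let $\overline P_n$ denote the $H_B$-orthogonal projection onto $\mathrm{span}\{e_1,\ldots,e_n\}$, so $\overline P_n x=\sum_{k=1}^{n}\langle x,e_k\rangle_B e_k\in H$ for each $x\in H_B$; note that $\overline P_n$ is a contraction in $|\cdot|_B$, while $\overline P_n|_H$ is bounded $H\to H$ (with an $n$-dependent constant). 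Pick standard mollifiers $\rho_n\in C_c^\infty(\mathbb{R}^n,[0,\infty))$ with $\int\rho_n=1$ and $\mathrm{supp}(\rho_n)\subset B_{\mathbb{R}^n}(0,1/n)$. I then set
$$
\overline G_n(t,x)\coloneqq\int_{\mathbb{R}^n}\overline G\Big(t,\,\overline P_n x+\sum_{k=1}^n z_k e_k\Big)\rho_n(z)\,dz,\qquad x\in H_B,
$$
and define $\overline\Sigma_n$ analogously. For $x\in H$ and $z\in\mathrm{supp}(\rho_n)$ the argument $\overline P_n x+\sum z_k e_k$ lies in $H$, so the integrand is $H$-valued; hence $\overline G_n(t,x)\in H$ for $x\in H$, and the restrictions $G_n\coloneqq\overline G_n|_H$, $\Sigma_n\coloneqq\overline\Sigma_n|_H$ take values in $H$ and $L(\mathbb{R}^m,H)$ respectively. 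The product structure $\Sigma_n(t,x)y=(\sigma_n(t,x)y,0_1)$ is automatically inherited from $\Sigma$, with $\sigma_n(t,x)\coloneqq\int \sigma(t,\overline P_n x+\sum z_k e_k)\rho_n(z)\,dz$.

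The key regularity observation is that $\overline G_n(t,x)=F_n(t,\pi_n(x))$, where $\pi_n(x)\coloneqq(\langle x,e_k\rangle_B)_{k=1}^n$ and $F_n(t,c)\coloneqq\int_{\mathbb{R}^n}\overline G(t,\sum w_k e_k)\rho_n(w-c)\,dw$. Smoothness of $\rho_n$ and dominated convergence give $F_n(t,\cdot)\in C^\infty(\mathbb{R}^n,H_B)$ with joint continuity in $(t,c)$. Exploiting $|\sum v_k e_k|_B=|v|_{\mathbb{R}^n}$ together with Assumption \ref{ipot:B-continuity-FG} yields the incremental estimate
$$
|F_n(t,c+sv)-F_n(t,c)|_{H_B}\leq M\int\Big|{\textstyle\sum_k sv_k e_k}\Big|_B\rho_n(w-c)\,dw=Ms|v|_{\mathbb{R}^n},
$$
whence $|D_c F_n(t,c)|_{L(\mathbb{R}^n,H_B)}\leq M$. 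Consequently $|D_x\overline G_n(t,x)h|_{H_B}\leq M|\pi_n(h)|_{\mathbb{R}^n}\leq M|h|_B$, which simultaneously gives $\overline G_n\in C_b^{0,1}([0,T]\times H_B,H_B)$ and the uniform bound \eqref{2015-09-24:07} of (ii). Rerunning the same argument with $|\cdot|_H$ and Assumption \ref{ass:2015-06-04:01} in place of $|\cdot|_B$ and \ref{ipot:B-continuity-FG} gives $G_n\in C_b^{0,1}([0,T]\times H,H)$; the $H$-Lipschitz constant now depends on $n$ through $\sum_{k\leq n}|e_k|_H^2$, but this is fine since the statement only requires uniformity in the $B$-norm. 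The case of $\sigma_n$/$\Sigma_n$ is handled identically, applied pointwise in $v\in\mathbb{R}^m$.

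For (iii), I split
$$
|\overline G_n(t,x)-\overline G(t,x)|_B\leq \int_{\mathbb{R}^n}M\Big|\sum_k z_k e_k\Big|_B\rho_n(z)\,dz+|\overline G(t,\overline P_n x)-\overline G(t,x)|_B\leq \frac{M}{n}+M|\overline P_n x-x|_B,
$$
and invoke the strong convergence $\overline P_n\to I$ on $H_B$, which is uniform on any compact $K\subset H_B$; joint continuity of $\overline G$ on the compact $[0,T]\times K$ promotes the uniformity to the $t$-variable and gives \eqref{eq:2015-09-24:02}. Applying the same scheme to each unit $v\in\mathbb{R}^m$ delivers \eqref{eq:2015-09-24:02b}. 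The main technical subtlety is securing the \emph{uniform-in-$n$} Lipschitz bound in $|\cdot|_B$: had I chosen an ONB orthonormal in $H$, the projection $\overline P_n$ would fail to be a $B$-contraction, spoiling (ii); conversely, a full Gaussian convolution on $H_B$ would evaluate $\overline G$ at translates $x+y$ with $y\in H_B\setminus H$, producing values outside $H$ and breaking the requirement that $G_n$ map $H$ into $H$. The finite-dimensional mollification along an ONB of $H_B$ that happens to lie inside $H$ neatly balances both constraints.
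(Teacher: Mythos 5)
Your construction is essentially the paper's own proof: both mollify the coefficients along a finite-dimensional $B$-orthogonal projection onto the span of an orthonormal basis of $H_B$ chosen inside $H$, with mollifier support shrinking like $1/n$, obtain the uniform $B$-Lipschitz bounds from the fact that this projection is a $|\cdot|_B$-contraction while the $H$-bounds are allowed to degenerate in $n$, and prove (iii) by the same split into a mollification error of order $M/n$ plus the projection error $M\sup_{x\in K}|\overline P_nx-x|_B$. The only (cosmetic) difference is that you get the Lipschitz estimates via a derivative bound on the finite-dimensional kernel $F_n$, whereas the paper estimates the differences directly under the integral sign.
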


\begin{remark}\label{2015-09-30:00}
We remark that, due to the fact that the range of 
$\Sigma$ is finite-dimensional (see \eqref{eq:2015-06-20:01}), once the above continuity/differentiability/approximation conditions for
 $\overline\Sigma_n$ are satisfied with respect to $H_B$, they automatically hold for $\Sigma_n$ with respect to $H$.
\end{remark}

\begin{proof}[Proof of Proposition~\ref{ss:2015-06-20:00}]
 The proof uses approximations similar to those in 
\cite{peszat1995}.
 Let $\{e_n\}_{n\in \mathbb{N}}$ be an orthonormal basis of $H_{B}$ contained in $H$. 
For $n\in\mathbb{N}$, let us define the functions
$$
I_n\colon \mathbb{R}^n\rightarrow H_B,\ y \mapsto \sum_{k=1}^{n}y_k e_k
$$
and
$$
 P_n\colon H_B\rightarrow \mathbb{R}^n,\ x \mapsto (\langle x,e_1\rangle_B,\ldots,\langle x,e_{n}\rangle_B).
$$
% $$
%  P_n\colon H_B\rightarrow H_B,\ x \mapsto \sum_{k=0}^{n-1}\langle x,e_k\rangle_Be_k
% $$
% be the orthogonal projection of $H_B$ onto $V_n$.
It is clear that $|I_n|_{L(\mathbb{R}^n,H_B)}=1$ and $|I_nP_n|_{L(H_B)}=1$.
We observe also that, for every $n\in \mathbb{N}$, the linear operator
$$
H_B\rightarrow H,\ x \mapsto I_nP_nx=\sum_{k=1}^{n}\langle x,e_k\rangle_Be_k
$$
is well defined and continuous.
Denote $c_n\coloneqq |I_nP_n|_{L(H_B,H)}$.

Let
$$
\varphi(r)\coloneqq \begin{dcases}
  e^{-\frac{1}{1-r^2}}&\mathrm{if}\ r\in(-1,1)\\
  0&\mathrm{otherwise,}
\end{dcases}
$$
and, for every $n\in \mathbb{N}$,
$$
C_n\coloneqq  \left( \int_{\mathbb{R}^n}\varphi(n|y|_n)dy \right) ^{-1},
$$
where $|\cdot|_n$ denotes the Euclidean norm in $\mathbb{R}^n$.
% for $r\in (-1,1)$, and $\varphi(r)=0$ for $r\leq -1$ or $r\geq 1$.
%any smoooth function with support contained in $[-1,1]$.
% For every $n\in \mathbb{N}$, denote by $B_n$ the ball of radius $n$ in $\mathbb{R}^n$, and
%  define
Define
$$
g_n\colon [0,T]\times \mathbb{R}^n\rightarrow H
$$
by standard mollification
$$
g_n(t,y)\coloneqq C_n \left( G(t,I_n\cdot)*\varphi(n|\cdot|_n) \right) (y)=C_n\int_{\mathbb{R}^n}G \left( t,\sum_{k=0}^{n-1}z_k e_k \right) \varphi(n|y-z|_n)dz,
$$
for all $(t,y)\in [0,T]\times \mathbb{R}^n$.
We observe that $g_n$ is well-defined, because $G$ is $H$-valued and continuous, and $\varphi$ has compact support.
By Lebesgue's dominated convergence theorem, 
$g_n$ is continuous.

Since the map $\mathbb{R}^n \rightarrow \mathbb{R},\ z \mapsto \varphi(n|z|)$,  is continuously differentiable and has compact support and since $G$ is continuous, by a standard argument we can differentiate under the integral sign to obtain
$g_n$
 is differentiable with respect to $y$ and
$$
D_yg_n(t,y)v=
nC_n\int_{\mathbb{R}^n}G \left( t,I_nz
% \sum_{k=0}^{n-1}z_k e_k
\right) \varphi'(n|y-z|_n)\frac{\langle y-z,v \rangle_n}{|y-z|_n}dz.
$$
%Then $g_n\in C_b^{0,1}([0,T]\times \mathbb{R}^n,H)$.
By Lebesgue's dominated convergence theorem, the map
$$
[0,T]\times \mathbb{R}^n\times \mathbb{R}^n\rightarrow H,\ 
(t,y,v) \mapsto 
D_yg_n(t,y)v
$$ 
is continuous. 
Thus $g_n\in C^{0,1} \left( [0,T]\times \mathbb{R}^n,H \right) $.
Define
 $$
 \overline G_n\colon [0,T]\times H_B \rightarrow H_B
$$
by
$$
\overline G_n(t,x)\coloneqq
g_n(t,P_nx)=C_n\int_{\mathbb{R}^n}G \left( t,I_nP_nx-I_nz
% \sum_{k=0}^{n-1}(\langle x,e_k\rangle_B-z_k) e_k 
\right) \varphi(n|z|_n)dz\ \ \ \ \forall (t,x)\in [0,T]\times H_B.
% C_n\int_{V_n}\overline G(t,y-P_nx)\varphi(n|y|_{B})d\mu_n(y)\ \ \ \forall (t,x)\in[0,T]\times H_B,
$$
Since $\overline G_n([0,T]\times H_B)\subset H$,  we can also define $G_n\colon [0,T]\times H\rightarrow H$ by $G_n(t,x)\coloneqq \overline G_n(t,x)$ for every $(t,x)\in [0,T]\times H$. 
%If we denote by $E$ the canonical embedding $H\rightarrow H_B$, we have
%$$
%G_n(t,x)=g_n(t,P_n E  x)\ \ \forall (t,x)\in[0,T]\times H\quad \mathrm{and}\quad 
%\overline G_n(t,x)= E  g_n(t,P_n x)\ \ \forall (t,x)\in[0,T]\times H_B
%$$
Then
$G_n\in C^{0,1}([0,T]\times H,H)$ and $\overline G_n\in C^{0,1}([0,T]\times H_B,H_B)$.
Moreover, by Assumption  \ref{ass:2015-06-04:01}, %for every $t\in[0,T]$, $x,x'\in \mathbb{R}^n$,
\begin{equation}\label{2015-09-24:00}
  \begin{split}
      |G_n(t,x)-&G_n(t,x')|_H=
      |g_n(t,P_nx)-g_n(t,P_nx')|_H \\
&\leq C_n\int_{\mathbb{R}^n}\left|G \left( t,I_nP_nx-I_nz
%\sum_{k=0}^{n-1}z_k e_k 
\right)
-G \left( t,I_nP_nx'-I_nz
% \sum_{k=0}^{n-1}z_k e_k 
\right)\right|_H
 \varphi(n|z|_n)dz\\
&\leq M\left|I_nP_nx-I_nP_nx'\right|_H\leq Mc_n|x-x'|_B\leq Mc_n|R^{-1}|_{L(H)}|x-x'|_H,
\end{split}
\end{equation}
for every $t\in[0,T]$ and $x,x'\in H$.
Similarly, by Assumption \ref{ipot:B-continuity-FG},
\begin{equation}\label{2015-09-24:01}
  \begin{split}
          |\overline G_n(t,x)-\overline G_n(t,x')|_B&=      |g_n(t,P_nx)-g_n(t,P_nx')|_B\\
%\leq C_n\int_{\mathbb{R}^n}\left|G \left( t,\sum_{k=0}^{n-1}(y_k-z_k) e_k \right)-G \left( t,\sum_{k=0}^{n-1}(y'_k-z_k) e_k \right)\right|_H \varphi(n|z|_n)dz
& \leq M\left|I_nP_nx-I_nP_nx'\right|_B
\leq M\left|x-x'\right|_B,
\end{split}
\end{equation}
for every $t\in[0,T]$ and $x,x'\in H_B$.
Thus $G_n\in C_b^{0,1}([0,T]\times H,H)$ and $\overline G_n\in C_b^{0,1}([0,T]\times H_B,H_B)$.

To prove \eqref{eq:2015-09-24:02}
for every compact $K\subset H_B$, we first notice that
\[
\sup_{x\in K}|I_nP_nx-x|_B=\epsilon_n\to 0\quad\text{as}\,\,n\to+\infty.
\]
Thus
by \eqref{eq:2015-06-04:02},
\begin{equation}\label{2015-09-24:06}
\lim_{n\rightarrow +\infty}   \sup_{(t,x)\in [0,T]\times K}| \overline G(t,I_nP_nx)-\overline G(t,x)|_B
\leq \lim_{n\rightarrow +\infty} M\epsilon_n=0.
\end{equation}
Moreover, for $(t,x)\in[0,T]\times H_B$,
\begin{equation*}
  \begin{split}
   & |\overline G(t,I_nP_nx)-\overline G_n(t,x)|_{B}\leq
C_n\int_{\mathbb{R}^n}\left|G \left( t,I_nP_nx- I_nz
\right)
-G \left( t,I_nP_nx\right)
\right|_B \varphi(n|z|_n)dz\\
&\quad\quad
\leq
MC_n\int_{\mathbb{R}^n}|I_nz
|_B \varphi(n|z|_n)dz\leq
MC_n\int_{\mathbb{R}^n}|z
|_n \varphi(n|z|_n)dz\leq \frac{M}{n}.
  \end{split}
\end{equation*}
This, together with \eqref{2015-09-24:06}, gives \eqref{eq:2015-09-24:02}.

We have thus proved that $\{G_n\}_{n\in \mathbb{N}}\subset C^{0,1}_b([0,T]\times H,H)$, 
that $\{\overline G_n\}_{n\in \mathbb{N}}\subset C^{0,1}_b([0,T]\times H_B,H_B)$,  and 
that \eqref{2015-09-24:07}
and \eqref{eq:2015-09-24:02} hold true.

The other half of the proof, regarding $\Sigma$, is similar. We only make a few comments.
For $n\in \mathbb{N}$, define
$$
\zeta_n\colon \mathbb{R}^n\rightarrow L(\mathbb{R}^m)
$$
by
$$
\zeta_n(t,y)\coloneqq C_n \big( \sigma(t,I_n\cdot)*\varphi(n|\cdot|_n) \big) (y)
=C_n\int_{\mathbb{R}^n}\sigma \left( t,\sum_{k=1}^{n}z_k e_k \right) \varphi(n|y-z|_n)dz,
$$
for all $(t,y)\in [0,T]\times \mathbb{R}^n$, and
$
\overline \sigma_n\colon [0,T]\times H_B\rightarrow L(\mathbb{R}^m)
$
by  $\overline \sigma_n(t,x)\coloneqq \zeta_n(t,I_nP_nx)$ for all 
$(t,y)\in [0,T]\times \mathbb{R}^n$, and $n\in \mathbb{\mathbb{N}}$.
 Arguing as it was done done for $g_n$, we have that $\zeta_n\in C^{0,1}([0,T]\times \mathbb{R}^n,L(\mathbb{R}^m))$, 
and then $\overline \sigma_n \in C^{0,1}([0,T]\times H_B,L(\mathbb{R}^m))$. Moreover,
 \begin{equation}\label{2015-09-24:09}
   |\overline \sigma_n(t,x)-\overline \sigma_n(t,x')|_B     
   \leq M\left|x-x'\right|_B,
 \end{equation}
and hence $\overline \sigma_n\in C_b^{0,1}([0,T]\times H_B,L(\mathbb{R}^m))$. 
The proof of \eqref{eq:2015-09-24:02b} is done in the same way as for $\overline G_n$.
Finally we define
$$
\overline
\Sigma_n(t,x)y\coloneqq
(\overline 
\sigma_n(t,x)y,0_1)
\ \ \ \forall (t,x)\in[0,T]\times H_B, \ \forall y\in \mathbb{R}^n, \ \forall n\in \mathbb{N}
$$
and
$$
\Sigma_n(t,x)y\coloneqq 
\overline \Sigma_n(t,x)y
%(\overline \sigma_n(t,x)u,0_1)\overline \Sigma_n(t,x)y\coloneqq (\overline \sigma_n(t,x)u,0_1)
\ \ \ \forall (t,x)\in[0,T]\times H, \ \forall y\in \mathbb{R}^n, \ \forall n\in \mathbb{N}.
$$
This concludes the proof.
\end{proof}

\vskip10pt
Unless specified otherwise, Assumptions  \ref{ass:2015-06-04:01} and \ref{ipot:B-continuity-FG} will be standing for 
the remaining part of the
manuscript,
and
$\{ G_n\}_{n\in \mathbb{N}}$,
$\{ \Sigma_n\}_{n\in \mathbb{N}}$,
 $\{\overline G_n\}_{n\in \mathbb{N}}$,
$\{\overline \Sigma_n\}_{n \mathbb{N}}$ will denote the sequences introduced in Proposition
\ref{ss:2015-06-20:00}.

\vskip10pt

Let $\{A_n\}_{n\geq 1}$ be the Yosida approximation of $A$. We recall that
for every $n\geq 1$, by Proposition \ref{2015-09-28:06}, $A_n$ has a unique  continuous
extension $\overline {A_n}$ to $H_B$, and $\overline{A_n}=\overline A_n$, where $\{\overline A_n\}_{n\geq 1}$ 
is the Yosida approximation of the infinitesimal generator $\overline A$ of
$\overline S$. We remind that we denote by $e^{t\overline {A}_n}$ the semigroup generated by $\overline {A}_n$. 
% By Proposition 
% \ref{2015-09-28:06}, $\overline A_k$
% \vskip10pt
% Let $\{A_k\}_{k>0}$ be the Yosida approximations of $A$.
%For $n,k\geq 1$ and $t\in [0,T]$ we denote by $X_{n,k}^{t, x },\overline X_{n,k}^{t, x },X_n^{t, x },\overline X_{n}^{t, x }$  
For $t\in [0,T)$
and $n\geq 1$,   
we denote by $X_{n}^{t, x }$, $\overline X_{n}^{t, x }$ 
respectively the unique mild solutions to
\begin{equation}\label{eq:2015-06-23:08}
  \begin{dcases}
     d X_{n,s} = \left(  A_n  X_{n,s} +   G_n\left(s,  X_{n,s}\right)\right) \de s+   \Sigma_n\left(s,  X_{n,s}\right) \de W_s \qquad s\in(t, T]\\
      X_{n,t} =  x\in H,
  \end{dcases}
\end{equation}
\begin{equation}\label{eq:2015-06-21:00}
  \begin{dcases}
     d\overline X_{n,s} = \big(\overline A_n\overline X_{n,s} + \overline G_n\big(s,\overline X_{n,s}\big)\big) \de s+ \overline \Sigma_n
     \big(s,\overline X_{n,s}\big) \de W_s \qquad s\in (t, T]\\
    \overline X_{n,t} =  x\in H_B.
  \end{dcases}
\end{equation}
For any $p\geq 2$, existence and uniqueness of mild solution are provided by Propositions \ref{2015-09-27:11} and
\ref{prop:2015-06-08:05}, together with the continuity of the maps
  \begin{equation}
    [0,T]\times H\rightarrow \mathcal{H}_\mathcal{P}^p(H),\,(t,x)\mapsto X_n^{t,x}\qquad
    [0,T]\times H_B\rightarrow \mathcal{H}_\mathcal{P}^p(H_B),\,(t,x)\mapsto \overline X_n^{t,x}.
  \end{equation}

% For any $p\geq 2$,  under Assumptions
% \ref{ass:2015-06-04:01}

% \begin{equation}\label{eq:2015-06-23:08a}
%   \begin{dcases}
%      d X_{n,s} = \left(  A  X_{n,s} +   G_n\left(s,  X_{n,s}\right)\right) \de s+   \Sigma_n\left(s,  X_{n,s}\right) \de W_s \qquad 0 < s\leq T\\
%       X_{n,t} =  x\in H.
%   \end{dcases}
% \end{equation}
% \begin{equation}\label{eq:2015-06-23:08ab}
%   \begin{dcases}
%      d \overline X_{n,s} = \left(\overline A \, \overline X_{n,s} +   \overline G_n\left(s,  \overline X_{n,s}\right)\right) \de s+  \overline \Sigma_n\left(s,  
%      \overline X_{n,s}\right) \de W_s \qquad 0 < s\leq T\\
%      \overline X_{n,t} =  x\in H_B.
%   \end{dcases}
% \end{equation}

\begin{proposition}
  \label{prop:2015-06-20:03}
Let Assumptions
\ref{ass:2015-06-04:01} and \ref{ipot:B-continuity-FG} hold and let
$p\geq 2$.
Then:
\begin{enumerate}[(i)]
\item\label{2015-09-28:07} For every $n\in \mathbb{N}$ and $x\in H$, $X^{t,x}_n=\overline X^{t,x}_n$ (in $\mathcal{H}^p_\mathcal{P}(H_B)$).
\item \label{2015-09-28:08}
$
\lim_{n\rightarrow +\infty}\overline X^{t, x }_n= \overline X^{t, x }$ in $\mathcal{H}_\mathcal{P}^p(H_B)$ 
uniformly for $(t, x )$ on compact sets of $[0,T]\times H_B$.
% \item 
% \overline X^{t, x }_{n}\rightarrow \overline X^{t, x }$ in $\mathcal{H}_\mathcal{P}^p(H)$ as $n\rightarrow \infty$,
% uniformly for $(t, x )$ on compact sets of $[0,T]\times H_B$.
\item \label{2015-09-28:09}
For every $n\in \mathbb{N}$ the map
\begin{equation}
  \label{eq:2015-09-29:00}
  [0,T]\times H_B\rightarrow \mathcal{H}_\mathcal{P}^p(H_B),\, (t, x )\mapsto \overline X^{t, x }_n
\end{equation}
belongs to $\mathcal{G}^{0,1}_s([0,T]\times H_B,\mathcal{H}^p_\mathcal{P}(H_B))$.
\item \label{2015-09-28:09b} The set $\{\nabla_{ x } \overline X_{n}^{t, x }\}_{n\in \mathbb{N}}$ is
bounded
 in $L(H_B,\mathcal{H}_\mathcal{P}^p(H_B))$,
uniformly for $(t,x)\in [0,T]\times H_B$.
\end{enumerate}
%{\color{red}If also Assumptions
%\ref{ass:2015-06-04:01}-(ii) and
%  \ref{ipot:B-continuity-FG}-(ii) is satisfied, then the map
% % \eqref{eq:2015-06-19:01} belongs to $\mathcal{G}_s^{0,1}([0,T]\times H, \mathcal{H}_\mathcal{P}^p(H))$, and the map
% \eqref{eq:2015-06-19:02}  belongs to $\mathcal{G}_s^{0,1}([0,T]\times H_B, \mathcal{H}_\mathcal{P}^p(H_B))$.}
\end{proposition}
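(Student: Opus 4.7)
For part \emph{(\ref{2015-09-28:07})}, I would copy the argument of Proposition~\ref{prop:2015-06-08:05}. Since $H\hookrightarrow H_B$ continuously, $\mathcal{H}^p_\mathcal{P}(H)\hookrightarrow \mathcal{H}^p_\mathcal{P}(H_B)$. On $H$ the operator $\overline{A}_n$ coincides with $A_n$ (Proposition~\ref{2015-09-28:06}\emph{(\ref{2016-05-19:02})}), and by construction $\overline{G}_n|_H=G_n$, $\overline{\Sigma}_n|_H=\Sigma_n$; thus for $x\in H$ the process $X_n^{t,x}$ satisfies the mild-form identity \eqref{eq:2015-06-21:00}. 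By uniqueness of the mild solution in $\mathcal{H}^p_\mathcal{P}(H_B)$ (Proposition~\ref{prop:2015-06-08:05} applied to the $n$-th equation), $X_n^{t,x}=\overline{X}_n^{t,x}$.

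For part \emph{(\ref{2015-09-28:08})}, I would set up on $H_B$ the fixed-point maps $\Phi_n(t;x,\cdot)$ associated with \eqref{eq:2015-06-21:00} and $\Phi(t;x,\cdot)$ associated with \eqref{SDE-HB}, exactly as in the proof of Proposition~\ref{2015-09-27:11}. The uniform Lipschitz estimates \eqref{2015-09-24:07}--\eqref{2015-09-24:08} together with the contraction constant $M$ on $\overline{G},\overline{\Sigma}$ let one pick a single $\gamma=\gamma(p,T,M)>0$ so that $\Phi_n(t;x,\cdot)$ and $\Phi(t;x,\cdot)$ are all strict contractions of ratio $\tfrac{1}{2}$ on $(\mathcal{H}^p_\mathcal{P}(H_B),|\cdot|_{\mathcal{H}^p_\mathcal{P}(H_B),\gamma})$, uniformly in $(t,x)$ and in $n$. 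By Proposition~\ref{2015-09-28:06}\emph{(\ref{2016-05-19:02})} and the Hille--Yosida theorem, $e^{t\overline{A}_n}\to \overline{S}_t$ strongly in $H_B$ uniformly for $t$ in compacts; combined with \eqref{eq:2015-09-24:02}--\eqref{eq:2015-09-24:02b}, Burkholder--Davis--Gundy, and dominated convergence, one gets $\Phi_n(t;x,Z)\to \Phi(t;x,Z)$ in $\mathcal{H}^p_\mathcal{P}(H_B)$ for each fixed $(t,x,Z)$. Applying the uniform-contraction fixed-point estimate \cite[inequality $(***)$ on p.~13]{Granas2003} pointwise yields $\overline{X}_n^{t,x}\to \overline{X}^{t,x}$ pointwise in $(t,x)$. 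To upgrade to convergence uniform on compact sets $K\subset [0,T]\times H_B$, use equicontinuity: the Lipschitz-in-$x$-uniform-in-$n$ bound coming from the same weighted contraction, plus the joint continuity of $(t,x)\mapsto \overline{X}^{t,x}$ from Proposition~\ref{prop:2015-06-08:05}, permits a standard Arzel\`a--Ascoli-type reduction from pointwise to uniform convergence on $K$.

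For part \emph{(\ref{2015-09-28:09})}, I use that $\overline{A}_n$ is bounded on $H_B$ and that $\overline{G}_n,\overline{\Sigma}_n\in C^{0,1}_b$. Formally differentiating the mild-form identity for $\overline{X}_n^{t,x}$ in the direction $h\in H_B$ suggests that the Gâteaux derivative $Y_n^{t,x,h}\coloneqq \nabla_x \overline{X}_n^{t,x}h$ should solve the linear mild equation
\begin{equation*}
Y_{n,s}^{t,x,h}=e^{(s-t)\overline{A}_n}h+\int_t^s e^{(s-w)\overline{A}_n}D_x\overline{G}_n(w,\overline{X}_{n,w}^{t,x})Y_{n,w}^{t,x,h}\,dw+\int_t^s e^{(s-w)\overline{A}_n}D_x\overline{\Sigma}_n(w,\overline{X}_{n,w}^{t,x})Y_{n,w}^{t,x,h}\,dW_w,
\end{equation*}
which has a unique mild solution in $\mathcal{H}^p_\mathcal{P}(H_B)$, linear and bounded in $h$, by the same weighted-norm fixed-point argument. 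I would then verify that $Y_n^{t,x,h}$ really is the Gâteaux derivative by writing the incremental quotient $\varepsilon^{-1}(\overline{X}_n^{t,x+\varepsilon h}-\overline{X}_n^{t,x})-Y_n^{t,x,h}$, plugging it into the mild equation, expanding $\overline{G}_n,\overline{\Sigma}_n$ to first order (their Fréchet differentiability yields a remainder $o(\varepsilon)$), and concluding by Gr\"onwall. Strong continuity of $(t,x)\mapsto Y_n^{t,x,h}$ for fixed $h$ follows from the continuous dependence of the linear mild SDE on its parameters $(t,x)$, because $\overline{X}_n^{t,x}$ depends continuously on $(t,x)$ in $\mathcal{H}^p_\mathcal{P}(H_B)$ and $D_x\overline{G}_n,D_x\overline{\Sigma}_n$ are continuous and bounded.

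Part \emph{(\ref{2015-09-28:09b})} then drops out of the same linear SDE for $Y_n^{t,x,h}$: the bounds $|D_x\overline{G}_n|\le M$, $|D_x\overline{\Sigma}_n|\le M$ uniform in $n$ (inherited from \eqref{2015-09-24:07}--\eqref{2015-09-24:08}) make the weighted-norm contraction constant and the a-priori estimate on $|Y_n^{t,x,h}|_{\mathcal{H}^p_\mathcal{P}(H_B),\gamma}$ depend only on $p,T,M$, giving $|Y_n^{t,x,h}|_{\mathcal{H}^p_\mathcal{P}(H_B)}\le C(p,T,M)|h|_B$. The main technical obstacle I expect is part \emph{(\ref{2015-09-28:08})}: ensuring simultaneous uniformity in $n$, $t$, and $x$ on compact sets while juggling the Yosida semigroup approximations on $H_B$, rather than just on $H$, requires carefully combining Proposition~\ref{2015-09-28:06} with the mollified coefficients from Proposition~\ref{ss:2015-06-20:00}.
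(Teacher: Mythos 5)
Your parts \emph{(i)}, \emph{(iii)} and \emph{(iv)} are in substance the paper's proof: \emph{(i)} is exactly the embedding-plus-uniqueness argument; for \emph{(iii)} the paper simply invokes \cite[p.\ 243, Th.\ 9.8]{DaPrato2014} to get G\^ateaux differentiability and then characterizes $\nabla_x\overline X_n^{t,x}y$ as the fixed point of the linear map $\Psi_n(t,x;y,\cdot)$, whereas you propose to re-prove that citation by hand (incremental quotients, first-order expansion, Gr\"onwall) --- more laborious but sound; and \emph{(iv)} is the same weighted-norm estimate via \cite[inequality ($***$) on p.\ 13]{Granas2003}, giving a bound depending only on $p,T,M$.

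The genuine gap is in your part \emph{(ii)}, in the final upgrade ``pointwise convergence $+$ equicontinuity $\Rightarrow$ uniform on compacts.'' The only equicontinuity you have is in $x$, uniformly in $t$ and $n$ (the weighted contraction gives the analogue of \eqref{2015-09-28:19}); the family $\{\overline X_n^{t,x}\}_{n}$ is \emph{not} equicontinuous in $t$ uniformly in $n$, because any natural modulus of continuity in $t$ involves $|\overline A_n|_{L(H_B)}$ (compare the constants $C_1,C_2$ in the proof of Proposition \ref{prop:2015-06-21:02}), which blows up as $n\to+\infty$ since the $\overline A_n$ are Yosida approximations of an unbounded operator. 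Equi-Lipschitz in $x$ plus continuity of the limit does not rescue this: functions constant in $x$ but converging only pointwise (not uniformly) in $t$ to a continuous limit satisfy all your hypotheses and defeat the Arzel\`a--Ascoli reduction on a compact of the form $[0,T]\times\{x\}$. The paper closes exactly this hole by proving the stronger \emph{diagonal} (continuous) convergence in $t$: for every sequence $t_n\to t$ it shows $\overline\Phi_n(t_n;x,Z)\to\overline\Phi(t;x,Z)$ in $\mathcal H^p_\mathcal{P}(H_B)$ (this is \eqref{eq:2015-09-28:11}), and then \cite[Theorem 7.1.5]{DaPrato2004} yields $\overline X_n^{t_n,x}\to\overline X^{t,x}$ (\eqref{eq:2015-09-28:20}); this, combined with the uniform-in-$n$ Lipschitz bound in $x$ and the continuity of $(t,x)\mapsto\overline X^{t,x}$ from Proposition \ref{prop:2015-06-08:05}, does give uniform convergence on compacts of $[0,T]\times H_B$. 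The repair is minor --- the very tools you list (strong convergence $e^{t\overline A_n}\to\overline S_t$ uniformly for $t$ in compacts, uniform sublinear growth, Burkholder--Davis--Gundy, dominated convergence) prove \eqref{eq:2015-09-28:11} just as well with $t_n$ varying along the sequence --- but as written your reduction step is invalid and the diagonal convergence is the missing idea.
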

\begin{proof}
\emph{(\ref{2015-09-28:07})} 
Let $(t,x)\in[0,T]\times H$.
Since $A_n=\overline A_n$ on $H$, we have $e^{sA_n}=e^{s\overline A_n}$ on $H$ for all $s\in\mathbb{R}^+$. Recalling that 
$\mathcal{H}^p_\mathcal{P}(H)$ is continuously embedded in $\mathcal{H}^p_\mathcal{P}(H_B)$, 
we then have that the mild solution $X^{t,x}_n$ is also a mild solution of \eqref{eq:2015-06-21:00} in $\mathcal{H}^p_\mathcal{P}(H_B)$. By uniqueness we conclude that $X_n^{t,x}=\overline X^{t,x}_n$ in $\mathcal{H}^p_\mathcal{P}(H_B)$.

\emph{(\ref{2015-09-28:08})} For $t\in [0,T]$, $x\in H_B$, $n\geq 1$ , similarly to what was done in the proof of Proposition \ref{2015-09-27:11}, we  define the maps
$$
\overline \Phi(t;\cdot,\cdot)\colon H_B\times \mathcal{H}^p_\mathcal{P}(H_B)\rightarrow \mathcal{H}^p_\mathcal{P}(H_B)
$$
by
$$
\overline \Phi(t;x,Z)_s\coloneqq 
  \begin{dcases}
  x&s\in[0,t)\\
  \overline S_{s-t}x+\int_t^s \overline S_{s-w}
\overline G(w,Z_w) dw +\int_t^s \overline S_{s-w}\overline \Sigma(w,Z_w) dW_w&s\in[t,T]
\end{dcases}
$$
and
$$
\overline \Phi_n(t;\cdot,\cdot)\colon H_B\times \mathcal{H}^p_\mathcal{P}(H_B)\rightarrow \mathcal{H}^p_\mathcal{P}(H_B)
$$
by
$$
\overline \Phi_n(t;x,Z)_s\coloneqq 
  \begin{dcases}
  x&s\in[0,t)\\
  e^{(s-t)\overline A_n}x+\int_t^s e^{(s-w)\overline A_n}
\overline G_n(w,Z_w) dw +\int_t^s e^{(s-w)\overline A_n}\overline \Sigma_n(w,Z_w) dW_w&s\in[t,T].
\end{dcases}
$$
The mild solutions $\overline X^{t,x}$ and $\overline X^{t,x}_n$ are the fixed points  of $\overline \Phi(t;x,\cdot) $ and $\overline \Phi_n(t;x,\cdot)$  respectively.
 Since the operators $\overline A_n$, $n\geq 1$, are the Yosida approximations of $\overline A$, they generate semigroups of contractions
on $H_B$.
%In particular,
%\begin{equation}
 % \label{eq:2015-09-28:10}
%  b\coloneqq \sup_{\substack{n\geq 1\\t\in[0,T]}}|e^{t\overline A_n}|_{L(H_B)}<+\infty.
%\end{equation}
Recalling \eqref{2015-09-24:07} and \eqref{2015-09-24:08}, and arguing for $\overline \Phi$ and $\overline \Phi_n$ as in the proof of 
Proposition \ref{2015-09-27:11}
for $\Phi$, we find $\gamma>0$, depending only on 
$p$, $T$, $M$,
 such that 
 \begin{equation}\label{2015-09-28:16}
   \sup_{(t,x)\in[0,T]\times H}|\overline \Phi(t;x,Z)-\overline \Phi(t;x,Z')|_{\mathcal{H}^p_\mathcal{P}(H_B),\gamma}\leq \frac{1}{2}
|Z-Z'|_{\mathcal{H}^p_\mathcal{P}(H_B),\gamma} \ \ \ \forall Z,Z'\in \mathcal{H}^p_\mathcal{P}(H_B)
\end{equation}
and
 \begin{equation}\label{2015-09-28:17}
   \sup_{\substack{(t,x)\in[0,T]\times H\\n\in \mathbb{N}}}|\overline \Phi_n(t;x,Z)-\overline \Phi_n(t;x,Z')|_{\mathcal{H}^p_\mathcal{P}(H_B),\gamma}\leq \frac{1}{2}
|Z-Z'|_{\mathcal{H}^p_\mathcal{P}(H_B),\gamma} \ \ \ \forall Z,Z'\in \mathcal{H}^p_\mathcal{P}(H_B).
\end{equation}
Let $\{t_n\}_{n\geq 1}$ be a sequence converging to $t$ in $[0,T]$.
We claim that
\begin{equation}
  \label{eq:2015-09-28:11}
  \lim_{n\rightarrow +\infty}\overline \Phi_n(t_n;x,Z)=\overline \Phi(t;x,Z)
\  \mathrm{in}\ \mathcal{H}^p_\mathcal{P}(H_B),
\ \forall (x,Z)\in H_B\times \mathcal{H}^p_\mathcal{P}(H_B).
\end{equation}
Once \eqref{eq:2015-09-28:11} is proved, we can conclude, again invoking \cite[Theorem 7.1.5]{DaPrato2004}, that 
\begin{equation}
  \label{eq:2015-09-28:20}
  \lim_{n\rightarrow +\infty}\overline X^{t_n,x}_n=\overline X^{t,x}
 \  \mathrm{in}\ \mathcal{H}^p_\mathcal{P}(H_B),\ \forall x\in H_B.
\end{equation}
But \eqref{eq:2015-09-28:11} % the convergence of $\Phi_n(t_n;x,Z)$ to $\Phi(t;x,Z)$ in $\mathcal{H}^p_\mathcal{P}(H_B)$
is easily obtained by combining strong convergence of $e^{t\overline A_n}$ to $\overline S_t$ uniformly for $t\in[0,T]$, sublinear growth of $\overline G_n(t,x)$ and $\overline \Sigma_n(t,x)$ in $x$ uniformly on $t\in[0,T]$ and $n\geq 1$ (obtained by \eqref{2015-09-24:07}, \eqref{2015-09-24:08}, \eqref{eq:2015-09-24:02}, \eqref{eq:2015-09-24:02b}, and by continuity of $\overline G(\cdot,0)$ and of $\overline \Sigma(\cdot,0)$), Burkholder-Davis-Gundy's inequality, Lebesgue's dominated convergence theorem, and pointwise convergence of $\{\overline G_n\}_{n\in \mathbb{N}}$ to $\overline G$ and of $\{\overline \Sigma_n\}_{n\in \mathbb{N}}$ to $\overline \Sigma$.

 By linearity of $\overline \Phi(t;x,Z)$ and $\overline \Phi_n(t;x,Z)$ in $x$, we have
 \begin{equation}
   \label{2015-09-28:12}
   \sup_{(t,Z)\in[0,T]\times \mathcal{H}^p_\mathcal{P}(H_B)}|\overline \Phi(t;x,Z)-\overline \Phi(t;x',Z)|_{\mathcal{H}^p_\mathcal{P}(H_B),\gamma}\leq
|x-x'|_{H_B} \ \ \ \forall x,x'\in H_B.
\end{equation}
and
 \begin{equation}
   \label{2015-09-28:15}
   \sup_{\substack{(t,Z)\in[0,T]\times \mathcal{H}^p_\mathcal{P}(H_B)\\n\in \mathbb{N}}}|\overline \Phi_n(t;x,Z)-\overline \Phi_n(t;x',Z)|_{\mathcal{H}^p_\mathcal{P}(H_B),\gamma}\leq
|x-x'|_{H_B} \ \ \ \forall x,x'\in H_B.
\end{equation}
Thus, using
\eqref{2015-09-28:16},
\eqref{2015-09-28:17}, and \cite[inequality ($***$) on p.\  13]{Granas2003},  
we have
 \begin{equation}\label{2015-09-28:18}
   \sup_{t\in[0,T]}|\overline X^{t,x}-\overline X^{t,x'}
|_{\mathcal{H}^p_\mathcal{P}(H_B),\gamma}\leq
2|x-x'|_{H_B} \ \ \ \forall x,x'\in H_B.
\end{equation}
and
 \begin{equation}\label{2015-09-28:19}
   \sup_{\substack{t\in[0,T]\\n\in \mathbb{N}}}|\overline X_n^{t,x}-\overline X_n^{t,x'}
|_{\mathcal{H}^p_\mathcal{P}(H_B),\gamma}\leq
2|x-x'|_{H_B} \ \ \ \forall x,x'\in H_B.
\end{equation}
Now \eqref{eq:2015-09-28:20}, \eqref{2015-09-28:18},  \eqref{2015-09-28:19}, and the continuity of $\overline{X}^{t,x}$ in $t$ 
(Proposition \ref{prop:2015-06-08:05}),
 yield the convergence of $\overline X^{t,x}_n$ to $\overline X^{t,x}$ in $\mathcal{H}^p_\mathcal{P}(H_B)$ as $n\rightarrow +\infty$, uniformly for $(t,x)$ on compact subsets of $[0,T]\times H_B$.

\emph{(\ref{2015-09-28:09})}
% Suppose that 
%  Assumptions
%  \ref{ipot:B-continuity-FG}-(ii) is satisfied. 
Let $n\geq 1$.
 By \cite[p.\ 243, Th.\ 9.8]{DaPrato2014}, for every $t\in[0,T]$, the map
\eqref{eq:2015-09-29:00}
 is G\^ateaux differentiable and, for every $x,y\in H_B$, the directional derivate $\nabla_x \overline X_n^{t,x}y$ is the  unique fixed point of  $\Psi_n(t,x;y,\cdot)$, where $\Psi_n(t,\cdot;\cdot)$ is defined by
$$
\Psi_n(t,x;\cdot,\cdot)\colon H_B\times \mathcal{H}^p_\mathcal{P}(H_B)
\rightarrow
\mathcal{H}^p_\mathcal{P}(H_B),
$$
$$
\hskip-10pt
\Psi_n(t,x;y,Z)\coloneqq
\begin{dcases}
  y&s\in[0,t)\\
  \overline S_{s-t}y+\int_t^s\overline S_{s-w}
  \nabla_x \overline G_n(w,\overline X_ {n,w}^{t,x})Z_w dw +\int_t^s \overline S_{s-w}\nabla_x\overline \Sigma_n(w,\overline X_ {n,w}^{t,x})Z_w dW_w&s\in[t,T].
\end{dcases}
$$
To show strong continuity and uniform boundedness of $\nabla_x \overline X_n$, we argue similarly as in the proof of Proposition
\ref{2015-09-27:11}.
By
\eqref{2015-09-24:07} and \eqref{2015-09-24:08},
\begin{gather}
      |\nabla_x \overline G_n(t,x)y|_B\leq M|y|_B\\
|\nabla_x \overline \Sigma_n(t,x)y|_{L(\mathbb{R}^m,H_B)}\leq M|y|_B,
\end{gather}
for all $(t,x,y)\in [0,T]\times H_B\times H_B$.
Then, by linearity of $\Psi_n(t,x;\cdot,\cdot)$ and by 
 \cite[Ch.\ 7, Proposition 7.3.1]{DaPrato2004},
there exists $\gamma>0$, depending only on 
$p$, $T$, $M$, $b$, %$\sup_{t\in [0,T]}|\overline S_t|_{L(H)}$,
 such that 
 \begin{multline}\label{2015-09-28:01}
 %  \begin{split}
        \sup_{(t,x)\in[0,T]\times H}|\Psi_n(t,x;y,Z)-\Psi_n (t,x;y,Z')|_{\mathcal{H}^p_\mathcal{P}(H),\gamma}=
   \sup_{(t,x)\in[0,T]\times H}|\Psi_n (t,x;0,Z-Z')|_{\mathcal{H}^p_\mathcal{P}(H),\gamma}\\
\leq \frac{1}{2}
|Z-Z'|_{\mathcal{H}^p_\mathcal{P}(H),\gamma} \ \ \ \forall Z,Z'\in \mathcal{H}^p_\mathcal{P}(H).
%\end{split}
\end{multline}
We also have 
\begin{equation}
  \label{2015-09-28:00}
  \sup_{(t,x,Z)\in [0,T]\times H_B\times \mathcal{H}_\mathcal{P}^p(H_B)}
|\Psi_n (t,x;y,Z)-\Psi_n (t,x;y',Z)|_{\mathcal{H}^p_\mathcal{P}(H_B),\gamma}\leq  |y-y'|_{H_B}%\ \ \ \forall .
\end{equation}
for all $y,y'\in H_B$.
By \eqref{2015-09-28:01}, \eqref{2015-09-28:00}, and 
\cite[inequality ($***$) on p.\ 13]{Granas2003},
 % \cite[Lemma 3.5-(ii)]{Cosso},
 we thus obtain
 \begin{equation}
   \label{eq:2015-09-28:02} 
   \sup_{(t,x)\in [0,T]\times H_B}|\nabla_x \overline X_n^{t,x}y|_{\mathcal{H}^p_\mathcal{P}(H_B),\gamma}\leq 2|y|_{H_B}\ \ \ \ \forall y\in H_B.
 \end{equation}
Hence $\nabla_x\overline X_n^{t,x}$ is bounded in $L \left( H_B,\mathcal{H}^p_\mathcal{P}(H_B) \right) $, uniformly for $(t,x)\in[0,T]\times H_B$ and
$n\geq 1$.

Let % now $y\in H_B$, and l
  now $\{(t_k,x_k)\}_{k\in \mathbb{N}}\subset [0,T]\times H_B$ be a sequence  converging to $(t,x)\in[0,T]\times H_B$.
We claim that
\begin{equation}
  \label{2015-09-28:03}
  \lim_{k\rightarrow +\infty}\Psi_n (t_k,x_k;y,Z)=\Psi_n (t,x;y,Z)\ \ \ \ \forall (y,Z)\in H_B\times \mathcal{H}^p_\mathcal{P}(H_B).
\end{equation}
Once \eqref{2015-09-28:03} is proved, using 
\eqref{2015-09-28:01} and applying \cite[Theorem 7.1.5]{DaPrato2004}, we obtain
$$
\lim_{k\rightarrow +\infty}\nabla_x \overline X_n^{t_k,x_k}y=
\nabla_x \overline X_n^{t,x}y\ \ \ \ \forall y\in H_B,
$$
which provides the strong continuity of $\nabla_x\overline X_n^{t,x}$.
Recalling that $\lim_{k\rightarrow +\infty}\overline X_n^{t_k,x_k}=\overline X_n^{t,x}$ in $\mathcal{H}^p_\mathcal{P}(H_B)$, 
we can consider
 a subsequence, again denoted by $\{(t_k,x_k)\}_{k\in \mathbb{N}}$, 
such that $\lim _{n\rightarrow +\infty}\overline X_n^{t_k,x_k}=\overline X_n^{t,x}$ $ \mathbb{P} \otimes  dt$-a.e.\ 
on $\Omega_T$. Then \eqref{2015-09-28:03} is obtained by 
applying Lebesgue's dominated convergence theorem, together with Burkholder-Davis-Gundy's inequality, for the stochastic integral.

\emph{(\ref{2015-09-28:09b})} This follows immediately from \eqref{eq:2015-09-28:02}.
\end{proof}

%  Going back to the setting of 
% \eqref{eq:2013-02-18:ac}, observe that
% %{\par\nobreak\noindent} %Needed befor a math environment, to let numbering of lines work properly on the preceding paragraph.
% $$
% \Vert S_tx\Vert ^2=(1+t)|x_0|^2+\Vert x_1\Vert^2\leq \e^t\Vert
% x\Vert^2
% $$
% and then $t\mapsto \e^{-t/2}S_t$ is a strongly continuous semigroup of
% contractions, whose infinitesimal generator is $A-1/2$.  By
% Lumer-Phillips' Theorem (see \cite{Pazy1983}), $R(\lambda-A)=H$ for
% every $\lambda>1/2$, and $\lambda-A$ is monotone, and hence is maximal
% monotone (see \cite{Br'ezis1973}).  One can  explicitly compute
% $(\lambda-A)^{-1}$:
% %{\par\nobreak\noindent} %Needed befor a math environment, to let numbering of lines work properly on the preceding paragraph.
% $$
% \left(\lambda-A\right)^{-1}(x_0,x_1)=\left(\lambda^{-1}x_0,\e^{\lambda
%     \cdot}\left(\lambda^{-1}x_0-\int_0^\cdot\e^{-\lambda s}x_1(s)\de
%     s\right)\right)\qquad\forall (x_0,x_1)\in H.
% $$

\vskip10pt
We will make a particular choice of $R$ and thus $B$.
Recall that
$(0,+\infty)$ is contained in the resolvent set of $A$ (and hence of $A^*$). 
For  $\lambda>0$, let $A_\lambda\coloneqq A-\lambda$, $A^*_\lambda\coloneqq A^*-\lambda=(A-\lambda)^*$. If $R=A_\lambda$, then 
%both 
\eqref{eq:2015-09-28:04},
\eqref{2016-05-19:03}, and
\eqref{2016-05-19:04},
 % and \eqref{eq:2015-09-28:05}{\color{red}CHANGE}
 are satisfied.
 % Since $S_tA_\lambda\subset A_\lambda S_t$ for all $t\in \mathbb{R}^+$,
 % we have that \eqref{eq:2015-09-28:04} is satisfied when $R=A_\lambda$, and \eqref{eq:2015-06-04:04} are satisfied too.
 We can then apply all of the above arguments with
\[
  B=B_{A,\lambda}\coloneqq   (A_\lambda^*)^{-1}A_\lambda^{-1}.
\]
Notice that
$$
|x|_{B_{A,\lambda}}\leq \left(1+|\lambda-\lambda'||A_\lambda^{-1}|_{L(H)}\right)|x|_{B_{A,\lambda'}}\qquad \forall  \lambda,\lambda'\in (0,+\infty),\, x\in H,
$$
hence the norms
$|\cdot|_{B_{A,\lambda}}$ and $|\cdot|_{B_{A,\lambda'}}$ are
equivalent.  We will thus pick $\lambda=1$ and from now on we set
\begin{equation}
  \label{eq:Blambda1}
  B\coloneqq B_{A,1}= (A_1^*)^{-1}A_1^{-1}.
\end{equation}
We observe that with this choice of $B$ we have
\[
 |x|_B=|(\overline A-I)^{-1}x|_H \quad\text{for all}\,\,x\in H_B,
\]
and
\[
 \langle x,y\rangle_B=\langle(\overline A-I)^{-1}x,(\overline A-I)^{-1}y\rangle \quad\text{for all}\,\,x,y\in H_B.
\]
In particular
\[
 \langle x,y\rangle_B=\langle (A^*-I)^{-1}(\overline A-I)^{-1}x,y\rangle \quad\text{if}\,\,x\in H_B,y\in H.
\]

\section{Viscosity solutions of Kolmogorov PDEs in Hilbert spaces with finite-dimensional second-order term}
\label{Sec:visc}

We remind that throughout the rest of the paper $B$ is defined by  \eqref{eq:Blambda1}. For this $B$, Assumptions~\ref{ass:2015-06-04:01} 
and \ref{ipot:B-continuity-FG} will be standing for the remaining part of the manuscript, 
$\{ G_n\}_{n\in \mathbb{N}}$,
$\{ \Sigma_n\}_{n\in \mathbb{N}}$,
 $\{\overline G_n\}_{n\in \mathbb{N}}$,
$\{\overline \Sigma_n\}_{n \mathbb{N}}$ denote the sequences introduced in Proposition
\ref{ss:2015-06-20:00}, the operators $A_n,n\geq 1$ are the Yosida approximations of $A$, and  $X_n^{t,x}$, $\overline X^{t,x}_n$ are respectively the mild solutions of \eqref{eq:2015-06-23:08},
\eqref{eq:2015-06-21:00}, with $B=B_{A,1}$, $n\geq 1$.
We recall that, by Proposition \ref{prop:2015-06-20:03},
$X^{t,x}=\overline X^{t,x}$ and $X_n^{t,x}=\overline X_n^{t,x}$ for every $(t,x)\in [0,T]\times H$, $n\geq 1$.

\subsection{Existence and uniqueness of solution}
The following assumption
will be standing for the remaining part of the work.
\begin{assumption}
  \label{ass:2015-06-21:03}
The function $h\colon H_{B}\rightarrow \mathbb{R}$ is such that there is a constant $M\ge 0$ such that
\begin{equation}\label{eq:hcontB}
 |h(x)-h(y)|\leq M|x-y|_B\quad
\forall x,y\in H.
\end{equation}
 \end{assumption}
The function $h$ extends uniquely to $\overline h:H_B\rightarrow \mathbb{R}$ which also satisfies \eqref{eq:hcontB}.
Taking the inf-sup convolutions of $\overline h$ in $H_B$ (see \cite{FGS,LL}) we can obtain a sequence of functions 
$\{\overline h_n\}_{n\in \mathbb{N}}\subset  C^1_b(H_{B})$ 
(and even more regular) such that
\begin{equation}\label{eq:supinfconvappr}
\sup_{\substack{n\in \mathbb{N}\\x\in H_{B}}}|D\overline h_n(x)|_{B}<+\infty
\qquad\mbox{and}\qquad
\lim_{n\rightarrow +\infty}\sup_{x\in H_B}\left|\overline h(x)-\overline h_n(x)\right|=0.
\end{equation}
The restriction of $\overline h_n$ to $H$ will be denoted by $h_n$.

We define the functions
\begin{align}
&  u\colon [0,T]\times H\to \mathbb{R},\ (t,x)\mapsto \mathbb{E}
  \left[h(X^{t,x}_{T})\right],\label{eq:2015-01-16:00}\\
 & u_n\colon [0,T]\times H\to \mathbb{R},\ (t,x)\mapsto \mathbb{E}
  \left[h_n(X^{t,x}_{n,T})\right],\  \ \ \ n\geq 1.\label{eq:2015-01-16:00a}
\end{align}
By sublinear growth of $h$ and $h_n$, $u$ and $u_n$ are well defined.
Each of the above functions has an associated Kolmogorov equation in $(0,T]\times H$. However we will only need to consider
the equation satisfied by $ u_n$. We also define
\[
 \overline{u}_n\colon [0,T]\times H_B\to \mathbb{R},\ (t,x)\mapsto \mathbb{E}
  \left[\overline h_n(\overline X^{t,x}_{n,T})\right],\  \ \ \ n\geq 1.
\]
We observe that $u_n=\overline{u}_{n|[0,T]\times H}$.

\begin{proposition}\label{prop:2015-06-21:02}
Let $p\geq 2$. Then:
\begin{enumerate}[(i)]
\item\label{2015-09-29:01}
$u_n$ is uniformly continuous on bounded sets of $[0,T]\times (H,|\cdot|_B)$ and, for every $t\in [0,T]$, $u_n(t,\cdot)$
is $|\cdot|_B$-Lipschitz continuous, with a Lipschitz constant uniform in $t\in [0,T]$ and $n\geq 1$.
\item\label{2015-09-29:02}  % For every $n\geq 1$,
  The sequence $\{u_{n}\}_{n\geq 1}$ converges to $u$ uniformly on compact sets of $[0,T]\times H$.
% , and
% the sequence $\{u_n\}_{n\in \mathbb{N}}$ converges to $u$ uniformly on compact sets of $[0,T]\times H$ {\color{red}(in $H_B$ s\`i, ma non mi pare che converga in $H$, dato che i coeffcienti non sono unif lip.}.
\item\label{2015-09-29:03}  For every $n\geq 1$, $u_n\in \mathcal{G}_s^{0,1}([0,T]\times H)$, and
  \begin{align}
&\sup_{\substack{(t,x)\in [0,T]\times H\\n\geq 1}}
|\nabla_x u_{n}(t,x)|_{H}<+\infty    \label{eq:2015-06-21:07},\\
&\sup_{\substack{(t,x)\in [0,T]\times H\\n\geq 1}}\left|A_n^*\nabla_x u_{n}(t,x)\right|_H<+\infty.
\label{2015-09-30:01}
%\\
%&\sup_{\substack{(t,x)\in [0,T]\times K\\n\geq 1}}\left|\langle G_n (t,x),\nabla_x u_{n}(t,x)\rangle\right|<+\infty,\label{2015-09-30:02}
  \end{align}
\end{enumerate}
\end{proposition}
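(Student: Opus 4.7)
The plan is to exploit the representation $u_n(t,x) = \mathbb{E}[h_n(X^{t,x}_{n,T})]$, the identification $X^{t,x}_n = \overline X^{t,x}_n$ on $H$ given by Proposition~\ref{prop:2015-06-20:03}-\emph{(\ref{2015-09-28:07})}, and the extension $\overline u_n(t,x) := \mathbb{E}[\overline h_n(\overline X^{t,x}_{n,T})]$ on $[0,T]\times H_B$, which satisfies $\overline u_n|_{[0,T]\times H} = u_n$. Let $K := \sup_{n,x}|D\overline h_n(x)|_B < +\infty$ from \eqref{eq:supinfconvappr}. For \emph{(\ref{2015-09-29:01})}, the mean value theorem together with Proposition~\ref{prop:2015-06-20:03}-\emph{(\ref{2015-09-28:09b})} yields $\mathbb{E}|\overline X^{t,x}_{n,T} - \overline X^{t,y}_{n,T}|_B \leq 2|x-y|_B$ uniformly in $t$ and $n$, so
\[
|u_n(t,x) - u_n(t,y)| \leq K\,\mathbb{E}|\overline X^{t,x}_{n,T} - \overline X^{t,y}_{n,T}|_B \leq 2K|x-y|_B.
\]
Joint continuity of $u_n$ in $(t,x)$ then follows from the continuity of $(t,x)\mapsto \overline X^{t,x}_n$ in $\mathcal H^p_\mathcal{P}(H_B)$ given by Proposition~\ref{prop:2015-06-08:05} applied in $H_B$, and combined with the uniform-in-$t$ Lipschitz bound in $x$ it upgrades to uniform continuity on bounded subsets of $[0,T]\times(H,|\cdot|_B)$.

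For \emph{(\ref{2015-09-29:02})}, I split
\[
|u_n(t,x)-u(t,x)| \leq \mathbb{E}|h_n(X^{t,x}_{n,T}) - h(X^{t,x}_{n,T})| + \mathbb{E}|h(X^{t,x}_{n,T}) - h(X^{t,x}_T)|.
\]
The first term is bounded by $\sup_{H_B}|\overline h_n - \overline h|\to 0$ from \eqref{eq:supinfconvappr}; the second is at most $M\,\mathbb{E}|\overline X^{t,x}_{n,T} - \overline X^{t,x}_T|_B$, which tends to zero uniformly on compact sets of $[0,T]\times H$ by Proposition~\ref{prop:2015-06-20:03}-\emph{(\ref{2015-09-28:08})}.

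For \emph{(\ref{2015-09-29:03})}, combining Proposition~\ref{prop:2015-06-20:03}-\emph{(\ref{2015-09-28:09})} with $\overline h_n\in C^1_b(H_B)$, the chain rule and dominated convergence give
\[
\nabla_x u_n(t,x)\,y \;=\; \mathbb{E}\big[\langle D\overline h_n(\overline X^{t,x}_{n,T}),\, \nabla_x \overline X^{t,x}_{n,T}\,y\rangle_B\big] \qquad (y\in H),
\]
together with strong continuity of $\nabla_x u_n$ in $(t,x)$ (the chain-rule formula makes sense for $y\in H$ as well as $y\in H_B$, but we need the restriction to $H$ to identify $\nabla_x u_n(t,x)$ as an element of $H^\ast$). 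Cauchy--Schwarz in $H_B$, together with \eqref{eq:supinfconvappr} and Proposition~\ref{prop:2015-06-20:03}-\emph{(\ref{2015-09-28:09b})}, yields $|\nabla_x u_n(t,x)y| \leq 2K|y|_B \leq 2K|(A-I)^{-1}|_{L(H)}|y|_H$, which is \eqref{eq:2015-06-21:07}.

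The heart of the argument, and the main obstacle, is \eqref{2015-09-30:01}. By duality, $\langle A_n^\ast\nabla_x u_n(t,x),y\rangle_H = \nabla_x u_n(t,x)(A_n y)$, so reusing the previous estimate with $A_n y$ in place of $y$ reduces the matter to bounding $|A_n y|_B = |(A-I)^{-1}A_n y|_H$ uniformly in $n$. Writing $A_n = AJ_n$ with $J_n := n(n-A)^{-1}$ (a contraction on $H$), and using that $J_n$ commutes with $(A-I)^{-1}$ (both are functions of $A$ via the resolvent) together with the identity $A(A-I)^{-1} = I + (A-I)^{-1}$, one computes
\[
(A-I)^{-1} A_n y \;=\; A(A-I)^{-1} J_n y \;=\; J_n y + (A-I)^{-1} J_n y,
\]
so that $|(A-I)^{-1}A_n y|_H \leq \big(1 + |(A-I)^{-1}|_{L(H)}\big)|y|_H$ with no dependence on $n$. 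This commutation cancellation between the Yosida approximations and the resolvent defining $B$ is exactly what yields the uniform-in-$n$ bound~\eqref{2015-09-30:01}.
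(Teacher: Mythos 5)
The main issue is in part \emph{(i)}: your claimed ``upgrade'' from joint continuity of $(t,x)\mapsto\overline X^{t,x}_n$ plus the uniform-in-$t$ Lipschitz bound in $x$ to uniform continuity on bounded subsets of $[0,T]\times(H,|\cdot|_B)$ does not go through. The Lipschitz bound reduces the problem to showing that $t\mapsto u_n(t,x)$ is continuous \emph{with a modulus independent of $x$} as $x$ ranges over a $|\cdot|_B$-bounded set; but such a set is not compact in $H_B$, so pointwise (even joint) continuity of $(t,x)\mapsto\overline X^{t,x}_n$ in $\mathcal{H}^p_\mathcal{P}(H_B)$ provides no uniformity over it --- continuity upgrades to uniform continuity on compact, not merely bounded, sets. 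The paper fills exactly this hole quantitatively: since $\overline A_n$ is bounded, $\overline X^{t,x}_n$ is a \emph{strong} solution of \eqref{eq:2015-06-21:00}, and standard estimates together with Gronwall's lemma yield $\mathbb{E}\bigl[|\overline X^{t,x}_{n,T}-\overline X^{t',x}_{n,T}|_B^2\bigr]\le C_1e^{C_2T}(t'-t)$ uniformly over $|x|_B\le r$, with $C_1,C_2$ depending on $n$ through $|\overline A_n|_{L(H_B)}$ --- which is harmless, since \emph{(i)} asserts uniform continuity for each fixed $n$, only the Lipschitz constant in $x$ being uniform in $n$. You need this, or some equivalent equicontinuity-in-$t$ estimate uniform over the bounded set, to conclude.

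Parts \emph{(ii)} and \emph{(iii)} are correct. Your route to \eqref{2015-09-30:01} is a genuine, if small, variant: you dualize, writing $\langle A_n^*\nabla_xu_n(t,x),y\rangle=\nabla_xu_n(t,x)(A_ny)$, and bound $|A_ny|_B\le\bigl(1+|(A-I)^{-1}|_{L(H)}\bigr)|y|_H$ uniformly in $n$ via $A_n=AJ_n$, the commutation of $J_n=n(n-A)^{-1}$ with $(A-I)^{-1}$, and the identity $A(A-I)^{-1}=I+(A-I)^{-1}$ --- all of which is valid. The paper instead extracts the representation $\nabla_xu_n(t,x)=(A^*-1)^{-1}(\overline A-1)^{-1}\nabla_x\overline u_n(t,x)$ from the Riesz identification of the $B$-inner product, deduces $\sup_{n,(t,x)}|A^*\nabla_xu_n(t,x)|_H<+\infty$, and then applies the contraction $n(n-A)^{-1}$. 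Both arguments exploit the same cancellation between the Yosida approximation and the resolvent defining $B$; yours avoids the explicit Riesz-representation step, while the paper's yields as a byproduct the stronger uniform bound on $A^*\nabla_xu_n$ itself.
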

\begin{proof}
\emph{(\ref{2015-09-29:01})} 
From \eqref{eq:supinfconvappr} and Proposition~\ref{prop:2015-06-20:03}-\emph{(\ref{2015-09-28:07})},\emph{(\ref{2015-09-28:09})},\emph{(\ref{2015-09-28:09b})}, it follows that $u_n$  is continuous and  $|\cdot|_B$-Lipschitz continuous in $x$ with a Lipschitz constant uniform in $t\in[0,T]$ and $n\geq 1$. 

The uniform continuity of $u_n$ on bounded sets is standard since we are dealing with bounded evolution and can be deduced from a more
general result, see e.g.\ \cite[Theorem~9.1]{DaPrato2014},
however we present a short argument.
We first notice that it follows
from
Proposition~\ref{prop:2015-06-20:03}-\emph{(\ref{2015-09-28:09})},\emph{(\ref{2015-09-28:09b})} that, for any $r>0$ and $n\geq 1$, there exists $K>0$ such that
$$
 \big|\overline X^{t,x}_n\big|_{\mathcal{H}^2_\mathcal{P}(H_B)}\leq K \qquad \forall t\in[0,T],\ \forall x\in H_B,\ |x|_B\leq r.
$$
Secondly, we recall that, for $t\in[0,T]$ and $x\in H_B$, $\overline X^{t,x}_n$ is a strong solution to
\eqref{eq:2015-06-21:00}, because $\overline A_n$ is bounded (see  footnote \ref{2016-05-18:00} on p.\ \pageref{2016-05-18:00}).
Then
if $0\leq t\leq t'\leq T$ and $x\in H_B$, $|x|_B\leq r$, for some constants $C_1$, $C_2$ depending only on $T$, $K$, $|\overline A_n|_{L(H_B)}$, and on the Lipschitz and the linear-growth constants of $\overline G_n$ and $\overline \Sigma_n$, by standard estimates we have
\begin{equation*}
  \mathbb{E} \left[ \left|\overline X^{t,x}_{n,s}-
\overline X^{t',x}_{n,s}
\right|_B ^2\right]\leq C_1 % \left( 1+|\overline A_n|^2_{L(H_B)} \right)
(t'-t)+
C_2 % \left( 1+|\overline A_n|^2_{L(H_B)} \right)
\int_{t'}^s\mathbb{E} \left[ \left|\overline X^{t,x}_{n,w}-\overline X^{t',x}_{n,w}\right|^2_B \right] dw\qquad \forall s\in [t',T].
% \leq & 3 
% \mathbb{E} \left[ \int_t^{t'}
% \overline A_n\overline X_{n,s} + \overline G_n\left(s,\overline X_{n,s}\right)\right) \de s+ \overline \Sigma_n
%      \left(s,\overline X_{n,s}\right) \de W_s \qquad 0\leq t < s\leq T
\end{equation*}
By Gronwall's lemma, the inequality above provides
\begin{equation}
  \label{eq:2015-11-20:01}
  \mathbb{E} \left[ \left|\overline X^{t,x}_{n,T}-
\overline X^{t',x}_{n,T}
\right|_B ^2\right]\leq C_1e^{C_2T}(t'-t).
\end{equation}
The uniform continuity of $u_n$ on $[0,T]\times \{x\in H\colon |x|_B\leq r\}$ is then obtained by
\eqref{eq:supinfconvappr},
 \eqref{eq:2015-11-20:01}, and by the $|\cdot|_B$-Lipschitz continuity 
of $\overline X^{t,x}_n$ in $x$ 
with a Lipschitz constant uniform in $t\in[0,T]$.

\emph{(\ref{2015-09-29:02})}
Part \emph{(\ref{2015-09-29:02})} is a consequence of
Proposition \ref{prop:2015-06-20:03}-\emph{(\ref{2015-09-28:07})},\emph{(\ref{2015-09-28:08})}
 % easily follows from Proposition \ref{prop:2015-06-20:03}(i)-(ii) 
 and (\ref{eq:supinfconvappr}).

\emph{(\ref{2015-09-29:03})}
Let $n\geq 1$. 
By \cite[Ch.\ 7, Proposition 7.3.3]{DaPrato2004}, the map
$$
\Xi_n:\mathcal{H}^p_\mathcal{P}(H_B)\rightarrow \mathcal{H}^p_\mathcal{P}(H_B),\ Z \mapsto \overline h_n(Z)
$$
belongs to $\mathcal{G}^1_s(\mathcal{H}^p_\mathcal{P}(H_B),\mathcal{H}^p_\mathcal{P}(H_B))$, and
\begin{equation}
  \label{eq:2015-09-29:05}
  (\nabla_Z\Xi_n(Z)Y)_t=D\overline h_n(Z_t)Y_t \ \ \ \ \forall t\in[0,T],\ \forall Z,Y\in \mathcal{H}^p_\mathcal{P}(H_B).
\end{equation}
 By 
Proposition \ref{prop:2015-06-20:03}-\emph{(\ref{2015-09-28:09})}, 
linearity and continuity of the expected valued $\mathbb{E}$ on $L^p_\mathcal{P}(H_B)$, linearity and continuity of 
the $T$-evaluation map
$\mathcal{H}^p_\mathcal{P}(H_B)\rightarrow L^p(H_B),\ Z \mapsto  Z_T$,
formula \eqref{eq:2015-09-29:05},
 composition % (on the left) of Fr\'echet differentiable functions with a G\^ateaux differntiable function,
of strongly continuously G\^ateaux differentiable functions,
 we obtain $\overline u_n\in \mathcal{G}^{0,1}_s([0,T]\times H_B)$ and
 \begin{equation}
\langle \nabla_x \overline u_n(t,x),y\rangle_B=\mathbb{E} \left[ D\overline h_n(\overline X^{t,x}_{n,T})
\left( \nabla_x\overline X_n^{t,x}y \right) _T \right]\ \ \ \ \ \forall (t,x,y)\in[0,T]\times H_B\times H_B.   
\label{eq:2015-09-29:04}
 \end{equation}
By 
Proposition \ref{prop:2015-06-20:03}-\emph{(\ref{2015-09-28:09b})},
\eqref{eq:supinfconvappr}, \eqref{eq:2015-09-29:04},
\begin{equation}
  \label{eq:2015-09-29:06}
   \sup_{\substack{(t,x)\in[0,T]\times H_B\\n\geq 1}}
|\nabla_x \overline u_{n}(t,x)|_B <+\infty.
\end{equation}
By continuous embedding $H\rightarrow H_B$ and by \eqref{eq:2015-09-29:06}
 we have also
\begin{equation}\label{2015-09-29:08}
u_n\in \mathcal{G}^{0,1}_s([0,T]\times H),\qquad\ \ \ \   \sup_{\substack{(t,x)\in[0,T]\times H\\n\geq 1}}
|\nabla_x  u_{n}(t,x)|_H <+\infty,
\end{equation}
which shows \eqref{eq:2015-06-21:07}. Moreover, since 
\[
  \nabla_xu_n(t,x)=(A^*-1)^{-1}(\overline A-1)^{-1}\nabla_x\overline u_n(t,x),
\]
%For $(t,x)\in[0,T]\times H$ and $y\in D(A)$, by the very definition of G\^ateaux derivative, we have
%\begin{equation}
%  \label{eq:2015-09-30:03}
 % \langle \nabla_xu_n(t,x),(A-I)y\rangle_H=\langle \nabla_x\overline u_n(t,x),(A-I)y\rangle_B=\langle 
%(\overline A-I)^{-1}\nabla_x\overline u_n(t,x),y\rangle.
%\end{equation}
we obtain from \eqref{eq:2015-09-29:06} that
\begin{equation}
  \label{eq:2015-09-29:09}
  \sup_{\substack{n\geq 1\\(t,x)\in[0,T]\times H}}|A^*\nabla_xu_n(t,x)|_H<+\infty.
\end{equation} 
Therefore, recalling that $S$ is a semigroup of contractions, we have 
\begin{equation*}
    |A^*_n\nabla_xu_n(t,x)|_H\leq
|n(n-A)^{-1}|_{L(H)}|A^*\nabla_xu_n(t,x)|_H
\leq 
|A^*\nabla_xu_n(t,x)|_H
\end{equation*}
for all $(t,x)\in [0,T] \times H$ which, together with \eqref{eq:2015-09-29:09}, shows 
\eqref{2015-09-30:01}.
\end{proof}

\vskip10pt
We now define for $n\geq 1$
$$
L_{n}\colon  [0,T]\times H\times H\times \mathbf{S}_m\rightarrow \mathbb{R},\ (t,x,p,P)\mapsto
\langle p,G_n(t,x)\rangle+\frac{1}{2}\operatorname{Tr}(\sigma_n(t,x)\sigma_n^*(t,x)P)
$$
where $\mathbf{S}_m$ is the set of $m\times m$ symmetric matrices.

We consider the following terminal value problems
\begin{equation}\label{eq:2015-01-16:01}
    \begin{dcases}
              -v_t-\langle A_nx,D_xv\rangle-L_n(t,x,D_{x}v,D_{x_0x_0}^2v)=0   &\quad (t,x)\in(0,T)\times H\\
        v(T,x)=h_n(x)&\quad x\in H.
      \end{dcases}
\end{equation} 
Since the operator $A_n$ is bounded we will use the definition of viscosity solution from \cite{Lions89jfa}.

\begin{definition}\label{def:2015-06-15:02}
    A locally bounded\footnote{By ``locally bounded'' we mean ``bounded on bounded subsets of the domain'', and by
     ``locally uniformly continuous'' we mean ``uniformly continuous on bounded subsets of the domain''.} upper semi-continuous function 
    $v$ on $(0,T]\times H$ is a viscosity
  subsolution of \eqref{eq:2015-01-16:01} if $v(T,x)\leq h_n(x)$ for all $x\in H$, and
  whenever $v-\varphi$ has a local maximum at a point $(\hat t,\hat x)\in
  (0,T)\times H$, for some $\varphi\in C^{1,2}((0,T)\times H)$, then
$$
-\varphi_t(\hat t,\hat x)-\langle A_n\hat x,D_x \varphi(\hat t,\hat
x)\rangle -L_n (\hat t,\hat x,D_x \varphi(\hat t,\hat
x),D^2_{x_0}\varphi(\hat t,\hat x))\leq 0.
$$
A locally bounded lower semi-continuous function $v$ on $(0,T]\times H$
is a viscosity
supersolution of \eqref{eq:2015-01-16:01} if $v(T,x)\geq h_n(x)$ for all $x\in H$, and 
whenever $v-\varphi$ has a local minimum at a point $(\hat t,\hat x)\in
(0,T)\times H$, for some 
$\varphi\in C^{1,2}((0,T)\times H)$,
then
$$
- \varphi_t(\hat t,\hat x)-\langle A_n\hat   x,D_x \varphi(\hat t,\hat
x)\rangle -L_n (\hat t,\hat x,D_x \varphi(\hat t,\hat
x),D^2_{x_0}\varphi(\hat t,\hat x))\geq 0.
$$ 
A viscosity solution of \eqref{eq:2015-01-16:01} is a function which is both a
viscosity subsolution and a viscosity supersolution of \eqref{eq:2015-01-16:01}.
\end{definition}

\begin{theorem}\label{propp:2014-02-15:ac}
% Suppose that $G$ and $\sigma$ satisfy Assumptions \ref{ass:2015-06-04:01}-(i) with with $B=B_{A, 1 }$ and let Assumption \ref{ass:2015-06-21:03} hold. Then the 
For $n\geq 1$, the function $u_{n}$
is the unique (within the class of, say locally uniformly continuous functions with at most polynomial growth) viscosity solution of \eqref{eq:2015-01-16:01}.
\end{theorem}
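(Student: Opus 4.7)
The plan is to prove existence by verifying that $u_n$ satisfies the viscosity inequalities via the dynamic programming principle combined with It\^o's formula, and then to obtain uniqueness from a comparison principle that already exists in the literature for bounded operators in Hilbert spaces. Because $A_n$ is bounded and $G_n$, $\Sigma_n$ are in $C^{0,1}_b([0,T]\times H,\cdot)$, equation \eqref{eq:2015-01-16:01} is a standard (not $B$-continuous) viscosity-solution problem in the sense of \cite{Lions89jfa}.

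\textbf{Existence.} First I would establish the dynamic programming identity
\begin{equation*}
u_n(t,x)=\mathbb{E}\bigl[u_n(t+h,X^{t,x}_{n,t+h})\bigr],\qquad 0\leq t\leq t+h\leq T,
\end{equation*}
which follows from the flow property $X^{t,x}_{n,s}=X^{t+h,X^{t,x}_{n,t+h}}_{n,s}$ for $s\geq t+h$ and conditioning on $\mathcal F_{t+h}$. I would then take a test function $\varphi\in C^{1,2}((0,T)\times H)$ with $u_n-\varphi$ having a local maximum at $(\hat t,\hat x)$ and use the DPP to write, for $h>0$ small,
\begin{equation*}
0\leq \mathbb{E}\bigl[\varphi(\hat t+h,X^{\hat t,\hat x}_{n,\hat t+h})-\varphi(\hat t,\hat x)\bigr].
\end{equation*}
Since $A_n\in L(H)$, the process $X^{\hat t,\hat x}_n$ is a strong solution (cf.\ footnote \ref{2016-05-18:00} in the excerpt), so It\^o's formula applies to $\varphi(s,X^{\hat t,\hat x}_{n,s})$, yielding
\begin{equation*}
\mathbb{E}\bigl[\varphi(\hat t+h,X^{\hat t,\hat x}_{n,\hat t+h})-\varphi(\hat t,\hat x)\bigr]=\mathbb{E}\int_{\hat t}^{\hat t+h}\!\bigl(\varphi_t+\langle A_nX^{\hat t,\hat x}_{n,s}+G_n(s,X^{\hat t,\hat x}_{n,s}),D_x\varphi\rangle+\tfrac12\operatorname{Tr}(\sigma_n\sigma_n^*D^2_{x_0x_0}\varphi)\bigr)ds,
\end{equation*}
where the Hessian reduces to the $x_0x_0$-block because of the structure \eqref{eq:2015-06-20:01} of $\Sigma_n$. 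Dividing by $h$ and letting $h\downarrow 0$, using continuity of $s\mapsto X^{\hat t,\hat x}_{n,s}$ at $\hat t$ and local boundedness of the integrand, gives the subsolution inequality. The supersolution inequality is symmetric. The terminal condition $u_n(T,x)=h_n(x)$ is immediate. Continuity of $u_n$ in $(t,x)$ is provided by Proposition~\ref{prop:2015-06-21:02}-\emph{(\ref{2015-09-29:01})}, and sublinear (in particular polynomial) growth follows from the $|\cdot|_B$-Lipschitz continuity of $u_n(t,\cdot)$ together with $|\cdot|_B\leq C|\cdot|_H$.

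\textbf{Uniqueness.} Since $A_n\in L(H)$ and $G_n$, $\sigma_n$ are Lipschitz uniformly in $t$, equation \eqref{eq:2015-01-16:01} falls into the framework of bounded second-order HJB equations in Hilbert spaces treated in \cite{Lions89jfa} (see also \cite{FGS,Lions88}); the fact that the second-order term is only finite-dimensional is a degeneracy that is actually favorable for comparison. The standard comparison proof proceeds by doubling the variables $(t,x)$ and $(s,y)$, adding a penalization $|x-y|_H^2/(2\varepsilon)$, and applying the finite-dimensional-trace version of the Crandall--Ishii lemma adapted to this separable Hilbert setting. The only input needed is the local uniform continuity and polynomial growth built into Definition~\ref{def:2015-06-15:02}, and Lipschitz continuity of the coefficients in the PDE, both of which hold here. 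Hence viscosity sub- and supersolutions in the prescribed class must coincide, giving uniqueness and in particular identifying the solution with $u_n$.

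\textbf{Main obstacle.} The only nontrivial point is the comparison argument in the infinite-dimensional Hilbert space: although $A_n$ is bounded, $H$ is not locally compact, so one cannot simply invoke finite-dimensional viscosity theory. However, the boundedness of $A_n$ removes the need for the $B$-continuity machinery of \cite{'Swiech1994,Kelome02}, and the comparison result in \cite{Lions89jfa} applies directly. The rest of the argument is bookkeeping: verifying the DPP, checking regularity of the flow for It\^o's formula, and the standard limit $h\downarrow 0$.
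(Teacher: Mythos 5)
Your proposal is correct and takes essentially the same route as the paper: the paper's proof of Theorem \ref{propp:2014-02-15:ac} is a one-line appeal to the standard theory for equations with bounded operators (citing \cite{FGS,Kelome02,Lions89jfa}) together with the regularity of $u_n$ from Proposition \ref{prop:2015-06-21:02}, and your DPP-plus-It\^o existence argument and literature comparison principle are precisely what those references carry out. The only step to tidy is the It\^o argument at a merely local maximum: since trajectories of $X^{\hat t,\hat x}_n$ can leave the neighborhood where $u_n-\varphi\leq (u_n-\varphi)(\hat t,\hat x)$, one should localize with an exit-time stopping argument (using the dynamic programming principle with stopping times) or first make the maximum global by modifying $\varphi$ --- standard bookkeeping that does not affect the validity of your approach.
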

\begin{proof}
Since $A_n$ is a bounded operator this is a standard result, see e.g.\ \cite{FGS, Kelome02, Lions89jfa}.
Notice that Proposition \ref{prop:2015-06-21:02}-\emph{(\ref{2015-09-29:01})} guarantees that the function $u_n$  
is locally uniformly continuous on $[0,T]\times H$ and is Lipschitz continuous in $x$.
\end{proof}

\begin{remark}\label{remBcontvisc}
This is not needed here however it is worth noticing that
 % under the assumptions of Theorem \ref{propp:2014-02-15:ac} 
 the function $u$ is the 
unique so called $B_{A, 1}$-continuous viscosity solution
(unique within the class of, say $B_{A, 1}$-continuous functions with at most polynomial growth which attain the terminal condition
locally uniformly), of the equation
\begin{equation}\label{eq:2015-01-16:01aaa}
    \begin{dcases}
              -u_t-\langle Ax,D_xu\rangle-L(t,x,D_{x}u,D_{x_0x_0}^2u)=0   &\quad (t,x)\in(0,T)\times H\\
        u(T,x)=h(x)&\quad x\in H,
      \end{dcases}
\end{equation}
where
\[
L\colon  [0,T]\times H\times H\times \mathbf{S}_m\rightarrow \mathbb{R},\ (t,x,p,P)\mapsto
\langle p,G(t,x)\rangle+\frac{1}{2}\operatorname{Tr}(\sigma(t,x)\sigma^*(t,x)P).
\]
For the proof of this we refer the reader to \cite[Theorem 3.64]{FGS}.

\end{remark}

%\section{Regularity of viscosity solutions with respect to the finite dimensional component}

\subsection{Space sections of viscosity solutions}

%   We are going to show that, if the infinitesimal generator  $A$ is bounded, the
% $H_1$-sections of the viscosity solution $u$ of \eqref{eq:2015-01-16:01}  are solutions of an
% $H_1$-indexed collection of finite dimensional PDEs.

We skip the proof of the following basic lemma (for a very similar version, see \cite[Proposition~3.7]{Crandall1992}.
\begin{lemma}\label{lem:2012-11-15:aa}
  Let $D$ be a set, and $f,g\colon D\to \mathbb{R}$ be functions, with $g\geq
  0$.  Let 
$$
Z=\left\{y\in D\colon g(y)=0\right\}
$$ be the set of zeros of
  $g$.  Suppose that $Z\neq \emptyset$.  Let $\left\{h_i\colon D\to
    \mathbb{R}\right\}_{i\in \mathbb{N}}$ be a sequence of functions converging
  uniformly to $0$ in $D$ as $i\to +\infty$.  Let $\{\epsilon_i\}_{i\in \mathbb{N}}$ be
  a sequence of positive numbers decreasing to $0$.  Define
%{\par\nobreak\noindent} %Needed befor a math environment, to let numbering of lines work properly on the preceding paragraph.
  $$
  \psi_i(y)\coloneqq f(y)-\frac{g(y)}{\epsilon_i}+h_i(y)\qquad \forall i\in \mathbb{N},\ \forall y\in D.
  $$
  Suppose that $\{y_i\}_{i\in \mathbb{N}}\subset D$ is a sequence such that
%{\par\nobreak\noindent} %Needed befor a math environment, to let numbering of lines work properly on the preceding paragraph.
  $$
  \lim_{i\to \infty}\left[\sup_{y\in D}\psi_i(y)-\psi_i(y_i)\right]=0.
  $$
  Then ${\displaystyle  \lim_{i\to \infty} \frac{g(y_i)}{\epsilon_i}=0.  }$
\end{lemma}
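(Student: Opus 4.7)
The plan is to compare, for two indices $i<j$, the almost-maximizer $y_j$ of $\psi_j$ against the problem at level $i$, exploiting the fact that the penalty factor $1/\epsilon$ has grown. Denote $M_i := \sup_{y\in D}\psi_i(y)$, $\alpha_i := M_i - \psi_i(y_i)$ (which tends to $0$ by hypothesis and is finite for $i$ large), and $\beta_i := \sup_{y\in D}|h_i(y)|\to 0$. The basic identity used throughout is
\begin{equation*}
\psi_j(y) - \psi_i(y) \;=\; -\,g(y)\!\left(\tfrac{1}{\epsilon_j} - \tfrac{1}{\epsilon_i}\right) + \bigl(h_j(y) - h_i(y)\bigr),\qquad y\in D,
\end{equation*}
valid for all $i,j\in\mathbb{N}$.

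The first step is to show that $\{M_i\}$ converges in $\mathbb{R}$. For $j>i$, the identity together with $\epsilon_j<\epsilon_i$ and $g\ge 0$ gives $\psi_j \le \psi_i + (h_j - h_i)$ pointwise, hence $M_j \le M_i + \beta_i + \beta_j$. Combined with the lower bound $M_i \ge \psi_i(z) \ge f(z) - \beta_i$ for any fixed $z\in Z$, this forces $\limsup_j M_j \le \liminf_i M_i$, so $\{M_i\}$ converges to some $M\in\mathbb{R}$.

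Next, evaluating the identity at $y=y_j$ for $j>i$, and using $\psi_i(y_j)\le M_i$ together with $\psi_j(y_j)=M_j-\alpha_j$, one obtains
\begin{equation*}
g(y_j)\!\left(\tfrac{1}{\epsilon_j}-\tfrac{1}{\epsilon_i}\right) \;\le\; (M_i-M_j)+\alpha_j+\beta_i+\beta_j.
\end{equation*}
Writing $1/\epsilon_j-1/\epsilon_i=(1-\epsilon_j/\epsilon_i)/\epsilon_j$, which is positive for $j>i$, dividing by $1/\epsilon_j$ yields
\begin{equation*}
\frac{g(y_j)}{\epsilon_j}\!\left(1-\frac{\epsilon_j}{\epsilon_i}\right) \;\le\; (M_i-M_j)+\alpha_j+\beta_i+\beta_j.
\end{equation*}

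The conclusion follows by an iterated limit. Fix $i$ and let $j\to\infty$: $\epsilon_j/\epsilon_i\to 0$, $M_j\to M$, $\alpha_j,\beta_j\to 0$, so $\limsup_{j\to\infty} g(y_j)/\epsilon_j \le M_i - M + \beta_i$. Now let $i\to\infty$ so that the right-hand side vanishes; since $g(y_i)/\epsilon_i\ge 0$, this yields $\lim_{i\to\infty} g(y_i)/\epsilon_i = 0$. The main subtlety is the ordering of the two limits: $j$ must be sent to infinity before $i$, so that the factor $(1-\epsilon_j/\epsilon_i)$ approaches $1$ and can be discarded, while simultaneously the difference $M_i - M_j$ has to be kept from being negligible too soon.
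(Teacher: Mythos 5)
Your proof is correct. Note that the paper itself skips the proof of this lemma, referring instead to the classical argument in Crandall--Ishii--Lions (Proposition~3.7 of the user's guide), and your argument is essentially that standard one: compare the penalizations at two levels $i<j$, use $g\ge 0$ and the monotonicity of $1/\epsilon_i$ to show the sup values $M_i$ form an almost-decreasing, bounded-below sequence (bounded below via a point of $Z$), hence convergent, and then read off $g(y_j)/\epsilon_j\to 0$ from the identity evaluated at the near-maximizer. Your version correctly handles the two features added here relative to the textbook statement --- the uniform perturbations $h_i$ (absorbed into the $\beta_i$ terms) and the fact that the $y_i$ are only approximate maximizers (the $\alpha_i$ terms) --- and the finiteness of $M_i$ for large $i$, which is implicit in the hypothesis, is invoked where needed; the ordering of the iterated limits ($j\to\infty$ first, then $i\to\infty$) is handled properly, since the bound $\limsup_j g(y_j)/\epsilon_j\le M_i-M+\beta_i$ holds for each fixed $i$ and its right-hand side vanishes as $i\to\infty$.
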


Fix $\overline x_1\in H_1$.  Let $\phi\in C^{1,2}((0,T)\times
\mathbb{R}^m)$ and let $(\hat t,\hat x_0)\in (0,T)\times \mathbb{R}^m$ be a maximum
point of $u_n(\cdot,(\cdot,\overline x_1))-\phi(\cdot,\cdot)$ over $[0,T]\times\mathbb{R}^m$. Without loss of generality we can assume 
that $\phi\in C^{1,2}([0,T]\times
\mathbb{R}^m)$ and that the maximum is strict and global.

For $\epsilon>0$, define the function
\begin{equation}
  \label{eq:2015-06-18:02}
  \Phi_{\epsilon}(t,x_0,x_1)=\phi(t,x_0)+\frac{1}{\epsilon}| (0,x_1-\overline x_1)|_H^2,
\end{equation}
where $t\in (0,T)$, $(x_0,x_1)\in H$.  Observe that
$\Phi_{\epsilon} \in C^{1,2}([0,T]\times H)$,
 and
\begin{equation}
   \label{eq:2015-06-18:05}
\begin{split}
   &D_t\Phi_{\epsilon}(t,x)=\phi_t(t,x_0)\\
&D_x\Phi_{\epsilon}(t,x)=\left(D_{x_0}\phi(t,x_0),0\right)+\frac{2}{\epsilon}\left(0,x_1-\overline x_1\right)\\
& D^2_{x_0}\Phi_{\epsilon}(t,x)=D^2_{x_0}\phi(t,x_0).
\end{split}
\end{equation}

\begin{lemma}\label{lem:2012-11-20:aa}
For each $n\geq 1$,
 there exist real sequences $\{a_i\}_{i\in \mathbb{N}}$, $\{\epsilon_{i}\}_{i\in \mathbb{N}}$
     converging to $0$, and a sequence $\{p_i\}_{i\in \mathbb{N}}$ converging to the origin in $H$, such
     that the function
     \begin{equation}
    % \psi_i\colon 
    (0,T)\times H\to \mathbb{R},\
        (t,x) \mapsto  u_{n}(t,x)-\Phi_{\epsilon_i}(t,x)-\langle p_i,x\rangle -a_it
     \end{equation}
     has a strict global maximum at $(t_i, x_i)$ and the
     sequence $\{( t_i,x_i)\}_{i\in \mathbb{N}}$ converges to $(\hat t,(\hat
     x_0,\overline x_1))$.
 \end{lemma}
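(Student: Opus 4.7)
The strategy is to realize the sequence of perturbed maximum points as outputs of a smooth variational principle applied to $u_n-\Phi_{\epsilon}$ with $\epsilon=\epsilon_i\to 0$. The coercive term $\tfrac{1}{\epsilon}|(0,x_1-\overline x_1)|_H^2$ forces the $x_1$-component of any near-maximizer to $\overline x_1$, while the linear perturbations $\langle p_i,x\rangle + a_it$ are the ones produced by the variational principle to promote a near-maximum to a strict one. Lemma \ref{lem:2012-11-15:aa} is then precisely the bookkeeping tool needed to extract the convergence $x_{1,i}\to\overline x_1$.

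Concretely, I would first \emph{replace $\phi$ by $\phi(t,x_0)+|x_0-\hat x_0|^4$}, which preserves $C^{1,2}$-regularity, the fact that $(\hat t,\hat x_0)$ is a strict global maximum of $u_n(\cdot,(\cdot,\overline x_1))-\phi(\cdot,\cdot)$, and the values and derivatives at $(\hat t,\hat x_0)$ (what matters for later uses of this lemma). Because $u_n$ is $|\cdot|_B$-Lipschitz by Proposition~\ref{prop:2015-06-21:02}-\emph{(\ref{2015-09-29:01})}, after this modification
\[
F_\epsilon(t,x):=u_n(t,x)-\Phi_\epsilon(t,x)
\]
is bounded above on $[0,T]\times H$ and satisfies $F_\epsilon(t,x)\to-\infty$ as $|x_0|_m+|x_1-\overline x_1|_{H_1}\to\infty$: the coercivity in $x_0$ comes from the quartic term added to $\phi$ and the sublinear growth of $u_n$, the coercivity in $x_1$ from the $\epsilon^{-1}$-penalty against the $|\cdot|_B$-Lipschitz growth of $u_n$.

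Next, for each $i$ I would invoke Stegall's variational principle on the Asplund space $\mathbb{R}\times H$: since $F_{\epsilon_i}$ is continuous, bounded above and coercive (so the essential supremum is attained inside a large closed ball), there exist $p_i\in H$ and $a_i\in\mathbb{R}$, of arbitrarily small norm, such that
\[
(t,x)\longmapsto F_{\epsilon_i}(t,x)-\langle p_i,x\rangle-a_it
\]
attains a strict global maximum at some $(t_i,x_i)$. Coercivity makes this maximum be attained inside a bounded set, and for $|p_i|$, $|a_i|$ small enough the point $t_i$ lies in $(0,T)$.

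Finally, to get $(t_i,x_i)\to(\hat t,(\hat x_0,\overline x_1))$, I would apply Lemma \ref{lem:2012-11-15:aa} on a large bounded set $D\subset (0,T)\times H$ containing all the maximizers, with $f(t,x)=u_n(t,x)-\phi(t,x_0)$, $g(t,x)=|(0,x_1-\overline x_1)|_H^2$, $h_i(t,x)=-\langle p_i,x\rangle-a_it$ (which converges to $0$ uniformly on $D$), and $(y_i)=(t_i,x_i)$. The zero set of $g$ contains $(\hat t,\hat x_0,\overline x_1)$, where $f$ attains the global maximum $M=u_n(\hat t,(\hat x_0,\overline x_1))-\phi(\hat t,\hat x_0)$, and the strict-maximum property of Stegall's point plus the trivial lower bound $\sup\psi_i\ge\psi_i(\hat t,\hat x_0,\overline x_1)$ give $\sup\psi_i-\psi_i(t_i,x_i)\to 0$. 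The lemma then yields $|x_{1,i}-\overline x_1|_H^2/\epsilon_i\to 0$, hence $x_{1,i}\to\overline x_1$ in $H$, and $(t_i,x_{0,i})$ stays in a compact subset of $[0,T]\times\mathbb{R}^m$ by coercivity in $x_0$. Any accumulation point $(\tilde t,\tilde x_0)$ must then satisfy $u_n(\tilde t,(\tilde x_0,\overline x_1))-\phi(\tilde t,\tilde x_0)\ge M$, so by strictness of the original maximum $(\tilde t,\tilde x_0)=(\hat t,\hat x_0)$ and the entire sequence converges.

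The main obstacle is the noncompactness of bounded sets in the infinite-dimensional $H$: continuous coercive functions on $H$ need not attain their supremum, so one cannot simply extract a convergent maximizing sequence. The smooth variational principle is what bypasses this, and the linear form of its perturbation is exactly what produces the $\langle p_i,x\rangle + a_it$ term in the statement.
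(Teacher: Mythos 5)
Your proof is correct, and its skeleton coincides with the paper's: a Stegall/Ekeland--Lebourg linear perturbation produces the strict maxima at $(t_i,x_i)$ with $|a_i|,|p_i|$ small, Lemma~\ref{lem:2012-11-15:aa} (applied with $f=u_n-\phi$, $g=|(0,x_1-\overline x_1)|_H^2$) forces $x_{i,1}\to\overline x_1$, and strictness of the original maximum plus continuity of $u_n$ pins down $(t_i,x_{i,0})\to(\hat t,\hat x_0)$. Where you diverge is in the treatment of noncompactness: the paper simply fixes $R>|(\hat x_0,\overline x_1)|_H$ and applies the variational principle on the closed bounded convex set $[0,T]\times\mathbf{B}_R$ (where RNP holds and no coercivity is needed), accepting that the resulting maximum is global only over that ball --- which suffices, since Proposition~\ref{propp:2013-01-13-ab} only uses \emph{local} maxima; you instead coercivize by replacing $\phi$ with $\phi+|x_0-\hat x_0|^4$ (the quadratic penalty already dominates the $|\cdot|_B$-Lipschitz, hence at most linear, growth of $u_n$ in $x_1$, but not in $x_0$, so the quartic is genuinely needed) and apply Stegall globally. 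Your route buys a literally global strict maximum on $[0,T]\times H$, matching the lemma's statement more faithfully than the paper's ball-restricted version, at the price of proving the lemma for a modified $\Phi_{\epsilon_i}$; this is harmless for the downstream viscosity argument because the quartic vanishes together with its first and second derivatives at $(\hat t,\hat x_0)$, so it is a ``without loss of generality'' of the same kind the paper already invokes, but you should state that explicitly. Two small points: your claim that $t_i\in(0,T)$ once $|p_i|,|a_i|$ are small is unjustified at the point where you make it --- it only follows a posteriori, for $i$ large, from the convergence $(t_i,x_i)\to(\hat t,(\hat x_0,\overline x_1))$ with $\hat t\in(0,T)$ (the same caveat is implicit in the paper); and the hypothesis relevant for Stegall is the Radon--Nikodym property of $\mathbb{R}\times H$ rather than Asplundness, though for a Hilbert space both hold, and your sketched reduction from the coercive global case to the bounded-set case does require the superlinear decay you arranged, since a linear perturbation must not destroy coercivity.
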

 \begin{proof}
Let $R>|(\hat x_0,\overline x_1)|_H$
and $\mathbf{B}_R\coloneqq \{x\in H\colon |x|_H\leq R\}$.
 Let $\{\epsilon_{i}\}_{i\in \mathbb{N}}$ be a sequence converging to $0$. Applying the classical result of
Ekeland and Lebourg   \cite{EkelandLebourg76, Stegall78},
there exist sequences $\{a_i\}_{i\in \mathbb{N}}\subset \mathbb{R}$ and $\{p_i\}_{i\in \mathbb{N}}\subset  H$ such that $|a_i|\leq 1/i$, $|p_i|_H\leq 1/i$, and such that the function
\[
[0,T]\times \mathbf{B}_R\rightarrow \mathbb{R},\ 
u_{n}(t,x)-\Phi_{\epsilon_i}(t,x)-\langle p_i,x\rangle -a_it
\]
has a strict global maximum
at some point  $(t_i, x_i)\in [0,T]\times \mathbf{B}_R$.
By applying
Lemma~\ref{lem:2012-11-15:aa}
with $D=[0,T]\times \mathbf{B}_R$, $f(t,x)=u_n(t,x)-\varphi(t,x_0)$, $g(t,x)=|(0,x_1-\overline{x}_1)|_H^2$, $h_i(t,x)=-\langle p_i,x\rangle-a_it$, $y_i=(t_i,x_i)$,
we obtain
 \begin{equation}
   \label{eq:2015-06-18:00}
  \lim_{i\rightarrow \infty}|
   (0,x_{i,1}-\overline x_1)|_H= 0.
 \end{equation}

To conclude the proof it is then sufficient to 
 show that $(t_i,x_{i,0})\to (\hat
   t,\hat x_0)$.
 Indeed, suppose that this does not hold.
 Up to a subsequence, we can suppose that $(t_i,x_{i,0})\to
   (\widetilde t,\widetilde x_0)\neq (\hat t,\hat x_0)$.  Since, by assumption,
   $(\hat t,\hat x_0)$ is a strict global maximum point of
   $u_{n}(\cdot,(\cdot,\overline x_1))-\phi(\cdot,\cdot)$, there exists
   $\eta>0$ such that, for $i$ sufficiently large, we have
   \begin{equation}\label{eq:2012-11-20:ad}
     \begin{split}
            u_{n}(\hat t,(\hat x_0,\overline x_1))-\phi(\hat t,\hat x_0)\geq& \eta+ u_n(t_i,(x_{i,0},\overline x_1))-\varphi(t_i,x_{i,0})\\
 \geq &   \eta+u_{n}(t_i,(x_{i,0},\overline x_1))-
     \Phi_{\epsilon_i}(t_i,x_i)\\
     = &\eta+ \left(u_{n}(t_i,(x_{i,0},\overline x_1))- u_{n}(t_i,x_i)\right)
     +u_{n}(t_i,x_i) - \Phi_{\epsilon_i}(t_i,x_i) 
     \\
  \geq &\eta+ \left(u_{n}(t_i,(x_{i,0},\overline x_1))- u_{n}(t_i,x_i)\right)
     +u_{n}(\hat t,(\hat x_{0},\overline x_1))-\phi(\hat t,\hat x_{0})\\
&     +\langle p_i,x_i-(\hat x_{0},\overline x_1)\rangle+a_i(t_i-\hat t).
   \end{split}
 \end{equation}
By \eqref{eq:2015-06-18:00},
${\displaystyle \lim_{i\rightarrow \infty}
(x_{i,0},x_{i,1})=(\widetilde x_0,\overline x_1).}
$
Thus by  continuity of $u_{n}$, for $i$ sufficiently large, we have
  $$
  |u_{n}(t_i,(x_{i,0},\overline x_1))- u_{n}(t_i,x_i)| \leq \frac{\eta}{2}
  $$
and then if follows from \ref{eq:2012-11-20:ad} that
  $$
  u_{n}(\hat t,(\hat x_0,\overline x_1))-\phi(\hat t,\hat x_0)\geq
  \frac{\eta}{2}+ u_{n}(\hat t,(\hat x_0,\overline x_1))-\phi(\hat t,\hat
  x_0) +\langle p_i,x_i-(\hat x_{0},\overline
  x_1)\rangle+a_i(t_i-\hat t).
  $$
  This produces a contradiction by letting $i\to+\infty$, recalling
  that $p_i\to 0$ and $a_i\to 0$.  Thus we must have ${\displaystyle \lim_{i\rightarrow \infty}(t_i,x_{i,0})=(\hat t,\hat x_0)}$.  
\end{proof}

\vskip10pt
% Let Assumptions \ref{ass:2015-06-04:01}-(i), \ref{ipot:B-continuity-FG} and 
% \ref{ass:2015-06-21:03} hold with $B=B_{A, 1 }$.

For any $\overline x_1\in H_1$ and $n\in \mathbb{N}$, we define  the following functions
\begin{equation}
  \label{eq:2015-06-23:00}
  v_{n,\overline x_1}\colon [0,T]\times \mathbb{R}^m\rightarrow \mathbb{R},\, (t,x_0)\mapsto v_{n,\overline x_1}(t,x_0)\coloneqq u_{n}(t,(x_0, \overline x_1)),
\end{equation}
\begin{equation}
  \label{eq:2015-06-24:00}
  a_{n,\overline x_1}\colon [0,T]\times \mathbb{R}^m\rightarrow \mathbf{S}_m,\,
(t,x_0)\mapsto  \sigma_n(t,(x_0,\overline x_1))\sigma_n^*(t,(x_0,\overline x_1))
\end{equation}
and
\begin{equation}
  \label{eq:2015-06-23:02}
  \beta_{n,\overline x_1}\colon [0,T]\times \mathbb{R}^m\rightarrow \mathbb{R},\, (t,x_0)\mapsto 
\langle A_n(x_0,
  \overline x_1)+G_n(t,(x_0,\overline x_1)),\nabla_xu_n(t,(x_0,\overline x_1))\rangle.
\end{equation}
We  associate to \eqref{eq:2015-01-16:01} the following terminal value problem
  \begin{equation} \label{eq:2015-06-18:01}
    \begin{dcases}
                -v_t(t,x_0)-\frac{1}{2}\operatorname{Tr}(
a_{n,\overline x_1}(t,x_0)D^2_{x_0}v(t,x_0)
)
-\beta_{n,\overline x_1}(t,x_0)=0
          & (t,x_0)\in(0,T)\times\mathbb{R}^m\\
          v(T,x_0)=h_n(x_0,\overline x_1)& x_0\in \mathbb{R}^m.
        \end{dcases}
      \end{equation}

We recall that it follows from 
Proposition \ref{prop:2015-06-21:02}-\emph{(\ref{2015-09-29:03})} that
for every $\overline x_1\in H_1$ the function $\beta_{n,\overline x_1}$ is continuous and for every compact set $K\subset\mathbb{R}^m$,
\begin{equation}
  \label{eq:2015-06-23:0211}
  \sup_{\substack{n\ge 1\\(t,x_0)\in [0,T]\times K}} |\beta_{n,\overline x_1}(t,x_0)|< +\infty.
\end{equation}

In the following proposition we show that the section functions $v_{n,\overline x_1}$ are the viscosity solutions of \eqref{eq:2015-06-18:01}.
For the definition of viscosity solution in finite dimensions, we refer to \cite{Crandall1992}.

\begin{proposition}\label{propp:2013-01-13-ab}
% Let Assumptions \ref{ass:2015-06-04:01}-(i), \ref{ipot:B-continuity-FG} and 
% \ref{ass:2015-06-21:03} hold with $B=B_{A, 1 }$.  Then, 
For every $\overline x_1\in H_1$ and $n\geq 1$,
 $v_{n,\overline{x}_1}$
  is a viscosity solution  of
\eqref{eq:2015-06-18:01}.
\end{proposition}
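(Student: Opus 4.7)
The plan is to lift the finite-dimensional test function $\phi$ to an infinite-dimensional test function by penalizing displacement in the $x_1$ variable, apply the viscosity inequality for $u_n$ (available from Theorem~\ref{propp:2014-02-15:ac}), and then exploit the G\^ateaux differentiability of $u_n$ to eliminate the singular penalty contribution before passing to the limit. I carry out the subsolution case; the supersolution case is completely analogous via the obvious variant of Lemma~\ref{lem:2012-11-20:aa} for minima. The terminal condition is immediate from $v_{n,\overline x_1}(T,x_0)=u_n(T,(x_0,\overline x_1))=h_n(x_0,\overline x_1)$.

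So given $\phi\in C^{1,2}((0,T)\times\mathbb{R}^m)$ with $(\hat t,\hat x_0)$ a maximum point of $v_{n,\overline x_1}-\phi$, I may as usual assume $\phi\in C^{1,2}([0,T]\times\mathbb{R}^m)$ and the maximum is strict and global. Forming $\Phi_{\epsilon}$ as in \eqref{eq:2015-06-18:02} and applying Lemma~\ref{lem:2012-11-20:aa}, I obtain sequences $\epsilon_i,a_i\to 0$, $p_i\to 0$ in $H$, and points $(t_i,x_i)\to(\hat t,(\hat x_0,\overline x_1))$ at which the map $(t,x)\mapsto u_n(t,x)-\Phi_{\epsilon_i}(t,x)-\langle p_i,x\rangle-a_it$ attains a strict global maximum. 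Since this perturbed test function lies in $C^{1,2}((0,T)\times H)$, Definition~\ref{def:2015-06-15:02} applied to $u_n$ at $(t_i,x_i)$, combined with the derivative formulas \eqref{eq:2015-06-18:05}, yields
$$-\phi_t(t_i,x_{i,0})-a_i-\langle A_n x_i+G_n(t_i,x_i),\,D_x\Phi_{\epsilon_i}(t_i,x_i)+p_i\rangle-\tfrac12\operatorname{Tr}\bigl(\sigma_n\sigma_n^*(t_i,x_i)\,D^2_{x_0}\phi(t_i,x_{i,0})\bigr)\le 0.$$

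The main obstacle is the singular penalty term $\tfrac{2}{\epsilon_i}(0,x_{i,1}-\overline x_1)$ inside $D_x\Phi_{\epsilon_i}(t_i,x_i)$, which need not vanish as $i\to\infty$ despite $x_{i,1}\to\overline x_1$. To handle it I use that $u_n\in\mathcal{G}^{0,1}_s([0,T]\times H)$ by Proposition~\ref{prop:2015-06-21:02}-(\ref{2015-09-29:03}): writing out the one-sided directional difference quotients of $u_n-\Phi_{\epsilon_i}-\langle p_i,\cdot\rangle$ in an arbitrary direction $v\in H$ at the strict maximum $(t_i,x_i)$ and letting the parameter tend to $0^\pm$ gives the identity
$$\nabla_x u_n(t_i,x_i)=D_x\Phi_{\epsilon_i}(t_i,x_i)+p_i.$$
Substituting this converts the drift term in the viscosity inequality into $\langle A_n x_i+G_n(t_i,x_i),\nabla_x u_n(t_i,x_i)\rangle$, which by continuity of $G_n$, boundedness of $A_n$, and the strong continuity of $\nabla_x u_n$ converges to $\beta_{n,\overline x_1}(\hat t,\hat x_0)$. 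Passing to the limit $i\to\infty$ using also continuity of $\sigma_n$, $\phi_t$, and $D^2_{x_0}\phi$ yields
$$-\phi_t(\hat t,\hat x_0)-\beta_{n,\overline x_1}(\hat t,\hat x_0)-\tfrac12\operatorname{Tr}\bigl(a_{n,\overline x_1}(\hat t,\hat x_0)\,D^2_{x_0}\phi(\hat t,\hat x_0)\bigr)\le 0,$$
which is precisely the subsolution inequality for \eqref{eq:2015-06-18:01}.
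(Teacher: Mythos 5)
Your proposal is correct and follows essentially the same route as the paper's own proof: lifting $\phi$ via the penalized test function $\Phi_{\epsilon_i}$ from \eqref{eq:2015-06-18:02}, invoking Lemma~\ref{lem:2012-11-20:aa}, and using the G\^ateaux differentiability of $u_n$ to obtain the identity $\nabla_x u_n(t_i,x_i)=D_x\Phi_{\epsilon_i}(t_i,x_i)+p_i$ (the paper's \eqref{eq:2012-11-22:ab}) before passing to the limit with the strong continuity of $\nabla_x u_n$. Your explicit remarks on why the singular penalty gradient need not vanish and on the first-order condition via one-sided difference quotients merely flesh out steps the paper states tersely.
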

\begin{proof}%[Proof of Proposition \ref{propp:2013-01-13-ab}.]
We prove that $v_{n,\overline x_1}$ is a subsolution. The supersolution case is similar.
The continuity of $u_{n}$ implies the 
  continuity of $v_{n,\overline x_1}$.  Let $\phi\in
  C^{1,2}((0,T)\times \mathbb{R}^m)$ 
be such that
$v_{n,\overline    x_1}-\phi$ has a local maximum at $(\hat t,\hat x_0)\in
  (0,T)\times \mathbb{R}^m$. Without loss of generality, we can assume that the maximum is strict and global and that $\phi\in
  C^{1,2}([0,T]\times \mathbb{R}^m)$. By Lemma \ref{lem:2012-11-20:aa}, there exist real sequences
  $\{\epsilon_i\}_{i\in \mathbb{N}}, \{a_i\}_{i\in \mathbb{N}}$ converging to $0$, and a sequence
  $\{p_i\}_{i\in \mathbb{N}}$ in $H$ converging to $0$, such that the functions
$$
[0,T]\times \mathbb{R}^m\rightarrow \mathbb{R},\,  (t,x)\mapsto u_{n}(t,x)-\Phi_{\epsilon_i}(t,x)-\langle p_i,  x\rangle-a_it
$$
 have local maxima at $( t_i, x_i)$ and the sequence $\{(t_i,x_i)\}_{i\in \mathbb{N}}$ converges to $(\hat t,(\hat x_0,\overline x_1))$.  
  Since $u_{n}$ is a viscosity solution of \eqref{eq:2015-01-16:01}, we have
  \begin{equation}
    \label{eq:2015-06-18:03}
    \begin{multlined}[c][0.85\displaywidth]
          -D_t\Phi_{\epsilon_i}\left(t_i, x_i \right)
-a_i
-\langle A_n x_i, D_x\Phi_{\epsilon_i}(t_i, x_i)+p_i\rangle
\\ -L_n\left(t_i, x_i,D_{x}\Phi_{\epsilon_i}(t_i, x_i )+p,D^2_{x_0}\Phi_{\epsilon_i}(t_i, x_i) \right)\leq 0.
\end{multlined}
\end{equation}
   Since $u_{n}\in \mathcal{G}_s^{0,1}([0,T]\times H,\mathbb{R})$, we must have
  \begin{equation}\label{eq:2012-11-22:ab}
    \nabla_xu_{n}(t_i, x_i)=D_x\Phi_{\epsilon_i}(t_i, x_i)+p_i.
  \end{equation}    
Thus, by recalling \eqref{eq:2015-06-18:05}, we have
  \begin{equation}
      -D_t\Phi_{\epsilon_i}\left(t_i, x_i \right)
-a_i
-\langle A_n x_i,\nabla_xu_{n}(t_i, x_i)\rangle
% D_x\Phi_{\epsilon_i}(t_i, x_i)+p_i
 -L_n\left(t_i, x_i,\nabla_xu_{n}(t_i, x_i)
%D_{x}\Phi_{\epsilon_i}(t_i, x_i )+p
,D^2_{x_0}\phi(t_i, x_{i,0}) \right)\leq 0.
\end{equation}
We now pass to the limit $i\rightarrow +\infty$ and, by \eqref{eq:2015-06-18:05} and the strong continuity of $\nabla_x u_n$, we obtain
\begin{equation*}
  -\phi_t\left(\hat t, \hat x_0 \right)
  -\langle A_n (\hat x_0,\overline x_1),\nabla_xu_{n}(\hat t, (\hat x_0, \overline x_1))\rangle
  -L_n\left(\hat t, (\hat x_0,\overline x_1),\nabla_xu_{n}(\hat t, (\hat x_0,\overline x_1))
    ,D^2_{x_0}\phi(\hat t, \hat x_0) \right)\leq 0,
\end{equation*}
which can be written, by 
using the definition of $\beta_{n,\overline x_1}$,
% \begin{equation}\label{eq:2015-06-18:04}
%       -D_t\Phi_{\epsilon_i}\left(t_i, x_i\right)-a_i
% -\frac{1}{2}\operatorname{Tr}\left(    D^2_{x_0}\Phi_{\epsilon_i}(t_i, x_i )\sigma_n(t_i, x_i)\sigma_n^*(t_i, x_i)\right)-    \beta_{n,\overline x_1}(t_i, x_{i,0}) 
%     \leq 0.
%   \end{equation}
% We now pass to the limit as $i\rightarrow +\infty$
% and, recalling \eqref{eq:2015-06-18:05} and the strong continuity of $\nabla_xu_{n,k}$, we obtain
$$
-\phi_t(\hat t,\hat x_0)
-\frac{1}{2}\operatorname{Tr}\left(   
 \left(D^2_{x_0}\phi(\hat t,\hat x_0)\right)
 \sigma_n(\hat t,(\hat x_0,\overline x_1))\sigma_n^*(\hat t,(\hat x_0, \overline x_1))\right)
-\beta_{n,\overline x_1}(\hat t,\hat x_0)
\leq
0.
$$
Thus $v_{n,\overline x_1}$ is a viscosity subsolution of \eqref{eq:2015-06-18:01}.
  \end{proof}

\subsection{Regularity with respect to the finite dimensional component}

In this section we show that, % under Assumptions \ref{ass:2015-06-04:01}-(i) and \ref{ipot:B-continuity-FG}-(i) with $B=B_{A, 1 }$, Assumption \ref{ass:2015-06-21:03}-(i), and
if $\sigma$ is non-degenerate, then the function $u$ defined by \eqref{eq:2015-01-16:00} is differentiable with respect to $x_0$ and $D_{x_0}u$ enjoys some H\"older continuity.

\begin{theorem}
%Suppose that Assumptions \ref{ass:2015-06-04:01}-(i), \ref{ipot:B-continuity-FG}-(i) (with $B=B_{A, 1 }$), and \ref{ass:2015-06-21:03}-(i) hold.
Suppose that, for every $(t,x)\in [0,T]\times H$ and $y\in \mathbb{R}^m$, $\sigma(t,x)y\neq 0$.
Then, for every $\overline x_1\in H_1$, the function $v_{\overline x_1}$ defined by $v_{\overline x_1}(t,x_0)\coloneqq u(t,(x_0,\overline x_1))$
belongs to $C^{\alpha+1}_{loc}((0,T)\times \mathbb{R}^m)$, for every $\alpha\in (0,1)$.
\end{theorem}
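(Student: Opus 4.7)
My strategy is to exploit the finite-dimensional viscosity PDE \eqref{eq:2015-06-18:01} satisfied by the sections $v_{n,\overline{x}_1}(t,x_0):=u_n(t,(x_0,\overline{x}_1))$, established in Proposition~\ref{propp:2013-01-13-ab}, to invoke interior parabolic $W^{1,2,p}$ regularity uniformly in $n$, and then to pass to the limit $n\to\infty$ using the convergence $u_n\to u$ from Proposition~\ref{prop:2015-06-21:02}-\emph{(\ref{2015-09-29:02})}. Since the conclusion is local, I fix parabolic cylinders $Q'\subset\subset Q\subset\subset(0,T)\times\mathbb{R}^m$ and aim to bound $\|v_{n,\overline{x}_1}\|_{C^{1+\alpha}(Q')}$ independently of $n$.

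First I would establish uniform bounds on $Q$ for $n$ large. The set $\mathcal{K}:=\{(t,(x_0,\overline{x}_1)):(t,x_0)\in Q\}$ is compact in $[0,T]\times H_B$. The non-degeneracy hypothesis forces a strictly positive lower bound on the smallest eigenvalue of $\sigma\sigma^{*}$ on $\mathcal{K}$; by \eqref{eq:2015-09-24:02b} we have $\sigma_n\to\sigma$ uniformly on $\mathcal{K}$, so for all $n$ sufficiently large the matrices $a_{n,\overline{x}_1}$ inherit a uniform ellipticity constant on $Q$. The same uniform-on-compacts convergence, combined with continuity of the limit $\sigma$, yields a uniform modulus of continuity and uniform $L^\infty$ bound for $\{a_{n,\overline{x}_1}\}$ on $Q$. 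The source $\beta_{n,\overline{x}_1}$ is uniformly bounded on $Q$ by \eqref{eq:2015-06-23:0211}, and the solutions $\{v_{n,\overline{x}_1}\}$ are uniformly bounded on $Q$ by Proposition~\ref{prop:2015-06-21:02}-\emph{(\ref{2015-09-29:01})}.

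Next, since $v_{n,\overline{x}_1}$ is a continuous viscosity solution of a linear, uniformly parabolic equation in $\mathbb{R}^m$ with continuous principal part, no lower-order term in $v$, and an $L^\infty$ source, standard interior $W^{1,2,p}$ estimates (Krylov's parabolic $L^p$ theory, or the Caffarelli--Crandall--Kocan--\Swiech\ framework for $L^p$-viscosity solutions) give, for every $p\in(1,\infty)$,
\begin{equation*}
\|v_{n,\overline{x}_1}\|_{W^{1,2,p}(Q')}\le C\big(\|v_{n,\overline{x}_1}\|_{L^\infty(Q)}+\|\beta_{n,\overline{x}_1}\|_{L^p(Q)}\big),
\end{equation*}
with $C$ depending only on $p,Q',Q$, the ellipticity constants, and the modulus of continuity of $a_{n,\overline{x}_1}$ on $Q$; by the previous step, $C$ can be taken independent of $n$. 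Picking $p>m+2$ and applying the parabolic Morrey embedding $W^{1,2,p}(Q')\hookrightarrow C^{1+\alpha}(Q')$ with $\alpha=1-(m+2)/p$, I obtain a uniform $C^{1+\alpha}(Q')$ bound on $\{v_{n,\overline{x}_1}\}_{n\ge n_0}$. By Proposition~\ref{prop:2015-06-21:02}-\emph{(\ref{2015-09-29:02})}, $v_{n,\overline{x}_1}\to v_{\overline{x}_1}$ uniformly on $Q'$. Arzel\`a-Ascoli applied to $\{D_{x_0}v_{n,\overline{x}_1}\}$ yields a subsequence converging uniformly on $Q'$ to some continuous $g$; uniform convergence of both $v_{n,\overline{x}_1}$ and its gradient identifies $g=D_{x_0}v_{\overline{x}_1}$. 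The parabolic $C^{1+\alpha}$ seminorm is lower semicontinuous under such convergence, so $v_{\overline{x}_1}\in C^{1+\alpha}(Q')$. Since $Q'$ is arbitrary and $\alpha$ can be pushed arbitrarily close to $1$ by taking $p$ large, $v_{\overline{x}_1}\in C^{1+\alpha}_{\mathrm{loc}}((0,T)\times\mathbb{R}^m)$ for every $\alpha\in(0,1)$.

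\textbf{Main obstacle.} The delicate point is ensuring uniformity in $n$ of every estimate. The danger is that $\sigma_n$, produced by an $n$-dependent mollification--projection, could have $|\cdot|_H$-Lipschitz constants blowing up with $n$. What saves the argument is that the parabolic $L^p$ theory requires only a uniform modulus of continuity of the diffusion matrix, not a uniform Lipschitz constant, and this equicontinuity comes for free from the uniform-on-compacts convergence $\sigma_n\to\sigma$ of Proposition~\ref{ss:2015-06-20:00} combined with continuity of the limit. The same convergence, paired with the non-degeneracy hypothesis on $\sigma$, provides the uniform ellipticity needed on the finite-dimensional slice.
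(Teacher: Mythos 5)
Your proposal is correct and follows essentially the same route as the paper's proof: localize on a cylinder where the non-degeneracy hypothesis together with the uniform-on-compacts convergence $\sigma_n\to\sigma$ yields uniform ellipticity and equi-uniform continuity of $a_{n,\overline x_1}$, invoke the interior $W^{1,2,p}$ estimates for ($L^p$-)viscosity solutions of \eqref{eq:2015-06-23:07} with constants independent of $n$ (using Proposition \ref{prop:2015-06-21:02} and \eqref{eq:2015-06-23:0211} for the right-hand side), embed into $C^{1+\alpha}$, and pass the uniform bound to the limit $v_{\overline x_1}$ via the locally uniform convergence $u_n\to u$. The only cosmetic difference is that you spell out the limit step with Arzel\`a--Ascoli and lower semicontinuity of the seminorm, where the paper simply observes that the uniform $C^{1+\alpha}(Q')$ estimate is inherited by the uniform limit.
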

\begin{proof}
Let $(t,x_0)\in (0,T)\times \mathbb{R}^m$. Let $Q\coloneqq [c,d)\times B(x_0,\epsilon)$ be a neighborhood of $(t,x_0)$ in $(0,T)\times \mathbb{R}^m$ such that, for some $M>0$ and $\delta>0$, $\delta<a_{\overline x_1}(s,y)\coloneqq \sigma(s,(y,\overline x_1))\sigma^*(s,(y,\overline x_1))<M$ for all 
$(s,y)\in Q$.
Since $\Sigma_n(s,(y,\overline x_1))z=(\sigma_n(s,(y,\overline x_1))z,0_1)$ and $\{\sigma_n\}_{n\in \mathbb{N}}$ converges to $\sigma$ uniformly on compact sets (Remark \ref{2015-09-30:00}), we can suppose that $\delta<a_{n,\overline x_1}(s,y)<M$ for all $n\in \mathbb{N}$ and $(s,y)\in Q$ and that the family  $\{a_{n,\overline x_1}\}_{n\in \mathbb{N}}$ 
is equi-uniformly continuous.

By Proposition \ref{propp:2013-01-13-ab}, for $n\geq 1$, $v_{n,\overline x_1}$ is a viscosity solution of \eqref{eq:2015-06-18:01}, in particular
it is a viscosity solution of the terminal boundary  value problem
  \begin{equation} \label{eq:2015-06-23:07}
    \begin{dcases}
                -v_t(s,y)-\frac{1}{2}\operatorname{Tr}(a_{n,\overline x_1}(s,y)D^2_{y}v(s,y))
-\beta_{n,\overline x_1}(s,y)=0
          & (s,y)\in Q\\
          v(s,y)=u_n(s,(y,\overline x_1))& (s,y)\in \partial_PQ
        \end{dcases}
      \end{equation}
Thus, for instance by \cite[Lemma~2.9, Proposition 2.10, and Theorem 9.1]{Crandall2000}, $v_{n,\overline x_1}$ is the unique viscosity
solution (in particular also a unique $\leb^p$-viscosity solution\footnote{See \cite{Crandall2000} for the definition of $L^p$-viscosity solution.}) 
of \eqref{eq:2015-06-23:07}, and
\begin{equation}
  \label{eq:2015-06-23:10}
  |v_{n,\overline x_1}|_{W^{1,2,p}(Q')}\leq C\left(\sup_{(s,y)\in Q}|u_n(s,(y,\overline x_1))|+\sup_{(s,y)\in Q}|\beta_n(s,(y,\overline x_1))|\right)
\end{equation}
for all $m+1\leq p<+\infty$ and for all $Q'=[c',d')\times B(x,\epsilon')$, with $c<c'<d'<d$ and $0<\epsilon'<\epsilon$, and where $C$ depends only on $m$,  
$p$,  $\delta$, $M$,  $Q$, $Q'$, and the uniform modulus of continuity of the functions $a_{n,\overline x_1}$. Thus, by 
Proposition \ref{prop:2015-06-21:02} and \eqref{eq:2015-06-23:0211}, 
the set $\{v_{n,\overline x_1}\}_{n\geq 1}$ is uniformly bounded in $W^{1,2,p}(Q')$. Therefore applying an embedding theorem, 
see e.g.\ 
\cite[Lemma~3.3, p.\ 80]{Ladyvzenskaja1968}, we obtain that for every $\alpha\in(0,1)$
\[
|v_{n,\overline x_1}|_{C^{1+\alpha}(Q')}\leq C_\alpha
\]
for some constant $C_\alpha$ independent of $n$. Since the sequence $\{v_{n,\overline x_1}\}_{n\geq 1}$ converges uniformly 
on compact sets to
the function $v_{\overline x_1}$ as $n\to+\infty$,
it follows that the function $v_{\overline x_1}$ satisfies the above estimate too.
This completes the proof.
\end{proof}

\bibliographystyle{plain}
\bibliography{RoSw_2018-06-20_arxiv.bbl}
%\bibliography{biblio.bib}

\end{document}